\documentclass{tran-l}

\usepackage{amsmath,amsfonts,amssymb,amsthm,amscd,epsfig,comment,color,euscript, mathrsfs}
\usepackage[all]{xy}

\newcommand\ot{\otimes}
\newcommand\gq{/\!/} 

\newcommand\C{\mathbb{C}}

\newcommand\Z{\mathbb{Z}}

\newcommand\bv{\mathbf{v}}

\newcommand\bx{\mathbf{x}}

\newcommand\inv{^{-1}}
\newcommand\calP{\mathcal{P}}

\newcommand\NN{\Bbb N}
\newcommand\ZZ{\Bbb Z}


\newcommand\scS{\mathcal{S}}


\newcommand{\On}{\EuScript{O}}

\newcommand\g{\ensuremath{\mathfrak{g}}}
\newcommand\frg{\g}
\newcommand\lsl{\ensuremath{\mathfrak{sl}}}

\newcommand\frK{\mathfrak{K}}
\newcommand\M{\mathfrak{M}} \newcommand\frM{\mathfrak{M}}
\newcommand\fm{\mathfrak{m}}

\newcommand\frs{\mathfrak{s}}

\newcommand\frz{\mathfrak{z}}
\newcommand\bpi{{\mbox {\boldmath $\pi$}}}

\newcommand\frn{\mathfrak{n}}
\newcommand\frp{\mathfrak{p}}

\newcommand\id{\mathrm{id}}

\newcommand\rat{\mathrm{rat}}

\DeclareMathOperator{\Hom}{Hom}

\DeclareMathOperator{\End}{End}
\DeclareMathOperator{\Aut}{Aut}

\DeclareMathOperator{\ad}{ad}

\DeclareMathOperator{\Span}{Span}

\DeclareMathOperator{\Spec}{Spec}
\DeclareMathOperator{\Id}{Id}

\DeclareMathOperator{\ev}{ev}
\DeclareMathOperator{\Res}{Res}
\DeclareMathOperator{\supp}{supp}
\DeclareMathOperator{\rmU}{U}
\DeclareMathOperator{\Ker}{Ker}
\DeclareMathOperator{\rad}{rad}
\DeclareMathOperator{\Ann}{Ann}

\newtheorem{theo}{Theorem}[section]
\newtheorem{prop}[theo]{Proposition}
\newtheorem{lem}[theo]{Lemma}
\newtheorem{cor}[theo]{Corollary}

\theoremstyle{definition}
\newtheorem{defin}[theo]{Definition}
\newtheorem{example}[theo]{Example}

\newtheorem*{rem*}{Remark}
\newtheorem{rem}[theo]{Remark}

\newcommand{\lie}[1]{\mathfrak{#1}}

\numberwithin{equation}{section} \allowdisplaybreaks


\newcommand{\la}{\lambda}
\newcommand{\si}{\sigma}
\newcommand{\al}{\alpha}


\begin{document}
\title[Representations of equivariant map algebras]{Irreducible finite-dimensional representations\\ of equivariant map algebras}
\author{Erhard Neher}
\address{E.~Neher: University of Ottawa, Ottawa, Ontario, Canada}
\email{erhard.neher@uottawa.ca}
\author{Alistair Savage}
\address{A.~Savage: University of Ottawa, Ottawa, Ontario, Canada}
\email{alistair.savage@uottawa.ca}
\author{Prasad Senesi}
\address{P.~Senesi: The Catholic University of America, Washington, D.C.}
\email{senesi@cua.edu}

\subjclass[2010]{17B10, 17B20, 17B65}
\date{April 12, 2010}

\begin{abstract}
Suppose a finite group acts on a scheme $X$ and a finite-dimensional
Lie algebra $\g$.  The corresponding \emph{equivariant map algebra}
is the Lie algebra $\frM$ of equivariant regular maps from $X$ to
$\g$.  We classify the irreducible finite-dimensional
representations of these algebras. In particular, we show that all
such representations are tensor products of evaluation
representations and one-dimensional representations, and we
establish conditions ensuring that they are all evaluation
representations. For example, this is always the case if $\frM$ is
perfect.

Our results can be applied to multiloop algebras, current algebras,
the Onsager algebra, and the tetrahedron algebra.  Doing so, we
easily recover the known classifications of irreducible
finite-dimensional representations of these algebras. Moreover, we
obtain previously unknown classifications of irreducible
finite-dimensional representations of other types of equivariant map
algebras, such as the generalized Onsager algebra.
\end{abstract}

\thanks{The first and second authors gratefully acknowledge support
from NSERC through their respective Discovery grants.  The third author was partially supported by the Discovery grants of the first
two authors.}

\maketitle \thispagestyle{empty}




\section*{Introduction}

When studying the category of representations of a (possibly
infinite-dimensional) Lie algebra, the irreducible
finite-dimensional representations often play an important role. Let
$X$ be a scheme and let $\g$ be a finite-dimensional Lie algebra,
both defined over an algebraically closed field $k$ of
characteristic zero and both equipped with the action of a finite
group $\Gamma$ by automorphisms. The \emph{equivariant map algebra}
$\frM=M(X,\g)^\Gamma$ is the Lie algebra of regular maps $X\to \g$
which are equivariant with respect to the action of $\Gamma$.
Denoting by $A$ the coordinate ring of $X$, an equivariant map
algebra can also be realized as the fixed point Lie algebra
$\frM=(\g \ot A)^\Gamma$ with respect to the diagonal action of
$\Gamma$ on $\g \otimes A$. The purpose of the current paper is to
classify the irreducible finite-dimensional representations of such
algebras.

One important class of examples of equivariant map algebras are the
(twisted) loop algebras which play a crucial role in the theory of
affine Lie algebras.  The description of the irreducible
finite-dimensional representations of loop algebras goes back to the
work of Chari and Pressley \cite{C}, \cite{CPunitary}, \cite{CPtw}.
Their work has had a long-lasting impact. Generalizations and more
precise descriptions of their work have appeared in many papers, for
example in Batra \cite{Bat}, Chari-Fourier-Khandai \cite{CFK},
Chari-Fourier-Senesi \cite{CFS}, Chari-Moura \cite{CM},
Feigin-Loktev \cite{FL}, Lau \cite{Lau}, Li \cite{Li}, and Rao
\cite{R,R2}.  Other examples of equivariant map algebras whose
irreducible finite-dimensional representations have been classified
are the Onsager algebra \cite{DR} and the tetrahedron algebra (or
three-point $\mathfrak{sl}_2$ loop algebra) \cite{Ha}. In all these
papers it was proven, sometimes using complicated combinatorial or
algebraic arguments and sometimes without explicitly stating so,
that all irreducible finite-dimensional representations are
evaluation representations.

In the current paper, we provide a complete classification of the
irreducible finite-dimensional representations of an arbitrary
equivariant map algebra.  This class of Lie algebras includes all
the aforementioned examples, and we obtain classification results in
these cases with greatly simplified proofs.  However, the class of
Lie algebras covered by the results of this paper is infinitely
larger than this set of examples.  To demonstrate this, we work out
some previously unknown classifications of the irreducible
finite-dimensional representations of other Lie algebras such as the
generalized Onsager algebra.

If $\frM=M(X,\g)^\Gamma$ is an equivariant map algebra, $\al \in
\frM$, and $x$ is a $k$-rational point of $X$, the image $\al(x)$ is
not an arbitrary element of $\g$, but rather an element of $\g^x =
\{ u \in \g: g\cdot u = u \mbox{ for all }g\in \Gamma_x\}$ where
$\Gamma_x= \{g \in \Gamma : g \cdot x = x\}$. For a finite subset
$\bx$ of $k$-rational points of $X$ and finite-dimensional
representations $\rho_x : \g^x \to \End_k V_x$, $x \in \bx$, we
define the associated \emph{evaluation representation} as the
composition
\[ \textstyle
    \frM \stackrel{\ev_\bx}{\longrightarrow} \bigoplus_{x \in \bx}
    \g^x \xrightarrow{\otimes_{x \in \bx} \rho_x}
    \End_k (\bigotimes_{x \in \bx} V_x),
\]
where $\ev_\bx : \alpha \mapsto (\alpha(x))_{x \in \bx}$ is
evaluation at $\bx$. Our definition is slightly more general than
the classical definition of evaluation representations, which
require the $V_x$ to be representations of $\g$ instead of $\g^x$.
We believe our definition to be more natural, and it leads to a
simplification of the classification of irreducible
finite-dimensional representations in certain cases.  For instance,
the Onsager algebra is an equivariant map algebra where $X=\Spec
k[t^{\pm 1}]$, $\g=\lsl_2(k)$, and $\Gamma=\{1, \si\}$, with $\si$
acting on $X$ by $\si\cdot x = x^{-1}$ and on $\g$ by the Chevalley
involution.  For the fixed points $x={\pm 1}\in X$, the subalgebras
$\g^x$ are one-dimensional, in fact Cartan subalgebras, while
$\g^x=\g$ for $x\ne \pm 1$.  In the classification of the
irreducible finite-dimensional representations of the Onsager
algebra given in \cite{DR}, not all irreducible finite-dimensional
representations are evaluation representations since the more
restrictive definition is used. Instead, a discussion of \emph{type}
is needed to reduce all irreducible finite-dimensional
representations (via an automorphism of the enveloping algebra) to
evaluation representations.  However, under the more general
definition of evaluation representation given in the current paper,
all irreducible finite-dimensional representations of the Onsager
algebra are evaluation representations and no discussion of type is
needed. Additionally, contrary to what has been imposed before (for
example in the multiloop case), we allow the representations
$\rho_x$ to be non-faithful. This will provide us with greater
flexibility.  Finally, we note that we use the term \emph{evaluation
representation} even when we evaluate at more than one point (i.e.
when $|\bx| > 1$). Such representations are often called tensor
products of evaluation representations in the literature, where
evaluation representations are at a single point.

By the above, the main question becomes: When does an irreducible
finite-dimensional representation of $\frM$ factor through an
evaluation map $\ev_\bx$? Surprisingly, the answer is not always. In particular, the Lie algebra may have
one-dimensional representations that are not evaluation
representations.  Any one-dimensional representation corresponds to
a linear form $\la\in \frM^*$ vanishing on $[\frM, \frM]$.  In some
cases, all such linear forms are evaluation representations, but in
other cases this is not true. Our main result
(Theorem~\ref{thm:fd-reps=eval-reps+linear-form}) is that \emph{any
irreducible finite-dimensional representation of $\frM$ is a tensor
product of an evaluation representation and a one-dimensional
representation.}

Obviously, if $\frM=[\frM, \frM]$ is perfect, then our theorem
implies that every irreducible finite-dimensional representation of
$\frM$ is an evaluation representation. For example, this is so in
the case of a multiloop algebra which is an equivariant map algebra
$\frM=M(X,\g)^\Gamma$ for $X= \Spec k[t^{\pm 1}_1, \ldots, t_n^{\pm
1}]$, $\g$ simple, and $\Gamma=\ZZ/(m_1\ZZ) \times \cdots \times
\ZZ/(m_n\ZZ)$ with the $i$-th factor of $\Gamma$ acting on the
$i$-th coordinate of $X$ by a primitive $m_i$-th root of unity. Note
that in this case $\Gamma$ acts freely on $X$ and therefore all
$\g^x=\g$. But perfectness of $\frM$ is by no means necessary for
all irreducible finite-dimensional representations to be evaluation
representations. In fact, in Section~\ref{sec:classification} we
provide an easy criterion for this to be the case, which in
particular can be applied to the Onsager algebra to see that all
irreducible finite-dimensional representations are evaluation
representations in the more general sense. On the other hand, we
also provide conditions under which not all irreducible
finite-dimensional representations of an equivariant map algebra are
evaluation representations (see
Proposition~\ref{prop:infinite-tildeX}).

An important feature of our classification is the fact that
isomorphism classes of evaluation representations are parameterized
in a natural and uniform fashion.  Specifically, for a rational
point $x \in X_\rat$, let $\mathcal{R}_x$ be the set of isomorphism
classes of irreducible finite-dimensional representations of $\g^x$
and set $\mathcal{R}_X = \bigsqcup_{x \in X_\rat} \mathcal{R}_x$.
Then there is a canonical $\Gamma$-action on $\mathcal{R}_X$, and
isomorphism classes of evaluation representations are naturally
enumerated by finitely-supported $\Gamma$-equivariant functions
$\Psi : X_\rat \to \mathcal{R}_X$ such that $\Psi(x) \in
\mathcal{R}_x$.  Thus we see that evaluation representations ``live
on orbits''.  This point of view gives a natural geometric
explanation for the somewhat technical algebraic conditions that
appear in previous classifications (such as for the multiloop
algebras).

We shortly describe the contents of the paper. After a review of
some results in the representation theory of Lie algebras in
Section~\ref{sec:review}, we introduce map algebras ($\Gamma=\{1\}$)
and equivariant map algebras (arbitrary $\Gamma$) in
Sections~\ref{sec:mapal} and~\ref{sec:equiva} and discuss old and
new examples. We introduce the formalism of evaluation
representations in Section~\ref{sec:eval-reps} and classify the
irreducible finite-dimensional representations of equivariant map
algebras in Section~\ref{sec:classification}. We show that in
general not all irreducible finite-dimensional representations are
evaluation representations, and we derive a sufficient criterion for
this to nevertheless be the case, as well as a necessary condition.
Finally, in Section~\ref{sec:applications} we apply our general
theorem to some specific cases of equivariant map algebras,
recovering previous results as well as obtaining new ones.

\subsection*{Notation} Throughout this paper, $k$ is an algebraically
closed field of characteristic zero.  For schemes, we use the
terminology of \cite{EH}.  In particular, an affine scheme $X$ is
the (prime) spectrum of a commutative associative $k$-algebra $A$.
Note that we do \emph{not} assume that $A$ is finitely generated in
general.  We say that $X$ is an affine variety if $A$ is finitely
generated and reduced, in which case we identify $X$ with the
maximal spectrum of $A$.  For an arbitrary scheme $X$, we set $A =
\mathcal{O}_X(X)$, except when the possibility of confusion exists
(for instance, when more than one scheme is being considered), in
which case we use the notation $A_X$.  We let $X_\rat$ denote the
set of $k$-rational points of $X$. Recall (\cite[p.~45]{EH}) that $x
\in X$ is a \emph{$k$-rational point of $X$} if its residue field is
$k$. The symbol $\g$ will always denote a finite-dimensional Lie
algebra (but see Remark~\ref{rem:generalization}). All tensor
products will be over $k$, unless indicated otherwise. If a group
$\Gamma$ acts on a vector space $V$ we denote by $V^\Gamma=\{v\in V
: g\cdot v = v \mbox{ for all }g\in \Gamma\}$ the subspace of fixed
points.


\section{Review of some results on representations of Lie algebras}
\label{sec:review}

For easier reference we review some mostly known results about
representations of Lie algebras. Let $L$ be a Lie algebra, not
necessarily of finite dimension. The first items concern
representations of the Lie algebra
$$L= L_1 \oplus \cdots \oplus L_n,$$
where $L_1, \ldots , L_n$ are ideals of $L$. Recall that if $V_i$,
$i=1, \ldots, n$, are $L_i$-modules, then $V_1 \otimes \dots \otimes
V_n$ is an $L$-module with action
\[
    (u_1, \dots , u_n) \cdot (v_1 \otimes  \dots \otimes  v_n)
    = \sum_{i=1}^n (v_1 \otimes
    \dots \otimes  v_{i-1} \otimes u_i \cdot v_i \otimes v_{i+1}
         \otimes \dots \otimes v_n).
\]
We will always use this $L$-module structure on $V=V_1 \otimes
\cdots \otimes V_n$. It will sometimes be useful to denote this
representation by $\rho_1 \otimes \cdots \otimes \rho_n$, where
$\rho_i : L_i \to \End_k(V_i)$ are the individual representations.
We call an irreducible  $L$-module $V$ \emph{absolutely irreducible}
if $\End_L (V) = k \Id$. By Schur's Lemma, an irreducible $L$-module
of countable dimension is absolutely irreducible.

We collect in the following proposition various well-known facts
that we will need in the current paper.

\begin{prop} \label{prop:basic-facts} \mbox{}
\begin{enumerate}
\item \label{item:tensor-prods}
Let $V_i$, $1\le i \le n$, be non-zero $L_i$-modules and let $V= V_1
\otimes \cdots \otimes V_n$.  Then $V$ is completely reducible
(respectively absolutely irreducible) if and only if all $V_i$ are
completely reducible (respectively absolutely irreducible). {\rm For
a proof, see for example \cite[\S7.4, Theorem~2]{Bo:A8} and use the
fact that the universal enveloping algebra of $L$ is $\rmU(L) =
\rmU(L_1) \otimes \cdots \otimes \rmU(L_n)$, or see
\cite[Lemma~2.7]{Li}.}

\item \label{item:n=2-case} Let $n=2$; thus $L = L_1 \oplus L_2$, and let $V$ be
an irreducible $L$-module. Suppose that $V$, considered as an
$L_1$-module, contains an absolutely irreducible $L_1$-submodule
$V_1$. Then there exists an irreducible $L_2$-module $V_2$ such that
$V\cong V_1 \otimes V_2$. The isomorphism classes of $V_1$ and $V_2$
are uniquely determined by $V$. {\rm For a proof in the case $\dim_k V <
\infty$ see, for example, \cite[\S7.7, Proposition~8]{Bo:A8} and
observe that irreducibility of $V_2$ follows from
\eqref{item:tensor-prods}. For a proof in general see
\cite[Lemma~2.7]{Li}.}

\item Suppose $L_2, \ldots, L_n$ are finite-dimensional
semisimple Lie algebras (note that there are no assumptions on
$L_1$) and let $V$ be an irreducible finite-dimensional $L$-module.
Then there exist irreducible finite-dimensional $L_i$-modules $V_i$,
$1\le i \le n$, such that $V\cong V_1 \otimes \cdots \otimes V_n$.
The isomorphism classes of the $L_i$-modules $V_i$ are uniquely
determined by $V$. {\rm This follows from \eqref{item:n=2-case} by
induction.}

\item \label{item:faithful-reps}
Let $L$ be a finite-dimensional Lie algebra over $k$. Then:

\begin{enumerate}
\item $L$ has a finite-dimensional faithful completely reducible
representation if and only if $L$ is reductive {\rm (\cite[\S6.4,
Proposition~5]{Bou:Lie1})}.

\item $L$ has a finite-dimensional faithful irreducible
representation if and only if $L$ is reductive and its center is at
most one-dimensional {\rm (\cite[\S6, Exercise~20]{Bou:Lie1})}.
\end{enumerate}

\item \label{item:one-dim-reps} We denote by $L^* = \Hom_k(L,k)$
the dual space of $L$ and identify $(L/[L,L])^*= \{ \la\in L^*:
\la([L,L])=0\}$. Any $\la \in (L/[L, L])^*$ is a one-dimensional,
hence irreducible, representation of the Lie algebra $L$, and every
one-dimensional representation of $L$ is of this form. Two linear
forms $\la, \mu \in (L/[L,L])^*$ are isomorphic as representations
if and only if $\la=\mu$.  {\rm The proof of these facts is
immediate.}
\end{enumerate}
\end{prop}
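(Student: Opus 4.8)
My plan is to note that, as the authors indicate, items~\eqref{item:tensor-prods}--\eqref{item:faithful-reps} are reformulations of standard results (Bourbaki, \cite{Li}) and require only that one match hypotheses; I would then give the elementary argument for the last item~\eqref{item:one-dim-reps} in full.

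For~\eqref{item:tensor-prods} the point is that the PBW theorem gives an algebra isomorphism $\rmU(L) \cong \rmU(L_1) \otimes \cdots \otimes \rmU(L_n)$, under which $V_1 \otimes \cdots \otimes V_n$ is precisely a module over this tensor product of associative algebras; complete reducibility (resp.\ absolute irreducibility) of a tensor product of modules over a tensor product of algebras is equivalent to the corresponding property for each factor, which is the associative statement of \cite[\S7.4, Theorem~2]{Bo:A8}. For~\eqref{item:n=2-case} one sets $V_2 := \Hom_{L_1}(V_1, V)$ with the $L_2$-action inherited from $V$ (well defined since $[L_1, L_2] = 0$), and checks that the evaluation map $V_1 \otimes V_2 \to V$ is a morphism of $L$-modules which is onto because $V$ is irreducible and $V_1 \ne 0$, and injective by the standard description of the $V_1$-isotypic component (using $\End_{L_1}(V_1) = k\,\Id$); irreducibility of $V_2$ and uniqueness of the isomorphism classes then follow from~\eqref{item:tensor-prods}, and in any case one may simply cite \cite[\S7.7, Proposition~8]{Bo:A8}. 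Item~(3) is then induction on $n$: since $L_n$ is semisimple and $k$ is algebraically closed, $V$ restricted to $L_n$ is a direct sum of absolutely irreducible modules, so it contains an absolutely irreducible $L_n$-submodule $V_n$; applying~\eqref{item:n=2-case} with $(L_1, L_2)$ replaced by $(L_n,\ L_1 \oplus \cdots \oplus L_{n-1})$ yields $V \cong V_n \otimes W$ with $W$ an irreducible finite-dimensional $(L_1 \oplus \cdots \oplus L_{n-1})$-module, and one recurses on $W$. Finally,~\eqref{item:faithful-reps}(a) and~(b) are exactly \cite[\S6.4, Proposition~5]{Bou:Lie1} and \cite[\S6, Exercise~20]{Bou:Lie1}.

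The only genuine verification is~\eqref{item:one-dim-reps}. The identification $(L/[L,L])^* = \{\la \in L^* : \la([L,L]) = 0\}$ is the universal property of the quotient vector space. If $\la$ belongs to this set, define $\rho_\la \colon L \to \gl_1(k) = k$ by $\rho_\la(u) = \la(u)$; since $\gl_1(k)$ is abelian we get $\rho_\la([u,v]) = \la([u,v]) = 0 = [\rho_\la(u), \rho_\la(v)]$, so $\rho_\la$ is a representation, and it is irreducible because it is one-dimensional. Conversely, a one-dimensional representation is, upon identifying $\End_k(V) \cong k$ for $\dim_k V = 1$, a Lie algebra homomorphism $L \to \gl_1(k)$; as the target is abelian it annihilates $[L,L]$ and hence equals $\rho_\la$ for the unique $\la \in (L/[L,L])^*$ it induces. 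Lastly, an isomorphism from $\rho_\la$ to $\rho_\mu$ is a nonzero scalar $c \in k$ with $c\,\la(u) = \mu(u)\,c$ for every $u \in L$, which forces $\la = \mu$; conversely, if $\la = \mu$ then the identity is an isomorphism.

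I do not anticipate any real obstacle: every assertion is either a pointer to the literature or a one-line check. The only thing worth being careful about is internal consistency --- that the notion of absolute irreducibility is available exactly where~\eqref{item:n=2-case} and item~(3) invoke it --- and this is ensured by the form of Schur's lemma recalled immediately before the proposition, namely that an irreducible module of countable (in particular finite) dimension over a $k$-algebra, with $k$ algebraically closed, has endomorphism ring $k\,\Id$.
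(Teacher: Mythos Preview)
Your proposal is correct and matches the paper's approach exactly: the paper gives no separate proof for this proposition, instead embedding the same citations (Bourbaki, \cite{Li}) in the statement itself and remarking that item~(3) follows from~\eqref{item:n=2-case} by induction and that~\eqref{item:one-dim-reps} is immediate. If anything, you supply more detail than the paper does, and all of it is sound.
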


We combine some of these facts in the following lemma.

\begin{lem}\label{lem:deco} Let $L$ be a (possibly infinite-dimensional)
Lie algebra and let $\rho : L \to \End_k(V)$ be an irreducible
finite-dimensional representation. Then there exist unique
irreducible finite-dimensional representations $\rho_i : L \to
\End_k(V_i)$, $i=1,2$, such that $\rho = \rho_1 \otimes \rho_2$ and
\begin{enumerate}

\item $\rho_1 = \la$ for a unique $\la \in (L/[L,L])^*$ as in
Proposition~\ref{prop:basic-facts}\eqref{item:one-dim-reps} (hence
$V=V_2$ as $k$-vector spaces) and

\item $L/\Ker \rho_2$ is finite-dimensional semisimple.
\end{enumerate}
\end{lem}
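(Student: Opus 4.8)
The plan is to reduce everything to the structure theory of finite-dimensional reductive Lie algebras. Put $I=\Ker\rho$. Since $\dim_k V<\infty$, the image $\rho(L)\subseteq\End_k(V)$ is finite-dimensional, so $\bar L:=L/I$ is a finite-dimensional Lie algebra on which $\rho$ induces a faithful irreducible finite-dimensional representation $\bar\rho$. By Proposition~\ref{prop:basic-facts}\eqref{item:faithful-reps}, $\bar L$ is reductive and $\dim_k\frz(\bar L)\le 1$, so $\bar L=\frz\oplus\frs$ with $\frz=\frz(\bar L)$ abelian of dimension $\le 1$ and $\frs=[\bar L,\bar L]$ finite-dimensional semisimple.

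For existence, I would apply the third part of Proposition~\ref{prop:basic-facts} to $\bar L=\frz\oplus\frs$ (with the semisimple summand in the role of $L_2$): this yields an isomorphism $V\cong V_1\ot V_2$ of $\bar L$-modules in which $V_1$ is an irreducible finite-dimensional $\frz$-module and $V_2$ an irreducible finite-dimensional $\frs$-module, each unique up to isomorphism. As $k$ is algebraically closed and $\frz$ is abelian, $\dim_k V_1=1$. Now define $\rho_1$ (resp.\ $\rho_2$) as the composite of $L\twoheadrightarrow\bar L$ with the projection $\bar L\to\frz$ (resp.\ $\bar L\to\frs$) followed by the action on $V_1$ (resp.\ $V_2$). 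Since $\rho_1$ factors through the abelian algebra $\frz$, it kills $[L,L]$, so $\rho_1=\la$ for a unique $\la\in(L/[L,L])^*$; since $\Ker\rho_2\supseteq I$ and $L/\Ker\rho_2$ is the quotient of the semisimple algebra $\frs$ by an ideal, $L/\Ker\rho_2$ is finite-dimensional semisimple; and unwinding the tensor-product module structure on $V_1\ot V_2$ for the direct sum $\frz\oplus\frs$ and pulling back along $L\twoheadrightarrow\bar L$ shows exactly that $\rho=\rho_1\ot\rho_2$.

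For uniqueness, suppose $\rho=\rho_1'\ot\rho_2'$ is any decomposition with $\rho_1'=\la'$ one-dimensional and $L/\Ker\rho_2'$ finite-dimensional semisimple. Identifying $V$ with the underlying space of $V_2'$, we get $\rho(u)=\la'(u)\Id_V+\rho_2'(u)$ for all $u\in L$. Restricting to $[L,L]$, where $\la'$ vanishes, gives $\rho_2'|_{[L,L]}=\rho|_{[L,L]}$; but semisimplicity of $L/\Ker\rho_2'$ forces $\rho_2'(L)=\rho_2'([L,L])$, so $\rho_2'$ is determined by its restriction to $[L,L]$, hence by $\rho$ alone, and then $\la'(u)\Id_V=\rho(u)-\rho_2'(u)$ shows $\la'=\la$ is determined by $\rho$ as well. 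The corresponding uniqueness of $\rho_1,\rho_2$ from the construction follows identically. I expect the only real bookkeeping point — not a genuine obstacle — to be verifying that the tensor-product $\bar L$-module structure on $V_1\ot V_2$, pulled back to $L$, reproduces $\rho$ on the nose; everything else is a direct assembly of the facts collected in Proposition~\ref{prop:basic-facts}.
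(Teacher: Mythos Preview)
Your argument is correct and follows essentially the same route as the paper: factor through the finite-dimensional reductive quotient $\bar L=L/\Ker\rho$, split it as centre plus semisimple part, and read off $\rho_1,\rho_2$ from the two projections. The paper reaches the same decomposition by invoking Burnside's theorem to see directly that $\bar\rho(\bar L_\frz)=k\Id_V$, whereas you package this step as an application of Proposition~\ref{prop:basic-facts}(3); the content is the same.

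One small imprecision in your uniqueness paragraph: from ``$\rho_2'(L)=\rho_2'([L,L])$'' you conclude ``$\rho_2'$ is determined by its restriction to $[L,L]$'', but equality of images does not by itself determine the map on all of $L$. The quick repair, using what you have already written, is to note that since $\rho_2'(L)$ is semisimple it lies in $\lsl(V)$, so the equation $\rho(u)=\la'(u)\Id_V+\rho_2'(u)$ is precisely the scalar-plus-traceless decomposition in $\gl(V)$; hence $\la'(u)=(\dim V)^{-1}\tr\rho(u)$ and $\rho_2'(u)=\rho(u)-\la'(u)\Id_V$ are both uniquely determined by $\rho$.
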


\begin{proof} The representation $\rho$ factors  as
\[ \xymatrix@!C{
  L \ar@{->>}[dr]_\pi \ar[rr]^{\rho} & & \End_k(V)\\
    & \bar L \ar@{^(->}[ur]_{\overline{\rho}}
}\] where $\bar L = L /\Ker \rho$, $\pi$ is the canonical
epimorphism and $\bar \rho$ is a faithful representation of $\bar
L$, whence $\bar L$ is finite-dimensional. By
Proposition~\ref{prop:basic-facts}\eqref{item:faithful-reps}, $\bar
L$ is reductive. If $\bar L$ is semisimple, let $\la = 0$ and
$\rho=\rho_2$. Otherwise, by
Proposition~\ref{prop:basic-facts}\eqref{item:faithful-reps} again,
$\bar L$ has a one-dimensional center $\bar L_\frz$. Let $\bar
L_\frs = [\bar L, \, \bar L]$ be the semisimple part of $\bar L$.
Since $[\bar \rho(\bar L_\frz), \, \bar \rho (\bar L_\frs)] = 0$, it
follows that ${\bar \rho}|_{{\bar L}_\frs}$ is irreducible. By
Burnside's Theorem (see for example \cite[Chapter 4.3]{BAII}) the
associative subalgebra of $\End_k (V)$ generated by $\bar \rho (\bar
L_\frs)$ is therefore equal to $\End_k(V)$. Consequently, $\bar \rho
(\bar L_\frz) \subseteq k \Id_V$. Because $\bar \rho$ is faithful,
we actually have $\bar \rho(\bar L_\frz) = k \Id_V$. We identify
$k\Id_V \equiv k$ and then define $\rho_1 (x) = \bar \rho(\bar
x_\frz)$, where $\bar x_\frz$ is the $\bar L_\frz$-component of $\bar
x = \pi(x) \in \bar L$. By
Proposition~\ref{prop:basic-facts}\eqref{item:one-dim-reps}, $\rho_1
= \la$ for a unique $0 \ne \la\in (L/[L,L])^*$. Finally, we define
$\rho_2 : L \to \End_k(V)$ by $\rho_2(x) = \bar \rho(\bar x_\frs)$
where $\bar x_\frs$ is the $\bar L_\frs$-component of $\bar x$.
Since $\bar \rho |_{{\bar L}_\frs}$ is faithful, it follows that
$\Ker \rho_2 = \pi^{-1}(\bar L_\frz)$, whence $L/\Ker \rho_2 \cong
\bar L_\frs$ is semisimple.  This proves existence of the
decomposition. Uniqueness follows from the construction above.
\end{proof}


\section{Map algebras} \label{sec:mapal}

In this and the next section we define our main object of study --
the Lie algebra of (equivariant) maps from a scheme to another Lie
algebra -- and discuss several examples. We remind the reader that
$X$ is a scheme defined over $k$ and $\g$ is a finite-dimensional
Lie algebra over $k$.  Then $\g$ is naturally equipped with the
structure of an affine algebraic scheme, namely the affine
$n$-space, where $n=\dim \g$.  Addition and multiplication on $\g$
give rise to morphisms of schemes $\g \times_k \g \to \g$, and
multiplication by a fixed scalar yields a morphism of schemes $\g
\to \g$.

\begin{defin}[Map algebras] \label{Defmap}
We denote by $M(X,\g)$ the Lie algebra of regular functions on $X$
with values in $\g$ (equivalently, the set of morphisms of schemes
$X \to \g$), called the \emph{untwisted map algebra} or the
\emph{Lie algebra of currents} (\cite{FL}).  The multiplication in
$M(X,\g)$ is defined pointwise.  That is, for $\alpha, \beta \in
M(X,\g)$, we define $[\alpha, \beta] \in M(X,\g)$ to be the
composition
\[
  X \xrightarrow{(\alpha,\beta)} \g \times_k \g \xrightarrow{[ \cdot,
  \cdot ]} \g.
\]
The addition and scalar multiplication are defined similarly.
\end{defin}

\begin{lem} \label{lem:M-isom-O}
There is an isomorphism
$$M(X,\g) \cong \g \otimes A$$ of Lie algebras
over $A$ and hence also over $k$. The product on $\g \otimes A$ is
given by $\left[ u \otimes f, v \otimes g \right] = \left[ u,v
\right] \otimes fg$ for $u,v \in \g$ and $f,g \in A$.
\end{lem}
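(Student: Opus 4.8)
The plan is to establish the isomorphism $M(X,\g) \cong \g \otimes A$ by identifying both sides with an appropriate $\Hom$-space. First I would recall the fundamental fact (see, e.g., \cite[Theorem~I-40]{EH}) that for any scheme $X$ with coordinate ring $A = \mathcal{O}_X(X)$, the set of morphisms $X \to \mathbb{A}^n_k$ is in natural bijection with $\Hom_{k\text{-}\mathrm{alg}}(k[t_1, \dots, t_n], A) \cong A^n$. Since $\g$, as a scheme, is precisely the affine $n$-space $\mathbb{A}^n_k$ with $n = \dim_k \g$ (as discussed just before Definition~\ref{Defmap}), this gives a bijection of sets $M(X,\g) \cong \g \otimes_k A$, where a morphism $\alpha : X \to \g$ corresponds to the element $\sum_i u_i \otimes f_i$ once we fix a basis $(u_i)$ of $\g$ and write the $i$-th coordinate function of $\alpha$ as $f_i \in A$.

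Next I would check that this bijection is $A$-linear, i.e.\ that it intertwines the pointwise addition and pointwise scalar-by-$A$ multiplication on $M(X,\g)$ with the obvious $A$-module structure on $\g \otimes_k A$. This is essentially immediate from the construction of the addition and scalar multiplication morphisms on $\g$ described in the text (addition $\g \times_k \g \to \g$ and scalar multiplication $\g \to \g$), since under the $\Hom$-identification these correspond to the ring-theoretic operations on the coordinate functions. The only real content is bookkeeping with coordinates.

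The main step is verifying that the bracket is respected, i.e.\ that the pointwise Lie bracket on $M(X,\g)$ defined via $X \xrightarrow{(\alpha,\beta)} \g \times_k \g \xrightarrow{[\cdot,\cdot]} \g$ corresponds under the identification to the bracket $[u \otimes f, v \otimes g] = [u,v] \otimes fg$ on $\g \otimes_k A$. To see this, I would expand $\alpha = \sum_i u_i \otimes f_i$ and $\beta = \sum_j u_j \otimes g_j$ and use that the bracket morphism $[\cdot,\cdot] : \g \times_k \g \to \g$ is bilinear: in terms of structure constants $[u_i, u_j] = \sum_\ell c_{ij}^\ell u_\ell$, the composite morphism $X \to \g$ has $\ell$-th coordinate function $\sum_{i,j} c_{ij}^\ell f_i g_j$, which is exactly the $\ell$-th coordinate of $\sum_{i,j} f_i g_j [u_i, u_j] = \big(\sum_i u_i \otimes f_i\big) \bullet \big(\sum_j u_j \otimes g_j\big)$ under the product defined on $\g \otimes_k A$. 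This identifies the two brackets.

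Finally, once the isomorphism of Lie algebras over $A$ is established, the statement that it is also an isomorphism over $k$ is automatic by restriction of the base ring along $k \hookrightarrow A$. I expect the only mild obstacle to be a careful treatment of what ``the bracket morphism is bilinear'' means scheme-theoretically — i.e.\ that the morphism $[\cdot,\cdot] : \g \times_k \g \to \g$ is given in coordinates by the bilinear form with structure constants $c_{ij}^\ell$ — but since $\g$ is literally affine $n$-space and the Lie bracket on the vector space $\g$ is bilinear, the induced morphism of schemes is given by these polynomial (in fact bilinear) formulas, and no subtlety arises.
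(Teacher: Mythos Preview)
Your proposal is correct and is precisely the standard verification one would carry out. The paper does not actually give a proof of this lemma: immediately after the statement it simply says ``whose proof is routine'' and moves on. Your argument via \cite[Theorem~I-40]{EH}, identifying $M(X,\g)$ with $\Hom_{k\text{-alg}}(k[t_1,\dots,t_n],A) \cong A^n \cong \g \otimes_k A$ and then checking compatibility with the bracket through the structure constants, is exactly the routine computation the authors have in mind (indeed, a suppressed comment in the paper's source sketches the same identification using the same reference).
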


Because of Lemma~\ref{lem:M-isom-O}, whose proof is routine, we will
sometimes identify $M(X,\g)$ and $\g \otimes A$ in what follows.

\begin{rem}\label{categ} If $\phi : X \to Y$ is a morphism of
schemes, then $\phi^* : M(Y,\g) \to M(X,\g)$ given by $\phi^*(\alpha)
= \alpha \circ \phi$ is a Lie algebra homomorphism. For example, if
$\iota: X \hookrightarrow Y$ is an inclusion of schemes, then
$\iota^*: M(Y, \g) \rightarrow M(X, \g)$ is the restriction
$\phi^*:\alpha \mapsto \alpha|_{X}$. The assignments $X \mapsto
M(X,\g)$, $\phi \mapsto \phi^*$ are easily seen to define a
contravariant functor from the category of schemes to the category
of Lie algebras. Analogously, for fixed $X$, the assignment $\g
\mapsto M(X,\g)$ is a covariant functor from the category of
finite-dimensional Lie algebras to the category of Lie algebras.
\end{rem}

\begin{example}[Current algebras]\label{eg:current}
Let $X$ be the $n$-dimensional affine space.  Then $A
 \cong k[t_1,\dots,t_n]$ is a polynomial algebra in $n$ variables and
$M(X,\g) \cong \g \otimes k[t_1,\dots,t_n]$ is the so-called
\emph{current algebra}.
\end{example}

\begin{example}[Untwisted multiloop algebras]\label{eg:multiloop}
Let $X= \Spec A$, where $A=k[t_1^{\pm 1},\dots, t_n^{\pm 1}]$ is the
$k$-algebra of Laurent polynomials in $n$ variables. Then $M(X,\g)
\cong \g \otimes k[t_1^{\pm 1}, \dots, t_n^{\pm n}]$ is an
\emph{untwisted multiloop algebra}. In the case $n=1$, it is usually
called the \emph{untwisted loop algebra of $\g$}.
\end{example}

\begin{example}[Tetrahedron Lie algebra]
\label{eg:threepoint} If $X$ is the variety $k \setminus \{0,1\}$,
then $A \cong k[t, t^{-1}, (t-1)^{-1}]$ and $M(X,\lsl_2) \cong
\lsl_2 \otimes k[t,t^{-1},(t-1)^{-1}]$ is the \emph{three point
$\mathfrak{sl}_2$ loop algebra}. Removing any two distinct points of
$k$ results in an algebra isomorphic to $M(X,\lsl_2)$, and so there
is no loss in generality in assuming the points are 0 and 1. It was
shown in \cite{HT07} that $M(X,\lsl_2)$ is isomorphic to the
\emph{tetrahedron Lie algebra} and to a direct sum of three copies
of the \emph{Onsager algebra} (see Example~\ref{eg:onsager}). We
refer the reader to \cite{HT07} and the references cited therein for
further details.
\end{example}


\section{Equivariant map algebras} \label{sec:equiva}

Recall that we assume $\g$ is a finite-dimensional Lie algebra. We
denote the group of Lie algebra automorphisms of $\g$ by $\Aut_k
\g$.  Any Lie algebra automorphism of $\g$, being a linear map, can
also be viewed as an automorphism of $\g$ considered as a scheme. An
action of a group $\Gamma$ on $\g$ and on a scheme $X$ will always
be assumed to be by Lie algebra automorphisms of $\g$ and scheme
automorphisms of $X$.  Recall that there is an induced
$\Gamma$-action on $A$ given by
\[
    g \cdot f = f g^{-1},\quad f \in A,
       \quad g \in \Gamma,
\]
where on the right hand side we view $g^{-1}$ as the corresponding
automorphism of $X$.

\begin{defin}[Equivariant map algebras] \label{def:EMA-geometric}
Let $\Gamma$ be a group acting on a scheme $X$ and a Lie algebra
$\g$ by automorphisms. Then $\Gamma$ acts on $M(X, \g)$ by
automorphisms:
\begin{eqnarray}
  g \cdot \alpha =  g \alpha g^{-1},
    \quad \alpha \in M(X,\g), \quad g \in \Gamma, \label{eq:acmgdef}
\end{eqnarray}
where on the right hand side $g$ and $g^{-1}$ are viewed as
automorphisms of $\g$ and $X$, respectively.  We define
$M(X,\g)^\Gamma$ to be the set of fixed points under this action.
That is,
\[
    M(X,\g)^\Gamma = \{\alpha \in M(X,\g)  : \alpha g = g \alpha \
    \forall\ g \in \Gamma\}
\]
is the subalgebra of $M(X,\g)$ consisting of $\Gamma$-equivariant
maps from $X$ to $\g$. We call $M(X,\g)^\Gamma$ an \emph{equivariant
map algebra}.
\end{defin}

\begin{example}[Discrete spaces]
Suppose $X$ is a discrete (hence finite) variety. Let $X'$ be a
subset of $X$ obtained by choosing one element from each
$\Gamma$-orbit of $X$. Then
\[
    M(X,\g)^\Gamma \cong \prod_{x \in X'} \g^{\Gamma_x},\quad \alpha \mapsto (\alpha(x))_{x
    \in X'},\quad \alpha \in M(X,\g)^\Gamma,
\]
is an isomorphism of Lie algebras, where $\Gamma_x = \{g \in \Gamma
: g \cdot x = x\}$ is the stabilizer subgroup of $x$.
\end{example}

\begin{lem} \label{lem:conjugate-actions}
Let $\Gamma$ be a group acting on a scheme $X$ and a Lie algebra
$\g$. Suppose $\tau_1 \in \Aut_k \g$ and $\tau_2 \in \Aut X$. Then
we can define a second action of $\Gamma$ on $\g$ and $X$ by
declaring $g \in \Gamma$ to act by $\tau_1 g \tau_1^{-1}$ on $\g$ and
by $\tau_2 g \tau_2^{-1}$ on $X$.  Let $\frM$ and $\frM'$ be the
equivariant map algebras with respect to these two actions. That is,
\begin{align*}
  \frM &= \{\alpha \in M(X,\g) : \alpha g = g \alpha\ \forall\ g \in \Gamma \}, \\
  \frM' &= \{\beta \in M(X,\g) : \beta \tau_2 g \tau_2^{-1}  = \tau_1 g
  \tau_1^{-1} \beta \ \forall\ g \in \Gamma\}.
\end{align*}
Then $\frM \cong \frM'$ as Lie algebras.
\end{lem}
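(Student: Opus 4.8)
The plan is to produce an explicit isomorphism by conjugating the identity map of $M(X,\g)$ through $\tau_1$ and $\tau_2$. Concretely, I would define $\Phi\colon M(X,\g)\to M(X,\g)$ by $\Phi(\alpha)=\tau_1\circ\alpha\circ\tau_2^{-1}$. First I would check that $\Phi$ is a Lie algebra automorphism of $M(X,\g)$: since $\tau_2^{-1}$ is a scheme automorphism of $X$ and $\tau_1$ a scheme automorphism of $\g$, the composite $\tau_1\circ\alpha\circ\tau_2^{-1}$ is again a morphism $X\to\g$, and by the functoriality recorded in Remark~\ref{categ} the map $\alpha\mapsto\alpha\circ\tau_2^{-1}$ is the Lie algebra homomorphism $(\tau_2^{-1})^{*}$, while the covariant functor $\g\mapsto M(X,\g)$ carries $\tau_1\in\Aut_k\g$ to the Lie algebra homomorphism $\alpha\mapsto\tau_1\circ\alpha$. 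Thus $\Phi$ is a composite of Lie algebra homomorphisms, with two-sided inverse $\beta\mapsto\tau_1^{-1}\circ\beta\circ\tau_2$.

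Next I would verify that $\Phi$ carries $\frM$ onto $\frM'$ by a direct manipulation of the defining relations, using the paper's convention of writing composition by juxtaposition. If $\alpha\in\frM$, so that $g\alpha=\alpha g$ for all $g\in\Gamma$ (where $g$ acts on $\g$ on the left and on $X$ on the right), then for $\beta=\Phi(\alpha)$ we compute, cancelling and reinserting $\tau_1^{-1}\tau_1$ and $\tau_2^{-1}\tau_2$,
\[
  (\tau_1 g \tau_1^{-1})\,\beta
  = \tau_1\,(g\alpha)\,\tau_2^{-1}
  = \tau_1\,(\alpha g)\,\tau_2^{-1}
  = \beta\,(\tau_2 g \tau_2^{-1}),
\]
which is precisely the condition $\beta(\tau_2 g\tau_2^{-1})=(\tau_1 g\tau_1^{-1})\beta$ defining $\frM'$. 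Hence $\Phi(\frM)\subseteq\frM'$. Running the same computation for $\Phi^{-1}$ — equivalently, applying the previous paragraph to the situation obtained by interchanging the two actions and replacing $\tau_i$ by $\tau_i^{-1}$ — shows $\Phi^{-1}(\frM')\subseteq\frM$. Therefore $\Phi$ restricts to a Lie algebra isomorphism $\frM\xrightarrow{\sim}\frM'$.

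I do not expect any real obstacle in this argument: it is purely formal and does not use finiteness of $\Gamma$ or anything about $X$ or $\g$ beyond the functoriality of $M(-,-)$. The only points requiring mild care are bookkeeping — tracking the order of composition and remembering that $\tau_1$ conjugates the action on $\g$ while $\tau_2$ conjugates the action on $X$ — and confirming at the outset that $\tau_1\circ\alpha\circ\tau_2^{-1}$ is again a regular ($\Gamma$-equivariant, once restricted) map, which is immediate since morphisms of schemes compose.
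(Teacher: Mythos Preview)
Your argument is correct and is exactly the paper's approach: the paper's proof simply states that $\alpha \mapsto \tau_1 \circ \alpha \circ \tau_2^{-1}$ intertwines the two $\Gamma$-actions and hence yields the desired isomorphism, which is precisely what you have verified in detail.
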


\begin{proof}
One easily checks that $\alpha \mapsto \tau_1 \circ \alpha \circ
\tau_2^{-1}$ intertwines the two $\Gamma$-actions and thus yields
the desired automorphism.
\end{proof}

The group $\Gamma$ acts naturally on $\g \otimes A$ by extending the
map $g \cdot (u \otimes f) = (g \cdot u) \otimes (g \cdot f)$ by
linearity. Define
\[
    (\g \otimes A)^\Gamma := \{\alpha \in \g
    \otimes A : g \cdot \alpha = \alpha \ \forall\ g \in \Gamma\}
\]
to be the subalgebra of $\g \otimes A$ consisting of elements fixed
by $\Gamma$.  The proof of the following lemma is immediate.

\begin{lem} \label{lem:invM-iso-O}
Let $\Gamma$ be a group acting on a scheme $X$ and a Lie algebra
$\g$. Then the isomorphism $M(X,\g) \cong \g \otimes A$ of
Lemma~\ref{lem:M-isom-O} is $\Gamma$-equivariant. In particular,
under this isomorphism $(\g \otimes A)^\Gamma$ corresponds to
$M(X,\g)^\Gamma$.
\end{lem}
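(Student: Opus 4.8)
The plan is to piece this together from the two things we already have in hand: the Lie algebra isomorphism $M(X,\g)\cong \g\otimes A$ of Lemma~\ref{lem:M-isom-O}, and the explicit descriptions of the $\Gamma$-actions on each side, namely \eqref{eq:acmgdef} on $M(X,\g)$ and the action $g\cdot(u\otimes f)=(g\cdot u)\otimes(g\cdot f)$ on $\g\otimes A$. The claim that the two actions correspond under the isomorphism is a purely formal compatibility check, and once it is established the statement about fixed-point subalgebras is automatic, since any $\Gamma$-equivariant isomorphism of $\Gamma$-modules restricts to an isomorphism of the fixed-point submodules.

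First I would recall (from the proof of Lemma~\ref{lem:M-isom-O}) exactly how the isomorphism $\Phi: M(X,\g)\to\g\otimes A$ is built: choosing a basis $u_1,\dots,u_n$ of $\g$, a regular map $\alpha: X\to\g$ is written as $\alpha=\sum_i u_i\otimes f_i$ with $f_i\in A$ the component functions, so that $\Phi(\alpha)=\sum_i u_i\otimes f_i$. Then I would compute $\Phi(g\cdot\alpha)$ for $g\in\Gamma$. Using \eqref{eq:acmgdef}, $(g\cdot\alpha)(x)=g\bigl(\alpha(g^{-1}x)\bigr)$; writing $g\cdot u_i=\sum_j c_{ji}(g)\,u_j$ for the matrix of $g$ acting on $\g$, and using the induced action $g\cdot f=f\circ g^{-1}$ on $A$, one gets $g\cdot\alpha=\sum_{i,j}c_{ji}(g)\,u_j\otimes(g\cdot f_i)$, which is exactly $g\cdot\Phi(\alpha)$ under the diagonal action on $\g\otimes A$. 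This shows $\Phi$ intertwines the $\Gamma$-actions.

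Finally, since $\Phi$ is a $\Gamma$-equivariant linear (indeed Lie algebra) isomorphism, it maps $\{v:g\cdot v=v\ \forall g\}$ to $\{w:g\cdot w=w\ \forall g\}$ bijectively; by Lemma~\ref{lem:invM-iso-O}'s own hypotheses this is $M(X,\g)^\Gamma\cong(\g\otimes A)^\Gamma$, and the restriction of a Lie algebra homomorphism to a subalgebra is again one. I do not anticipate a genuine obstacle here; the only point requiring a moment's care is getting the direction of the action on $A$ right (the $g^{-1}$ in $g\cdot f=f\circ g^{-1}$) so that $\Phi$ comes out equivariant rather than anti-equivariant, but this is precisely the convention fixed just before Definition~\ref{def:EMA-geometric}, so it matches by design. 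This is why the lemma can be, and is, stated with the proof left to the reader.
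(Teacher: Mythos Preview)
Your proposal is correct and follows exactly the approach implicit in the paper, which simply states ``the proof of the following lemma is immediate'' and gives no further argument. You have written out the routine verification that the paper leaves to the reader: checking that the isomorphism $\Phi$ intertwines the two $\Gamma$-actions via a basis computation, and then observing that equivariant isomorphisms restrict to fixed-point subalgebras.
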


\begin{rem} \label{remark on generalizations}
Let $V = \Spec A$, an affine scheme but not necessarily an affine
variety.  By assumption, $\Gamma$ acts on $X$, hence on $A$.  Thus
$\Gamma$ acts on $V$ by \cite[I-40]{EH}.  Since $A_V = A_X = A$, we
have $M(X,\g)^\Gamma \cong (\g \otimes A)^\Gamma \cong
M(V,\g)^\Gamma$. Therefore, we lose no generality in assuming that
$X$ is an affine scheme, and we will often do so in the sequel.
\end{rem}

\begin{lem}
If $\Gamma$ acts trivially on $\g$, then $M(X,\g)^\Gamma \cong
M(\Spec (A^\Gamma),\g)$, and hence $M(X,\g)^\Gamma$ is isomorphic to
an (untwisted) map algebra.
\end{lem}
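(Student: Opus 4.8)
The plan is to produce the isomorphism $M(X,\g)^\Gamma \cong M(\Spec(A^\Gamma),\g)$ by combining Lemma~\ref{lem:invM-iso-O} with the standard identification of map algebras in terms of tensor products (Lemma~\ref{lem:M-isom-O}), and then reducing everything to a purely algebraic statement about the fixed-point algebra of $\g \otimes A$ when $\Gamma$ acts trivially on $\g$. First I would invoke Lemma~\ref{lem:invM-iso-O} to replace $M(X,\g)^\Gamma$ by $(\g \otimes A)^\Gamma$, where the $\Gamma$-action on $\g \otimes A$ is $g \cdot (u \otimes f) = (g\cdot u) \otimes (g \cdot f)$. Since $\Gamma$ acts trivially on $\g$, this action is simply $\id_\g \otimes (g\cdot)$, i.e. the action is concentrated on the second tensor factor.

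The key step is then the observation that for a vector space $W$ (here $W = \g$) on which $\Gamma$ acts trivially, one has $(W \otimes A)^\Gamma = W \otimes A^\Gamma$ as subspaces of $W \otimes A$. This is a routine fact: choosing a $k$-basis $\{u_i\}$ of $W$, every element of $W\otimes A$ is uniquely $\sum_i u_i \otimes f_i$, the $\Gamma$-action sends it to $\sum_i u_i \otimes (g\cdot f_i)$, and this is fixed for all $g \in \Gamma$ if and only if each $f_i \in A^\Gamma$. (One should remark that this argument is valid without any finiteness hypothesis on $A$, since only finitely many $u_i$ occur and the basis of $\g$ is finite anyway.) Applying this with $W = \g$ gives $(\g \otimes A)^\Gamma = \g \otimes A^\Gamma$ as a $k$-subspace; moreover it is clearly a Lie subalgebra, and the bracket $[u\otimes f, v\otimes g] = [u,v] \otimes fg$ is exactly the bracket of the map algebra $M(\Spec(A^\Gamma),\g) \cong \g \otimes A^\Gamma$ provided by Lemma~\ref{lem:M-isom-O}, where $A^\Gamma$ is the coordinate ring of $\Spec(A^\Gamma)$. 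Hence $(\g\otimes A)^\Gamma \cong \g\otimes A^\Gamma \cong M(\Spec(A^\Gamma),\g)$ as Lie algebras, and the final assertion that $M(X,\g)^\Gamma$ is an untwisted map algebra follows because $M(\Spec(A^\Gamma),\g)$ is by Definition~\ref{Defmap} (together with Remark~\ref{remark on generalizations}, which lets us treat $\Spec(A^\Gamma)$ as a legitimate affine scheme even when $A^\Gamma$ is not finitely generated).

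I do not expect any genuine obstacle here; the statement is essentially a bookkeeping exercise. The one point requiring a little care is making sure $A^\Gamma$ is allowed as a coordinate ring — that is, that $\Spec(A^\Gamma)$ makes sense as an affine scheme in the conventions of the paper. This is exactly what Remark~\ref{remark on generalizations} secures: $A^\Gamma$ is a commutative associative $k$-algebra (not necessarily finitely generated), and the paper explicitly allows non-finitely-generated coordinate rings, so $\Spec(A^\Gamma)$ is a legitimate affine scheme $V$ with $A_V = A^\Gamma$. The other mild subtlety — that the fixed-point functor commutes with $(-)\otimes W$ when $\Gamma$ acts trivially on $W$ — is handled by the basis argument above and needs no commutative-algebra input. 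So the whole proof is short: cite Lemma~\ref{lem:invM-iso-O}, reduce to $\g\otimes A^\Gamma$ via the trivial-action computation, and cite Lemma~\ref{lem:M-isom-O} in reverse together with Remark~\ref{remark on generalizations} to recognize the result as $M(\Spec(A^\Gamma),\g)$.
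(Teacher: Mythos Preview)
Your proposal is correct and is exactly the kind of routine argument the paper has in mind: the authors simply write ``The proof is straightforward'' and prove nothing further, so your reduction via Lemmas~\ref{lem:invM-iso-O} and~\ref{lem:M-isom-O} together with the observation $(\g\otimes A)^\Gamma=\g\otimes A^\Gamma$ when $\Gamma$ acts trivially on $\g$ is precisely the intended verification.
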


\begin{proof}
The proof is straightforward.
\end{proof}

Let $\Gamma$ be a finite group. Recall that any $\Gamma$-module $B$
decomposes uniquely as a direct sum $B= B^\Gamma \oplus B_\Gamma$ of
the two $\Gamma$-submodules $B^\Gamma$ and $B_\Gamma = \Span\{
g\cdot m - m : g\in \Gamma, m\in B\}$. We let $\sharp : B \to
B^\Gamma$ be the canonical $\Gamma$-module epimorphism, given by
$m\mapsto m^\sharp = \frac{1}{|\Gamma|} \sum_{g\in \Gamma}\, g\cdot
m$. If $B$ is a (possibly nonassociative) algebra and $\Gamma$ acts
by automorphisms, then
\begin{equation}\label{eg:perfrm1}
  [B^\Gamma,\,  B^\Gamma] \subseteq B^\Gamma, \quad [B^\Gamma, B_\Gamma] \subseteq B_\Gamma,
  \quad [m_0, m^\sharp] = [m_0, m]^\sharp,
\end{equation}
for $m_0 \in B^\Gamma$ and $m\in B$.  Since $\g \ot A = (\g^\Gamma
\ot A^\Gamma) \oplus (\g^\Gamma \ot A_\Gamma) \oplus (\g_\Gamma \ot
A^\Gamma) \oplus (\g_\Gamma \ot A_\Gamma)$, we get
\[
  \frM = (\g^\Gamma \ot A^\Gamma) \oplus (\g_\Gamma \ot
  A_\Gamma)^\Gamma
\]
and
\[
  [\g^\Gamma \otimes A^\Gamma, \g^\Gamma \otimes A^\Gamma] \subseteq
  \g^\Gamma \otimes A^\Gamma,\quad [\g^\Gamma \otimes A^\Gamma,
  (\g_\Gamma \otimes A_\Gamma)^\Gamma] \subseteq (\g_\Gamma \otimes
  A_\Gamma)^\Gamma.
\]

\begin{lem} \label{lem:perfrm} We suppose that $\Gamma$ is a
finite group and use the notation above.  Then we have:
\begin{enumerate}
 \item \label{item:perfect-cond1} If $[\g^\Gamma, \g_\Gamma] = \g_\Gamma$, then
\[
  [\g^\Gamma, \g^\Gamma] \ot A^\Gamma \subseteq [\frM, \frM]
  = \big( [\frM, \frM] \cap (\g^\Gamma \ot A^\Gamma)\big) \oplus (\g_\Gamma \ot A_\Gamma)^\Gamma.
\]
  In particular, if $[\g^\Gamma, \g]=\g$, then $\frM$ is perfect.
  \item  If $\g$ is simple, $\g^\Gamma\ne \{0\}$ is perfect and the representation
  of $\g^\Gamma$ on $\g_\Gamma$ does not have a trivial non-zero subrepresentation, then
  $\g=[\g^\Gamma,\g]$, and  hence $\frM$ is perfect.
\end{enumerate}
\end{lem}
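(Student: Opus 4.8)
The plan is to reduce statement (2) to statement (1): by part~(1), if $\g = [\g^\Gamma, \g]$ then $\frM$ is perfect, so it suffices to prove the equality $\g = [\g^\Gamma, \g]$ under the stated hypotheses ($\g$ simple, $\g^\Gamma$ a nonzero perfect subalgebra, and $\g_\Gamma$ having no nonzero trivial $\g^\Gamma$-subrepresentation). First I would set $\frl = \g^\Gamma$ and note that $\g = \frl \oplus \g_\Gamma$ as $\frl$-modules, since $\Gamma$ is finite and acts by automorphisms; moreover $[\frl,\frl]\subseteq\frl$ and $[\frl,\g_\Gamma]\subseteq\g_\Gamma$, so both summands are $\frl$-submodules of the adjoint representation.

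The key step is to analyze $M := [\frl, \g]$. Since $\frl$ is perfect, $[\frl,\frl] = \frl$, so $\frl \subseteq [\frl,\g] = M$; hence $M = \frl \oplus (M \cap \g_\Gamma)$, and $M \cap \g_\Gamma = [\frl, \g_\Gamma]$. Now I claim $M$ is an ideal of $\g$: indeed $[\g, M] = [\g, [\frl,\g]] \subseteq [[\g,\frl],\g] + [\frl,[\g,\g]] \subseteq [M,\g] + [\frl,\g] = [\g,M] + M$ by the Jacobi identity, which is circular as stated — so instead I would argue that $M$ is an $\frl$-submodule containing $\frl$, hence $[\frl, M] \subseteq M$, and then show $[\g_\Gamma, M] \subseteq M$ to conclude $[\g,M]\subseteq M$. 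For this, decompose $\g_\Gamma$ into irreducible $\frl$-submodules (possible since $\frl$ is perfect, but we do not yet know $\frl$ is semisimple, so instead I would use the following cleaner route). Since $\g$ is simple and $M \ni \frl \ne 0$, if $M$ is an ideal then $M = \g$ and we are done; so the whole content is showing $M$ is an ideal. Observe $\g_\Gamma = [\frl,\g_\Gamma] \oplus C$ where $C$ is the sum of trivial $\frl$-submodules of $\g_\Gamma$ (this decomposition into isotypic-type pieces is available because any finite-dimensional module over the perfect Lie algebra $\frl$ — more precisely over its semisimple quotient acting on $\g_\Gamma$ — splits off its maximal trivial submodule; alternatively, $C = \{v \in \g_\Gamma : [\frl, v] = 0\}$ and $[\frl,\g_\Gamma]$ is the sum of all nontrivial components, and one checks $\g_\Gamma = C \oplus [\frl,\g_\Gamma]$ using that the $\frl$-action factors through a semisimple Lie algebra). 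By hypothesis $C = 0$, so $\g_\Gamma = [\frl,\g_\Gamma] = M \cap \g_\Gamma$, whence $M = \frl \oplus \g_\Gamma = \g$, giving the equality directly.

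So the cleanest route is: (i) from perfectness of $\frl$, get $\frl \subseteq [\frl,\g]$, so $[\frl,\g] = \frl \oplus [\frl,\g_\Gamma]$; (ii) show that the $\frl$-module $\g_\Gamma$ has no nonzero trivial submodule, and that the $\frl$-action on $\g_\Gamma$ factors through a finite-dimensional semisimple Lie algebra (namely $\frl/\mathrm{(kernel)}$, using that $\frl$ is perfect and acts by a finite-dimensional representation with $[\frl,\g_\Gamma]$ reductive — this needs a small argument, perhaps via Weyl's theorem applied after noting the image of $\frl$ in $\mathfrak{gl}(\g_\Gamma)$ is perfect and algebraic), so that complete reducibility gives $\g_\Gamma = [\frl,\g_\Gamma]$; (iii) conclude $[\frl,\g] = \frl \oplus \g_\Gamma = \g$, then invoke part~(1). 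The main obstacle is step~(ii): justifying that the $\frl$-representation on $\g_\Gamma$ is completely reducible so that "no trivial submodule" upgrades to "no trivial summand", i.e. $\g_\Gamma = [\frl, \g_\Gamma]$. I expect this follows because the image of $\frl$ in $\End_k(\g)$ under the adjoint action is a perfect linear Lie algebra preserving the Killing form of the simple Lie algebra $\g$ (hence is reductive, and being perfect, semisimple), so by Weyl's theorem all its finite-dimensional representations — in particular $\g_\Gamma$ — are completely reducible; one must be slightly careful that the relevant "image" is indeed semisimple, which I would pin down using Proposition~\ref{prop:basic-facts}\eqref{item:faithful-reps} or a direct Killing-form argument on $\g$.
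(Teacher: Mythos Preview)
Your approach is essentially the paper's: reduce to part~(1) by proving $[\g^\Gamma,\g_\Gamma]=\g_\Gamma$, which together with $[\g^\Gamma,\g^\Gamma]=\g^\Gamma$ gives $[\g^\Gamma,\g]=\g$. You also correctly identify the crux, namely complete reducibility of $\g_\Gamma$ as a $\g^\Gamma$-module, so that ``no trivial submodule'' upgrades to $[\g^\Gamma,\g_\Gamma]=\g_\Gamma$.

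The only point to tighten is your justification of that complete reducibility. The paper simply cites Bourbaki (\emph{Lie}, VII, \S5.1, Proposition~14): the fixed-point subalgebra of a finite group acting on a semisimple Lie algebra is reductive; combined with perfectness this makes $\g^\Gamma$ semisimple, and Weyl's theorem finishes the job. Your stated reason, that the image of $\g^\Gamma$ in $\End_k(\g)$ ``preserves the Killing form, hence is reductive'', is not valid as written: an arbitrary Lie subalgebra of $\mathfrak{so}(V,B)$ need not be reductive. However, your alternative ``direct Killing-form argument on $\g$'' does work and in fact reproves the Bourbaki fact: since $\kappa_\g$ is $\Gamma$-invariant and nondegenerate, the isotypic decomposition $\g=\g^\Gamma\oplus\g_\Gamma$ is $\kappa_\g$-orthogonal, so $\kappa_\g|_{\g^\Gamma}$ is nondegenerate. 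This is a nondegenerate trace form for the representation $\ad_\g:\g^\Gamma\to\mathfrak{gl}(\g)$, which forces $\g^\Gamma$ to be reductive (by the standard argument: $[\g^\Gamma,\rad\g^\Gamma]$ acts nilpotently on $\g$ and hence lies in the radical of this form, so is zero). Being also perfect, $\g^\Gamma$ is semisimple. From there your steps (i)--(iii) coincide with the paper's proof.
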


\begin{proof}\mbox{}
\begin{enumerate}
\item The inclusion $ [\g^\Gamma, \g^\Gamma] \ot A^\Gamma \subseteq [\frM, \frM]$
  is obvious since $\g^\Gamma \otimes 1 \subseteq \frM$. For $u_0\in \g^\Gamma$
  and $v\in \g_\Gamma$ we have, using
  (\ref{eg:perfrm1}), $([u_0, v] \ot f)^\sharp = [u_0 \ot 1, (v\ot
  f)^\sharp] \in [\frM, \frM]$, whence $(\g_\Gamma \ot A_\Gamma)^\Gamma = (\g_\Gamma \ot
  A_\Gamma)^\sharp \subseteq [\frM, \frM]$ by linearity of $\sharp$. The
  same argument also shows that $[\g^\Gamma, \g]=\g$ implies that $\frM$ is
  perfect.

\item
Since $\g^\Gamma$ is reductive by \cite[VII, \S1.5,
Proposition~14]{Bou:Lie78} and perfect by assumption, $\g^\Gamma$ is
semisimple.  Thus the representation of $\g^\Gamma$ on $\g_\Gamma$
is a direct sum of irreducible representations. If $U \subseteq
\g_\Gamma$ is an irreducible $\g^\Gamma$-submodule then $[\g^\Gamma,
U]\subseteq U$ is a submodule, which is non-zero by assumption on
$(\ad \g^\Gamma)|_{\g_\Gamma}$. Hence $[\g^\Gamma, U]=U$ and thus
$[\g^\Gamma, \g_\Gamma]= \g_\Gamma$.  Then $\g = [\g^\Gamma,
\g^\Gamma] \oplus [\g^\Gamma, \g_\Gamma]= [\g^\Gamma, \g]$ follows,
so that we can apply \eqref{item:perfect-cond1}.
\end{enumerate}
\end{proof}

\begin{example}[$\Gamma$ abelian]  \label{eq:abeli} Since the groups in several
of the examples discussed later will be abelian, it is convenient to
discuss the case of an arbitrary abelian  group. We assume that the
action of $\Gamma$ on $\g$ and on $A$ is diagonalizable -- a
condition which is always fulfilled if $\Gamma$ is finite. Hence,
denoting by $\Xi=\Xi(\Gamma)$ the character group of $\Gamma$, the
action of $\Gamma$ on $\g$ induces a $\Xi$-grading of $\g$, i.e.,
\[
   \textstyle \g = \bigoplus_{\xi \in \Xi} \g_\xi, \quad
    [\g_\xi, \g_\zeta] \subseteq \g_{\xi + \zeta},
\]
where $\g_\xi = \{ u \in \g :g\cdot u = \xi(g) u \mbox{ for all }
g\in \Gamma\}$ and $\xi,\zeta \in \Xi$. Thus $\g_0 = \g^\Gamma$ and
$\bigoplus_{0\ne \xi} \g_\xi = \g_\Gamma$ in the notation above.
We have a similar decomposition for $A$. The fixed point subalgebra
of the diagonal action of $\Gamma$ on $\g \ot A$ is therefore
\[
   (\g \ot A)^\Gamma = \textstyle \bigoplus_{\xi \in \Xi}
                 \g_\xi \ot A_{-\xi},
\]
where $-\xi$ corresponds to the representation dual to $\xi$. The
assumption $[\g^\Gamma, \g_\Gamma]=\g^\Gamma$ means $[\g_0,
\g_\xi]=\g_\xi$ for all $0\ne \xi$, and if this is fulfilled we have
\[
  [\frM, \frM] = \left(\sum_\xi [\g_\xi, \g_{-\xi}] \ot A_\xi
  A_{-\xi}\right) \oplus \bigoplus_{0\ne \xi} \g_\xi \otimes
  A_{-\xi}
\]
As a special case, let $\Gamma=\{1,\si\}$ be a group of order $2$.
Then $\g= \g_0 \oplus \g_1$ is a $\ZZ_2$-grading where $\g_s = \{ u
\in \g: \si \cdot u= (-1)^s u \}$ for $s= 0,1\in \ZZ_2$. Thus $\frM
= (\g \ot A)^\Gamma = (\g_0 \ot A_0)  \, \oplus \, (\g_1 \ot A_1)$,
where $A = A_0 \oplus A_1$ is the $\ZZ_2$-grading of $A$ induced by
$\si$. Hence
\[
  [\frM, \, \frM]= ([\g_0,\g_0] \ot A_0 + [\g_1, \g_1] \ot A_1^2)
  \, \oplus \,([\g_0, \g_1]\ot A_1).
\]
In particular, if $\g$ is simple and $\si\ne \Id$, then
\[
  \g = [\g,\g] = ([\g_0,\g_0] +[\g_1,\g_1])\oplus([\g_0,\g_1]).
\]
This implies $[\g_0,\g_1]=\g_1$. Also, since $\g_1 + [\g_1,\g_1]$ is
the ideal generated by $\g_1$, which must be all of $\g$, we have
$[\g_1,\g_1]=\g_0$. Thus
\begin{equation} \label{eq:derivedM}
  [\frM, \, \frM]= ([\g_0,\g_0] \ot A_0 + \g_0 \ot A_1^2)
  \, \oplus \,(\g_1\ot A_1) \quad\mbox{($|\Gamma|=2$, $\g$ simple).}
\end{equation}
Therefore, in this case $\frM$ is perfect as soon as $\g_0$ is
perfect, i.e., semisimple, or $A_0 = A_1^2$, i.e., $A=A_0 \oplus
A_1$ is a strong $\ZZ_2$-grading.
\end{example}

\begin{example}[Generalized Onsager algebra] \label{eg:onsager}
Let $X=k^\times = \Spec k[t,t^{-1}]$, $\g$ be a simple Lie algebra,
and $\Gamma=\{1,\sigma\}$ be a group of order $2$. We choose a set
of Chevalley generators $\{e_i, f_i, h_i\}$ for $\g$ and let
$\Gamma$ act on $\g$ by the standard Chevalley involution, i.e.,
\[
  \sigma(e_i) = -f_i,\ \sigma(f_i) = -e_i,\ \sigma(h_i) = -h_i.
\]
Let $\Gamma$ act on $ k[t,t^{-1}]$ by $\si \cdot t = t^{-1}$,
inducing an action of $\Gamma$ on $X$. We define the
\emph{generalized Onsager algebra $\On(\g)$} to be the equivariant
map algebra associated to these data:
\[
   \On(\g) := M(k^\times,\g)^\Gamma \cong (\g \otimes k[t,t^{-1}])^\Gamma.
\]
These algebras have been considered by G.~Benkart and M.~Lau. The
action of $\sigma$ on $\g$ interchanges the positive and negative
root spaces, and thus the dimension of the fixed point subalgebra
$\g^\Gamma$ is equal to the number of positive roots.  This fact,
together with the classification of automorphisms of order two (see,
for example, \cite[Chapter~X, \S5, Tables~II and~III]{Helga01})
determines $\g_0=\g^\Gamma$ as follows:
\medskip
\begin{center}
  \begin{tabular}{|c|c|}
  \hline
  Type of $\g$ & (Type of) $\g_0$ \\
  \hline
  $A_n$ & $\mathfrak{so}_{n+1}$ \\
  $B_n$ ($n \ge 2$) & $\mathfrak{so}_n \oplus \mathfrak{so}_{n+1}$ \\
  $C_n$ ($n \ge 2$) & $\mathfrak{gl}_n = k \oplus \lsl_n$ \\
  $D_n$ ($n \ge 4$) & $\mathfrak{so}_n \oplus \mathfrak{so}_n$ \\
  $E_6$ & $C_4$ \\
  $E_7$ & $A_7$ \\
  $E_8$ & $D_8$ \\
  $F_4$ & $C_3 \oplus A_1$ \\
  $G_2$ & $A_1 \oplus A_1$ \\
  \hline
  \end{tabular}
\end{center}
\medskip
Since $\mathfrak{so}_2$ is one-dimensional, we see that $\g^\Gamma$
is semisimple in all cases, except $\g = \lsl_2$ (type $A_1$),
$\g=\mathfrak{so}_5$ (type $B_2$) and $\g=\mathfrak{sp}_n$ (type
$C_n$). Hence, using (\ref{eq:derivedM}), we have
\begin{equation}\label{eq:derived-onsager}
  [\On(\g),\, \On(\g)] =
  \begin{cases}
    (\g_0 \ot A_1^2) \oplus  (\g_1 \ot A_1) & \mbox{if $\g=\lsl_2$,} \\
    (\mathfrak{so}_3 \otimes A_0 + \g_0 \otimes A_1^2) \oplus
    (\g_1 \otimes A_1) & \mbox{if $\g=\mathfrak{so}_5$,} \\
    (\lsl_n \otimes A_0 + \g_0 \otimes A_1^2) \oplus (\g_1
    \otimes A_1) & \mbox{if $\g=\mathfrak{sp}_n$, and} \\
    \On(\g) & \mbox{otherwise.}
  \end{cases}
\end{equation}
Therefore
\begin{equation}\label{eq:quotient-onsager}
  \On(\g)/[\On(\g),\, \On(\g)] \cong
  \begin{cases}
    A_0/A_1^2 & \mbox{if $\g=\lsl_2$,
      $\mathfrak{so}_5$, or $\mathfrak{sp}_n$, and} \\
    0 & \mbox{otherwise.}
  \end{cases}
\end{equation}
\end{example}

\begin{example} \label{eg:general-involution}
One can consider the following situation, which is even more general than
Example~\ref{eg:onsager}.  Namely, we consider the same setup except
that we allow $\sigma$ to act by an \emph{arbitrary} involution of
$\g$.    Again, by the classification of automorphisms of order two,
it is known that $\g_0$ is either semisimple or has a
one-dimensional center.  Thus we have
\begin{equation}\label{eq:quotient-general-involution}
  \frM/[\frM,\, \frM] \cong
  \begin{cases}
    0 & \mbox{if $\g_0$ is semisimple, and} \\
    A_0/A_1^2 & \mbox{otherwise.}
  \end{cases}
\end{equation}
\end{example}

\begin{rem}
For $k=\C$ it was shown in \cite{Roan91} that $\On(\lsl_2)$ is
isomorphic to the usual \emph{Onsager algebra}. This algebra was a
key ingredient in Onsager's original solution of the 2D Ising model.
The algebra $\On(\lsl_n)$, $k=\C$, was introduced in \cite{UI96},
although the definition given there differs slightly from the one
given in Example~\ref{eg:onsager}. For $\g=\lsl_n$, the Chevalley
involution of Example~\ref{eg:onsager} is given by $\sigma \cdot u =
-u^t$, while the involution used in \cite{UI96} is given by $E_{ij}
\mapsto (-1)^{i+j+1}E_{ji}$, where $E_{ij}$ is the standard
elementary matrix with the $(i,j)$ entry equal to one and all other
entries equal to zero.  This involution is equal to $\tau_1 \sigma
\tau_1^{-1}$, where $\tau_1 (u) = DuD^{-1}$ for $D= \mathrm{diag}
(\sqrt{-1},+1,\sqrt{-1},\dots)$. Furthermore, the involution of $X$
considered in \cite{UI96} is $x \mapsto (-1)^n x^{-1}$, which is
equal to $\sigma$ if $n$ is even and to $\tau_2 \sigma \tau_2^{-1}$,
where $\tau_2 \cdot x = \sqrt{-1}x$, if $n$ is odd. Therefore it
follows from Lemma~\ref{lem:conjugate-actions} that the two versions
are isomorphic.
\end{rem}

\begin{example}[Multiloop algebras]\label{eg:twistedmultiloop}
\label{eg:twisted-multiloop} Fix positive integers $n, m_1, \dots,
m_n$.  Let
\[
    \Gamma = \langle g_1,\dots, g_n : g_i^{m_i}=1,\
    g_i g_j = g_j g_i,\ \forall\ 1 \le i,j \le n
    \rangle.
\]
Then $\Xi = \Xi(\Gamma) \cong \Z/m_1\Z \times \cdots \times \Z/m_n\Z
\cong \Gamma$. Suppose $\Gamma$ acts on a semisimple $\g$. Note that this is
equivalent to specifying commuting automorphisms $\sigma_i$,
$i=1,\dots,n$, of $\g$ such that $\sigma_i^{m_i}=\id$. For $i =
1,\dots, n$, let $\xi_i$ be a primitive $m_i$-th root of unity. As
in Example~\ref{eq:abeli} we then see that $\g$ has a $\Xi$-grading
for which the homogenous subspace of degree $\bar{\mathbf{k}}$,
$\mathbf{k}=(k_1, \ldots, k_n)\in \ZZ^n$, is given by
\[
    \g_{\bar{\mathbf{k}}} = \{u \in \g :  \sigma_i(u) = \xi_i^{k_i}u\ \forall\
    i=1,\dots,n\}.
\]
Let $X=(k^\times)^n$ and define an action of $\Gamma$ on $X$ by
\[
    g_i \cdot (z_1, \dots, z_n) = (z_1, \dots, z_{i-1},
    \xi_i z_i, z_{i+1}, \dots, z_n).
\]
Then
\begin{equation} \label{eq:twisted-multiloop-def}
    M(\g,\sigma_1,\dots,\sigma_n,m_1,\dots,m_n) := M(X,\g)^\Gamma
\end{equation}
is the \emph{multiloop algebra} of $\g$ relative to $(\sigma_1,
\dots, \sigma_n)$ and $(m_1, \ldots, m_n)$. If all $\si_i=\Id$ we
recover the untwisted multiloop algebra of
Example~\ref{eg:multiloop}. If not all $\si_i=\Id$, the algebra $M$
is therefore sometimes called the \emph{twisted loop algebra}.

With $\Gamma$ and $X$ as above, the induced action of $\Gamma$ on $A
= k[t_1^{\pm 1}, \dots, t_n^{\pm 1}]$ is given by
\[
    \sigma_i \cdot f(t_1, \dots, t_n) = f(t_1, \dots, t_{i-1},
    \xi_i^{-1} t_i, t_{i+1}, \dots, t_n),\quad f \in
    k[t_1^{\pm 1},\dots,t_n^{\pm 1}].
\]
Hence
\begin{equation} \label{eq:twisted-multiloop-diagonal}
    M(\g,\sigma_1,\dots,\sigma_n,m_1,\dots,m_n)
     \cong (\g \otimes k[t_1^{\pm 1}, \dots, t_n^{\pm 1}])^\Gamma
    \cong
     \bigoplus_{\mathbf{k} \in \Z^n} \g_{\bar {\mathbf{k}}}
           \otimes k t^{\mathbf{k}},
\end{equation}
where $t^{\mathbf{k}} = t_1^{k_1} \cdots t_n^{k_n}$.

Loop and multiloop algebras play an important role in the theory of
affine Kac-Moody algebras \cite{Kac90}, extended affine Lie
algebras, and Lie tori. The connection between the last two classes
of Lie algebras is the following: The core and the centerless core
of an extended affine Lie algebra is a Lie torus or centerless Lie
torus respectively, every Lie torus arises in this way, and there is
a precise construction of extended affine Lie algebras in terms of
centerless Lie tori \cite{Neh}. Any centerless Lie torus whose
grading root system is not of type A can be realized as a multiloop
algebra \cite{abfp2}.
\end{example}

While most classification results involving (special cases of)
equivariant map algebras in the literature use abelian groups, we
will see that the general classification developed in the current
paper only assumes that the group $\Gamma$ is finite.  Therefore,
for illustrative purposes, we include here an example of an
equivariant map algebra where the group $\Gamma$ is not abelian.  We
will see in Section~\ref{subsec:applications-nonabelian} that the
representation theory of this algebra is quite interesting.

\begin{example}[A non-abelian example] \label{eg:nonabelian}
Let $\Gamma = S_3$, the symmetric group on 3 objects, $X=
\mathbb{P}^1 \setminus \{0,1,\infty\}$ and $\g=\mathfrak{so}_8$, the
simple Lie algebra of type $D_4$.  The symmetry group of the Dynkin
diagram of type $D_4$ is isomorphic to $S_3$, and so $\Gamma$ acts
naturally on $\g$ by diagram automorphisms (see
Remark~\ref{rem:diagram-autom-case}). Now, given points $x_i, y_i$,
$i=1,2,3$, of $\mathbb{P}^1$, there is a unique M\"obius
transformation of $\mathbb{P}^1$ mapping $x_i$ to $y_i$, $i =
1,2,3$. Thus, for any permutation $\sigma$ of the points
$\{0,1,\infty\}$, there is a unique M\"obius transformation of
$\mathbb{P}^1$ which induces $\sigma$ on the set $\{0,1,\infty\}$.
Hence each permutation $\sigma$ naturally corresponds to an
automorphism of $X$, which we also denote by $\sigma$. Therefore we
can form the equivariant map algebra $\frM=M(X,\g)^\Gamma$. Now, the
subalgebra of $\g$ fixed by the unique order three subgroup of
$\Gamma$ is the simple Lie algebra of type $G_2$ (see
\cite[Proposition~8.3]{Kac90}). One easily checks that this
subalgebra is fixed by all of $\Gamma$. Therefore $\g^\Gamma$ is
perfect. By \cite[Proposition~8.3]{Kac90}, the representation of
$\g^\Gamma$ on $\g_\Gamma$ is a direct sum of two $7$-dimensional
irreducible representations.  Thus, by Lemma~\ref{lem:perfrm},
$\frM$ is perfect.
\end{example}


\section{Evaluation representations} \label{sec:eval-reps}

From now on we assume that $\Gamma$ is a \emph{finite} group, acting
on an affine scheme $X$ and a (finite-dimensional) Lie algebra $\g$
by automorphisms.  We abbreviate $\frM=M(X,\g)^\Gamma$.

\begin{defin}[Restriction]
Let $Y$ be a subscheme of $X$.  Then, as in Remark~\ref{categ}, we
have the \emph{restriction} Lie algebra homomorphism
\[
    \Res^X_Y : M(X,\g)^\Gamma \to M(Y,\g),\quad \Res^X_Y(\alpha) =
    \alpha|_Y,\quad \alpha \in M(X,\g)^\Gamma.
\]
If $Y$ is a $\Gamma$-invariant subscheme, the image of $\Res^X_Y$ is
contained in $M(Y,\g)^\Gamma$.
\end{defin}

\begin{defin}[Evaluation]
Given a finite subset $\bx \subseteq X_\rat$, we define the
corresponding \emph{evaluation map}
\[ \textstyle
    \ev_\bx : M(X,\g)^\Gamma \to \bigoplus_{x \in \bx} \g,\quad
    \alpha \mapsto (\alpha(x))_{x \in \bx},\quad \alpha \in
    M(X,\g)^\Gamma.
\]
\end{defin}

\begin{defin}
For a subset $Z$ of $X$ we define
\[
  \Gamma_Z = \{g \in \Gamma : g \cdot z = z \ \forall\ z \in Z\} \quad
  \text{and} \quad \g^Z = \g^{\Gamma_Z}.
\]
Obviously, $\Gamma_Z$ is a subgroup of $\Gamma$ and $\g^Z$ is a
subalgebra of $\g$.  In particular, for any $x \in X$, we put
$\Gamma_x = \Gamma_{\{x\}}$ and $\g^x = \g^{\{x\}}$.
\end{defin}

\begin{lem} \label{lem:restriction-image} \mbox{}
\begin{enumerate}
  \item \label{lem-item1:restriction-image}
  Let $Z$ be a subset of $X_\rat$. Then $\al(Z) \subseteq  \g^Z$
  for all $\al \in \M(X,\g)^\Gamma$. In particular, $\al(x) \in \g^x$ for
  all $x\in X_\rat$.

  \item \label{lem-item2:restriction-image}
  Let $Z$ be a $\Gamma$-invariant subscheme of $X$ for which the
  restriction $A_X \to A_Z$, $f \mapsto f|_Z$, is surjective. Then
  the restriction map $\Res^X_Z : M(X,\g)^\Gamma \to M(Z,\g)^\Gamma$ is also
  surjective.
\end{enumerate}
\end{lem}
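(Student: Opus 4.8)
For part~\eqref{lem-item1:restriction-image}, the plan is to unravel the defining equivariance condition at a point. First I would fix $x \in X_\rat$ and $g \in \Gamma_x$, so that $g \cdot x = x$. For $\alpha \in M(X,\g)^\Gamma$ the equivariance relation $\alpha \circ g^{-1} = g \circ \alpha$ (from Definition~\ref{def:EMA-geometric}, reading $g, g^{-1}$ as automorphisms of $\g$ and $X$ respectively) gives, upon evaluating at $x$: $\alpha(g^{-1} \cdot x) = g \cdot \alpha(x)$. Since $g \in \Gamma_x$ also stabilizes $x$ (a subgroup, so $g^{-1} \cdot x = x$ too), the left-hand side is just $\alpha(x)$, whence $g \cdot \alpha(x) = \alpha(x)$. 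As this holds for every $g \in \Gamma_x$, we get $\alpha(x) \in \g^{\Gamma_x} = \g^x$. For a general subset $Z \subseteq X_\rat$ and $z \in Z$, note $\Gamma_Z \subseteq \Gamma_z$, so $\g^z = \g^{\Gamma_z} \subseteq \g^{\Gamma_Z} = \g^Z$; combined with $\alpha(z) \in \g^z$ this yields $\alpha(z) \in \g^Z$, i.e. $\alpha(Z) \subseteq \g^Z$. One subtlety to check carefully is that $x$ (and $g^{-1}\cdot x$) being $k$-rational makes the pointwise evaluation $\alpha(x) \in \g$ literally an element of $\g$ rather than of $\g \otimes \kappa(x)$, so that the identities above make sense; this is exactly why we restrict to $X_\rat$.

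For part~\eqref{lem-item2:restriction-image}, the plan is to use the averaging/Reynolds operator $\sharp$ together with the surjectivity of $\Res^X_Z$ at the level of non-equivariant maps. Using the identification $M(X,\g) \cong \g \otimes A_X$ of Lemma~\ref{lem:M-isom-O}, the hypothesis that $f \mapsto f|_Z$ is surjective $A_X \twoheadrightarrow A_Z$ immediately gives that $\id_\g \otimes (\text{restriction}) : \g \otimes A_X \to \g \otimes A_Z$ is surjective, i.e. the unrestricted restriction map $M(X,\g) \to M(Z,\g)$ is surjective. This map is $\Gamma$-equivariant because both the $\Gamma$-action and the restriction are, using that $Z$ is $\Gamma$-invariant so that restriction commutes with the $\Gamma$-action (Lemma~\ref{lem:invM-iso-O} handles the compatibility of the tensor identification with $\Gamma$). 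Now given $\beta \in M(Z,\g)^\Gamma$, lift it to some $\tilde\alpha \in M(X,\g)$ with $\tilde\alpha|_Z = \beta$, and set $\alpha = \tilde\alpha^\sharp = \frac{1}{|\Gamma|}\sum_{g\in\Gamma} g\cdot \tilde\alpha \in M(X,\g)^\Gamma$. Then $\alpha|_Z = (\tilde\alpha|_Z)^\sharp = \beta^\sharp = \beta$, the middle equality because restriction is $\Gamma$-equivariant (hence commutes with $\sharp$) and the last because $\beta$ is already $\Gamma$-fixed. Hence $\Res^X_Z : M(X,\g)^\Gamma \to M(Z,\g)^\Gamma$ is surjective.

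I expect the main obstacle to be purely bookkeeping rather than conceptual: in part~(a), keeping the directions of the automorphisms straight (the action on $X$ is via $g^{-1}$, the action on $\g$ via $g$) and confirming the $k$-rationality point noted above; in part~(b), being careful that restriction to a $\Gamma$-invariant subscheme really does commute with the $\Gamma$-action on both sides so that it commutes with the averaging operator $\sharp$. Neither step requires any serious estimate or construction — the content is entirely in setting up the equivariance correctly and invoking the surjectivity hypothesis on coordinate rings.
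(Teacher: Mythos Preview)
Your proposal is correct and essentially the same as the paper's proof. The paper dispatches part~(1) as ``immediate from the definitions'' and handles part~(2) by noting that $M(X,\g)\to M(Z,\g)$ is surjective (via $\g\otimes A_X\to\g\otimes A_Z$) and then invoking complete reducibility of the $\Gamma$-action to pass to invariants; your averaging argument with $\sharp$ is precisely the concrete mechanism behind that complete-reducibility step.
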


\begin{proof}
Part \eqref{lem-item1:restriction-image}  is immediate from the
definitions. In \eqref{lem-item2:restriction-image},
finite-dimensionality of $\g$ implies that the restriction $M(X,\g)
\to M(Z,\g)$ is surjective. Since $\Gamma$ acts completely reducibly
on $M(X,\g)$ and $M(Z,\g)$, the restriction is then also surjective
on the subalgebras of $\Gamma$-invariants.
\end{proof}

\begin{defin}
We denote by $X_n$ the set of $n$-element subsets $\bx \subseteq
X_\rat$ consisting of $k$-rational points and having the property
that $y \not \in \Gamma \cdot x$ for distinct $x,y \in \bx$.
\end{defin}

\begin{cor} \label{cor:twisted-surjectivity}
For $\bx \in X_n$ the image of $\ev_\bx$ is $\bigoplus_{x \in \bx}
\g^x$.
\end{cor}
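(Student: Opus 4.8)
The plan is to realize $\ev_\bx$ as the composite of a restriction homomorphism onto a finite $\Gamma$-invariant subscheme followed by the isomorphism furnished by the ``discrete spaces'' example, with the surjectivity of the restriction supplied by Lemma~\ref{lem:restriction-image}\eqref{lem-item2:restriction-image}.

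One inclusion is free: by Lemma~\ref{lem:restriction-image}\eqref{lem-item1:restriction-image} we have $\alpha(x)\in\g^x$ for every $\alpha\in\frM$ and every $x\in\bx$, so $\ev_\bx(\frM)\subseteq\bigoplus_{x\in\bx}\g^x$. For the reverse inclusion I would set $Z=\Gamma\cdot\bx=\bigsqcup_{x\in\bx}\Gamma\cdot x$ (the union being disjoint precisely because $\bx\in X_n$), a finite $\Gamma$-invariant set of closed points of $X$ --- each rational point has residue field $k$, hence corresponds to a maximal ideal and is closed. I would give $Z$ the reduced induced subscheme structure, so that $A_Z=A_X/I_Z$ with $I_Z=\bigcap_{z\in Z}\fm_z$; since distinct maximal ideals are comaximal, the Chinese Remainder Theorem identifies $A_Z\cong\prod_{z\in Z}A_X/\fm_z\cong k^{|Z|}$, which shows that $Z$ is a discrete (finite) variety and that the restriction $A_X\to A_Z$ is the (surjective) quotient map. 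Hence Lemma~\ref{lem:restriction-image}\eqref{lem-item2:restriction-image} applies and $\Res^X_Z:\frM\to M(Z,\g)^\Gamma$ is surjective.

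Next I would apply the ``discrete spaces'' example to $Z$. By the defining property of $X_n$, the set $\bx$ contains exactly one point of each $\Gamma$-orbit in $Z$, so it may be taken as the set of orbit representatives, giving an isomorphism $M(Z,\g)^\Gamma\xrightarrow{\sim}\prod_{x\in\bx}\g^{\Gamma_x}=\bigoplus_{x\in\bx}\g^x$, $\beta\mapsto(\beta(x))_{x\in\bx}$. Composing this with $\Res^X_Z$ sends $\alpha\mapsto\alpha|_Z\mapsto(\alpha(x))_{x\in\bx}$, which is exactly $\ev_\bx$; since both maps in the composite are surjective, so is $\ev_\bx$ onto $\bigoplus_{x\in\bx}\g^x$, completing the proof. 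The only step requiring any care is the identification of $A_Z$ --- equivalently, the surjectivity of $A_X\to A_Z$ --- which rests on the rationality of the points of $\bx$ (so that the associated ideals are maximal and pairwise comaximal); everything else is bookkeeping, using that $|\Gamma|<\infty$ makes all the products above finite direct sums.
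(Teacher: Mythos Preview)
Your proof is correct and follows essentially the same route as the paper's own argument: define $Z=\bigcup_{x\in\bx}\Gamma\cdot x$, observe that $A_X\to A_Z$ is surjective so that Lemma~\ref{lem:restriction-image}\eqref{lem-item2:restriction-image} yields surjectivity of $\Res^X_Z$, and then identify $M(Z,\g)^\Gamma\cong\bigoplus_{x\in\bx}\g^x$. You have simply unpacked in more detail what the paper calls ``immediate'' (namely, invoking the discrete-spaces example and spelling out the Chinese Remainder Theorem step).
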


\begin{proof}
Let $Z= \bigcup_{x \in \bx} \Gamma \cdot x$. Then $Z$ is a
$\Gamma$-invariant closed subvariety. Hence $A_X \to A_Z$ is
surjective, and therefore $\Res^X_Z : M(X,\g)^\Gamma \to
M(Z,\g)^\Gamma$ is also surjective by
Lemma~\ref{lem:restriction-image}.  It is immediate that $\ev_\bx : M(Z,\g)^\Gamma
\to \bigoplus_{x \in \bx} \g^x$ is surjective.
\end{proof}

\begin{defin}[Evaluation representation] \label{def:evaluation-rep}
Fix a finite subset $\bx \subseteq X_\rat$ and let $\rho_x : \g^x
\to \End_k V_x$, $x \in \bx$, be representations of $\g^x$ on the
vector spaces $V_x$. Then define $\ev_\bx (\rho_x)_{x \in \bx} $ to
be the composition
\[ \textstyle
    M(X,\g)^\Gamma \xrightarrow{\ev_\bx} \bigoplus_{x \in \bx}
    \g^x \xrightarrow{\otimes_{x \in \bx} \rho_x}
    \End_k (\bigotimes_{x \in \bx} V_x).
\]
This defines a representation of $M(X,\g)^\Gamma$ on $\bigotimes_{x
\in \bx} V_x$ called a \emph{(twisted) evaluation representation}.
\end{defin}

\begin{rem} \label{rem:new-eval-rep}
We note some important distinctions between
Definition~\ref{def:evaluation-rep} and other uses of the term
\emph{evaluation representation} in the literature.  First of all,
some authors reserve the term \emph{evaluation representation} for
the case $n=1$ and would refer to the more general case as a tensor
product of evaluation representations.  Furthermore, traditionally
the $\rho_x$ are representations of $\g$ instead of $\g^x$ and are
required to be faithful. In the case that $\g^x = \g$ for all $x \in
X$ (for instance, if $\Gamma$ acts freely on $X$), this of course
makes no difference.  However, we will see in the sequel that the
more general definition of evaluation representation given above
allows for a more uniform classification of irreducible
finite-dimensional representations.
\end{rem}

\begin{prop} \label{prop:eval-irred}
Let $\bx \in X_n$, and for $x \in \bx$ let $\rho_x : \g^x \to \End_k
V_x$ be an irreducible finite-dimensional representation of $\g^x$.
Then the evaluation representation $\ev_\bx (\rho_x)_{x \in \bx}$ is
an irreducible finite-dimensional representation of
$M(X,\g)^\Gamma$.
\end{prop}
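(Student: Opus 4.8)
The statement is that evaluation at a ``multiplicity-free'' tuple $\bx \in X_n$, combined with irreducible representations $\rho_x$ of the isotropy subalgebras $\g^x$, yields an irreducible representation of $\frM = M(X,\g)^\Gamma$. Finite-dimensionality is clear since each $V_x$ is finite-dimensional and the tensor product is finite. The heart of the matter is irreducibility, and the natural strategy is to factor the evaluation representation through a surjection onto a direct sum of the $\g^x$, reducing to the representation theory of that direct sum.

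First I would invoke Corollary~\ref{cor:twisted-surjectivity}: for $\bx \in X_n$ the evaluation map $\ev_\bx \colon \frM \to \bigoplus_{x\in\bx}\g^x$ is \emph{surjective}. Hence $\ev_\bx(\rho_x)_{x\in\bx}$ factors as $\frM \twoheadrightarrow \bigoplus_{x\in\bx}\g^x \xrightarrow{\otimes_x \rho_x} \End_k(\bigotimes_x V_x)$, and a representation obtained by precomposing with a surjective Lie algebra homomorphism is irreducible if and only if the target representation $\otimes_{x\in\bx}\rho_x$ of $L := \bigoplus_{x\in\bx}\g^x$ is irreducible. So it suffices to show that the outer tensor product of the irreducible $\g^x$-modules $V_x$ is an irreducible $L$-module, where $L$ is the direct sum of the ideals $\g^x$.

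For this I would apply Proposition~\ref{prop:basic-facts}\eqref{item:tensor-prods}: the outer tensor product $V_1\otimes\cdots\otimes V_n$ of nonzero $L_i$-modules is absolutely irreducible (in particular irreducible) if and only if each $V_i$ is absolutely irreducible as an $L_i$-module. Since each $\g^x$ is finite-dimensional and each $V_x$ is a finite-dimensional irreducible $\g^x$-module, each $V_x$ is absolutely irreducible by Schur's Lemma (as noted in the excerpt, an irreducible module of countable — here finite — dimension over any Lie algebra is absolutely irreducible). Therefore $\bigotimes_{x\in\bx}V_x$ is an irreducible $L$-module, and pulling back along the surjection $\ev_\bx$ gives an irreducible $\frM$-module, as desired.

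The only genuinely substantive input is the surjectivity of $\ev_\bx$ on the $\Gamma$-invariants — i.e.\ Corollary~\ref{cor:twisted-surjectivity} — which is where the hypothesis $\bx\in X_n$ (distinct $x,y\in\bx$ lie in distinct $\Gamma$-orbits) is used: it guarantees $Z=\bigcup_{x\in\bx}\Gamma\cdot x$ is a disjoint union of orbits so that evaluation on $M(Z,\g)^\Gamma$ already hits each $\g^x$ independently. Everything else is a formal consequence of the structure theory recalled in Section~\ref{sec:review}. I do not expect any real obstacle; the proof is essentially the observation that the two cited results compose correctly, and the main point to state carefully is that precomposition with a \emph{surjective} homomorphism preserves (ir)reducibility of representations.
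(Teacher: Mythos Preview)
Your proof is correct and follows essentially the same approach as the paper: invoke the surjectivity of $\ev_\bx$ from Corollary~\ref{cor:twisted-surjectivity} and then apply Proposition~\ref{prop:basic-facts}\eqref{item:tensor-prods}. The paper's proof is the one-line version of your argument; your elaboration on absolute irreducibility via Schur's Lemma is a helpful detail the paper leaves implicit.
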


\begin{proof}
Since $\ev_\bx$ is surjective, this follows from
Proposition~\ref{prop:basic-facts}\eqref{item:tensor-prods}.
\end{proof}

\begin{cor}
If $\bx$ is in $X^n$ (but not necessarily in $X_n$), $\g^x$ is
semisimple for all $x \in \bx$, and $\rho_x$ is an arbitrary
finite-dimensional representation of $\g^x$ for each $x \in \bx$,
then the evaluation representation $\ev_\bx (\rho_x)_{x \in \bx}$ is
completely reducible.
\end{cor}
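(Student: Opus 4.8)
Write $V := \bigotimes_{x\in\bx} V_x$ and let $\frL := \im\ev_\bx$, a subalgebra of the semisimple Lie algebra $\bigoplus_{x\in\bx}\g^x$. The plan is to reduce the assertion to the semisimplicity of $\frL$ and then invoke Weyl's theorem. Since the $\frM$-module structure on $V$ is the composition of $\ev_\bx$ with $\bigotimes_{x\in\bx}\rho_x$, a $k$-subspace of $V$ is an $\frM$-submodule if and only if it is a submodule for the restriction of $\bigotimes_{x\in\bx}\rho_x$ to $\frL$; hence $V$ is completely reducible over $\frM$ precisely when it is completely reducible over $\frL$. If $\frL$ is a finite-dimensional semisimple Lie algebra this is automatic by Weyl's theorem on complete reducibility of finite-dimensional representations, no matter what the $\rho_x$ are. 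So the whole corollary comes down to showing that $\frL$ is semisimple. (The point is that, in contrast with Proposition~\ref{prop:eval-irred} where $\bx\in X_n$, distinct points of $\bx$ are now allowed to lie in a common $\Gamma$-orbit, so $\ev_\bx$ need not be surjective onto $\bigoplus_{x\in\bx}\g^x$ and $\frL$ is only a subalgebra.)

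To pin down $\frL$, I would choose $\bx'\subseteq\bx$ meeting each $\Gamma$-orbit that intersects $\bx$ in exactly one point; then $\bx'\in X_{|\bx'|}$, so Corollary~\ref{cor:twisted-surjectivity} gives $\ev_{\bx'}(\frM)=\bigoplus_{x\in\bx'}\g^x$. For each $y\in\bx$ fix $g_y\in\Gamma$ and $x(y)\in\bx'$ with $y=g_y\cdot x(y)$ (taking $g_y=1$ for $y\in\bx'$). By $\Gamma$-equivariance, every $\alpha\in\frM$ satisfies $\alpha(y)=g_y\cdot\alpha(x(y))$, so $\ev_\bx=\Delta\circ\ev_{\bx'}$ where
\[
    \Delta\colon \bigoplus_{x\in\bx'}\g^x \longrightarrow \bigoplus_{y\in\bx}\g^y,
    \qquad (w_x)_{x\in\bx'}\longmapsto (g_y\cdot w_{x(y)})_{y\in\bx}
\]
is the ``twisted diagonal'' map. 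Since each $g_y$ acts on $\g$ by an automorphism, $\Delta$ is a homomorphism of Lie algebras, and it is injective because its $\bx'$-components give back $(w_x)_{x\in\bx'}$. Hence $\frL=\Delta\bigl(\bigoplus_{x\in\bx'}\g^x\bigr)\cong\bigoplus_{x\in\bx'}\g^x$, which is semisimple because every $\g^x$ is, and we are done.

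The one step that needs a little care — and the closest thing to an obstacle — is checking that $\Delta$ is well defined and lands where claimed: that $g_y\cdot w_{x(y)}$ does not depend on the choice of $g_y$ (two admissible choices differ by an element of $\Gamma_{x(y)}$, which fixes $w_{x(y)}\in\g^{x(y)}=\g^{\Gamma_{x(y)}}$), and that $g_y\cdot\g^{x(y)}=\g^y$; both follow at once from $\Gamma_y=g_y\,\Gamma_{x(y)}\,g_y^{-1}$. It is also worth underlining where the hypothesis is used: although $V$ is automatically completely reducible as a module over $\bigoplus_{x\in\bx}\g^x$ by Proposition~\ref{prop:basic-facts}\eqref{item:tensor-prods} together with Weyl's theorem, complete reducibility need not pass to an arbitrary subalgebra — it is precisely the semisimplicity of all $\g^x$, forcing that of $\frL$, that makes the restriction argument go through.
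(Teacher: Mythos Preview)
Your proof is correct. The paper's own proof is a single sentence invoking Proposition~\ref{prop:eval-irred} together with complete reducibility of finite-dimensional representations of each $\g^x$; the intended reading is presumably to decompose each $V_x$ into irreducibles by Weyl, expand the tensor product, then for each summand collapse orbit-equivalent points of $\bx$ to a set $\bx'\in X_{|\bx'|}$ (using the equivariance $\alpha(g\cdot x)=g\cdot\alpha(x)$, which is essentially the content of Lemma~\ref{lem:twisted-eval-invariance}), decompose again at each $x\in\bx'$, and finally apply Proposition~\ref{prop:eval-irred} to each resulting piece.

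Your route is different and in some ways cleaner: rather than iterated decomposition and reduction to the irreducible case, you identify the image $\frL=\im\ev_\bx$ explicitly via the twisted diagonal $\Delta$, obtain $\frL\cong\bigoplus_{x\in\bx'}\g^x$ directly, and then appeal once to Weyl's theorem. This bypasses Proposition~\ref{prop:eval-irred} entirely and makes transparent exactly where the hypothesis ``$\g^x$ semisimple for all $x\in\bx$'' enters. The paper's approach has the minor advantage that it exhibits the decomposition of $V$ into irreducible evaluation representations explicitly, but as a proof of complete reducibility your argument is more economical.
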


\begin{proof}
This follows from Proposition~\ref{prop:eval-irred} and complete
reducibility of finite-dimensional representations of each $\g^x$.
\end{proof}

By abuse of notation, we will sometimes denote a representation of
$\g$ by the underlying vector space $V$.  Then $\ev_x V$, $x \in X$,
will denote the corresponding evaluation representation of
$M(X,\g)^\Gamma$. Note that, with the notation of
Definition~\ref{def:evaluation-rep}, we have
\begin{equation} \label{eq:tensor-prod-evals}
    \ev_\bx (\rho_x)_{x \in \bx} \cong \bigotimes_{x \in \bx} \ev_x V_x.
\end{equation}

Note that $X_\rat$ is $\Gamma$-invariant and that $X_\rat=X$ if $X$ is
an affine variety.  Let $x \in X_\rat$ and $g\in \Gamma$. Since
$\Gamma_{g \cdot x} = g \Gamma_x g^{-1}$ we see that $\g^{g\cdot x}
= g \cdot \g^x$. Hence if $\rho$ is a representation of $\g^x$, then
$\rho \circ g^{-1}$ is a representation of $\g^{g\cdot x}$. Let
$\mathcal{R}_x$ denote the set of isomorphism classes of irreducible
finite-dimensional representations of $\g^x$, and put
$\mathcal{R}_X=\bigsqcup_{x \in X_\rat} \mathcal{R}_x$. Then
$\Gamma$ acts on $\mathcal{R}_X$ by
\[
  \Gamma \times \mathcal{R}_X \to \mathcal{R}_X,\quad (g,[\rho]) \mapsto g
  \cdot [\rho] := [\rho \circ g^{-1}] \in \mathcal{R}_{g \cdot x},
\]
where $[\rho] \in \mathcal{R}_x$ denotes the isomorphism class of a
representation $\rho$ of $\g^x$.

\begin{defin} \label{def:E}
Let $\mathcal{E}$ denote the set of finitely supported
$\Gamma$-equivariant functions $\Psi : X_\rat \to \mathcal{R}_X$
such that $\Psi(x) \in \mathcal{R}_x$. Here the support $\supp \Psi$
of $\Psi \in \mathcal{E}$ is the set of all $x \in X_\rat$ for which
$\Psi(x) \ne 0$, where $0$ denotes the isomorphism class of the
trivial representation.
\end{defin}

For isomorphic representations $\rho$ and $\rho'$ of $\g^x$, the
evaluation representations $\ev_x \rho$ and $\ev_x \rho'$ are
isomorphic. Therefore, for $[\rho] \in \mathcal{R}_x$, we can define
$\ev_x [\rho]$ to be the isomorphism class of $\ev_x \rho$, and this
is independent of the representative $\rho$. Similarly, for a finite
subset $\bx \subseteq X_\rat$ and representations $\rho_x$ of $\g^x$
for $x \in \bx$, we define $\ev_\bx ([\rho_x])_{x \in \bx}$ to be
the isomorphism class of $\ev_\bx (\rho_x)_{x \in \bx}$.

\begin{rem} \label{rem:diagram-autom-case}
Suppose $\g$ is a semisimple Lie algebra.  In the case that $\Gamma$
is cyclic and acts on $\g$ by admissible diagram automorphisms (no
edge joins two vertices in the same orbit), there exists a simple
description of $\mathcal{E}$ which can be seen as follows. Let $I$
be the set of vertices of the Dynkin diagram of $\g$. An action of
$\Gamma$ on this Dynkin diagram gives rise to an action of $\Gamma$
on $\g$ via
\[
  g \cdot h_i = h_{g \cdot i},\quad g \cdot e_i = e_{g \cdot
  i},\quad g \cdot f_i = f_{g \cdot i},\quad g \in \Gamma,
\]
where $\{h_i, e_i, f_i\}_{i \in I}$ is a set of Chevalley generators
of $\g$. We then have a natural action of $\Gamma$ on the weight
lattice $P$ of $\g$ given by
\[
  g \cdot \omega_i = \omega_{g \cdot i},\quad g \in \Gamma,
\]
where $\{\omega_i\}_{i \in I}$ is the set of fundamental weights of
$\g$. Now, isomorphism classes of irreducible finite-dimensional
representations of $\g$ are naturally enumerated by the set of
dominant weights $P^+$ by associating to $\lambda \in P^+$ the
isomorphism class of the irreducible highest weight representation
of highest weight $\lambda$.  Let $\tilde{\mathcal{E}}$ denote the
set of $\Gamma$-equivariant functions $X_\rat \to P^+$ with finite
support. It follows that for $\Psi \in \tilde{\mathcal{E}}$ and $x
\in X_\rat$ we have $\Psi(x) \in (P^+)^{\Gamma_x}$, where
$(P^+)^{\Gamma_x}$ denotes the set of $\Gamma_x$-invariant elements
of $P^+$.  There is a canonical bijection between $(P^+)^{\Gamma_x}$
and the positive weight lattice of $\g^x$ (see
\cite[Proposition~14.1.2]{Lus93}), and so we can associate to
$\Psi(x)$ the isomorphism class of the corresponding representation
of $\g^x$. Thus, we have a natural bijection between
$\tilde{\mathcal{E}}$ and $\mathcal{E}$. Therefore, in the case that
$\Gamma$ acts on $\g$ by admissible diagram automorphisms, the
evaluation representations are naturally enumerated by
$\Gamma$-equivariant maps from $X_\rat$ to the positive weight
lattice $P^+$ of $\g$.  In the case that $\Gamma$ acts freely on
$X$, we can drop the assumption that the diagram automorphisms be
admissible.
\end{rem}

\begin{lem} \label{lem:twisted-eval-invariance}
Suppose $\Psi \in \mathcal{E}$ and $x \in \supp \Psi$.  Then for all
$g \in \Gamma$,
\[
  \ev_x \Psi (x) = \ev_{g \cdot x} \left( g \cdot \Psi(x) \right) =
  \ev_{g \cdot x} \Psi(g \cdot x).
\]
\end{lem}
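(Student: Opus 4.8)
The plan is to unwind the definitions: both sides of each claimed equality are (isomorphism classes of) representations of $\frM$ built from the $\Gamma$-action on $\mathcal{R}_X$ and the evaluation construction, and the identities will follow purely formally from the defining equivariance property $\alpha g = g\alpha$ of elements $\alpha \in \frM$. First, for the second equality $\ev_{g\cdot x}(g\cdot\Psi(x)) = \ev_{g\cdot x}\Psi(g\cdot x)$: since $X_\rat$ is $\Gamma$-invariant, $g\cdot x \in X_\rat$ so $\ev_{g\cdot x}$ is defined, and $\Gamma$-equivariance of $\Psi$ says exactly $\Psi(g\cdot x) = g\cdot\Psi(x)$, giving the equality at the level of isomorphism classes of $\g^{g\cdot x}$-modules, hence after applying $\ev_{g\cdot x}$. (The hypothesis $x \in \supp\Psi$ plays no role here; the statement holds for every $x \in X_\rat$.)

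For the first equality, I would fix a representative $\rho : \g^x \to \End_k V$ of the class $\Psi(x) \in \mathcal{R}_x$. By the definition of the $\Gamma$-action on $\mathcal{R}_X$, and using $\g^{g\cdot x} = g\cdot\g^x$ (which comes from $\Gamma_{g\cdot x} = g\Gamma_x g^{-1}$, as noted earlier), the class $g\cdot\Psi(x) \in \mathcal{R}_{g\cdot x}$ is represented by $\rho\circ g^{-1} : \g^{g\cdot x} \to \End_k V$. Now $\ev_x\rho$ and $\ev_{g\cdot x}(\rho\circ g^{-1})$ are representations of $\frM$ on the \emph{same} space $V$, so it is enough to show they agree as maps. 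The computation to carry out is: for $\alpha \in \frM$, the equivariance $\alpha g = g\alpha$ gives $\alpha(g\cdot x) = g\cdot\alpha(x)$, whence
\[
  \bigl(\ev_{g\cdot x}(\rho\circ g^{-1})\bigr)(\alpha) = \rho\bigl(g^{-1}\cdot\alpha(g\cdot x)\bigr) = \rho(\alpha(x)) = (\ev_x\rho)(\alpha).
\]
Passing to isomorphism classes gives $\ev_x\Psi(x) = \ev_{g\cdot x}(g\cdot\Psi(x))$, completing the argument.

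There is no serious obstacle; the only thing requiring care is the bookkeeping of the several $\Gamma$-actions in play — on $X$, on $\g$, on the subalgebras $\g^x$ versus $\g^{g\cdot x}$, on $\mathcal{R}_X$, and on $\frM$ — and of where the inverses appear, so that $g^{-1}\cdot\alpha(g\cdot x) = \alpha(x)$ comes out with the correct variance. Once the conventions fixed earlier in the paper are used consistently, one in fact obtains $\ev_x\rho = \ev_{g\cdot x}(\rho\circ g^{-1})$ literally as representations of $\frM$, which is marginally stronger than the statement of the lemma.
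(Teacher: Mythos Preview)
Your proposal is correct and is essentially the same argument as the paper's own proof: the paper records the identity $\ev_x \rho = \ev_{g\cdot x}(\rho \circ g^{-1})$ via a commutative diagram whose left triangle encodes $\alpha(g\cdot x) = g\cdot \alpha(x)$, which is exactly the computation you wrote out. The second equality, coming from $\Gamma$-equivariance of $\Psi$, is left implicit in the paper's ``the result follows'' and is handled correctly in your write-up.
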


\begin{proof}
For any $g \in \Gamma$ and representation $\rho$ of $\g^x$, the
following diagram commutes:
\[
    \xymatrix{ & \g^x \ar[dd]^g \ar[dr]^\rho & \\
    M(X,\g)^\Gamma \ar[ur]^{\ev_x} \ar[dr]_{\ev_{g \cdot x}} & & \End_k V \\
    & \g^{g \cdot x} \ar[ur]_{\rho \circ g^{-1}} & }
\]
Thus, $\ev_x \rho = \ev_{g \cdot x} (\rho \circ g^{-1})$ and the
result follows.
\end{proof}

\begin{defin} \label{def:eval-rep}
For $\Psi \in \mathcal{E}$, we define $\ev_\Psi = \ev_\bx
(\Psi(x))_{x \in \bx}$, where $\bx \in X_n$ contains one element of
each $\Gamma$-orbit in $\supp \Psi$.  By
Lemma~\ref{lem:twisted-eval-invariance}, $\ev_\Psi$ is independent
of the choice of $\bx$.  If $\Psi$ is the map that is identically 0
on $X$, we define $\ev_\Psi$ to be the isomorphism class of the
trivial representation of $\frM$.  Thus $\Psi \mapsto \ev_\Psi$
defines a map $\mathcal{E} \to \scS$, where $\scS$ denotes the set
of isomorphism classes of irreducible finite-dimensional
representations of $\frM$.
\end{defin}

\begin{prop} \label{prop:inj}
The map $\mathcal{E} \to \scS$, $\Psi \mapsto \ev_\Psi$, is
injective.
\end{prop}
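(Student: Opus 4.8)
The plan is to show that the evaluation data can be recovered from the isomorphism class of $\ev_\Psi$. Suppose $\Psi, \Psi' \in \mathcal{E}$ satisfy $\ev_\Psi \cong \ev_{\Psi'}$ as representations of $\frM$. Let $\bx$ and $\bx'$ be elements of $X_n$ containing one element from each $\Gamma$-orbit in $\supp\Psi$ and $\supp\Psi'$ respectively. I would first enlarge the picture: choose a finite set $\by \in X_n$ containing one representative from each $\Gamma$-orbit that meets $\supp\Psi \cup \supp\Psi'$, and use Lemma~\ref{lem:twisted-eval-invariance} together with \eqref{eq:tensor-prod-evals} to rewrite both $\ev_\Psi$ and $\ev_{\Psi'}$ as tensor products $\bigotimes_{y\in\by}\ev_y W_y$ and $\bigotimes_{y\in\by}\ev_y W_y'$ indexed by the same set $\by$, where $W_y$ (resp.\ $W_y'$) is the irreducible $\g^y$-module attached to $\Psi$ (resp.\ $\Psi'$) — allowing the trivial module when $y\notin\supp\Psi$ (resp.\ $y\notin\supp\Psi'$). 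So it suffices to prove: if $\bx\in X_n$ and $\rho_x,\rho_x'$ are irreducible finite-dimensional representations of $\g^x$ with $\ev_\bx(\rho_x)_{x\in\bx}\cong\ev_\bx(\rho_x')_{x\in\bx}$, then $\rho_x\cong\rho_x'$ for every $x$.

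The key tool is Corollary~\ref{cor:twisted-surjectivity}: for $\bx\in X_n$, the map $\ev_\bx$ is surjective onto $\bigoplus_{x\in\bx}\g^x$, and its kernel is an ideal $I$ of $\frM$. Both $\ev_\bx(\rho_x)_{x\in\bx}$ and $\ev_\bx(\rho_x')_{x\in\bx}$ factor through $\frM/I \cong \bigoplus_{x\in\bx}\g^x$, so the statement reduces to the corresponding uniqueness statement for the finite direct sum of Lie algebras $L=\bigoplus_{x\in\bx}\g^x=L_1\oplus\cdots\oplus L_n$ with $L_x=\g^x$: if $\bigotimes_x V_x\cong\bigotimes_x V_x'$ as $L$-modules with each $V_x,V_x'$ irreducible (hence, being finite-dimensional over $k$ algebraically closed, absolutely irreducible), then $V_x\cong V_x'$ for all $x$. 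This is exactly the uniqueness assertion contained in Proposition~\ref{prop:basic-facts}\eqref{item:n=2-case} (applied inductively, as in item (3) of that proposition): an irreducible $L$-module that restricts to contain an absolutely irreducible $L_1$-submodule determines its tensor factors up to isomorphism.

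To make the induction run cleanly I would argue as follows. Write $L=L_1\oplus L''$ with $L''=L_2\oplus\cdots\oplus L_n$. The $L$-module $V=\bigotimes_x V_x$ restricted to $L_1$ is $V_1^{\oplus d}$ with $d=\dim\bigotimes_{x\ne x_1}V_x$, so it contains the absolutely irreducible $L_1$-submodule $V_1$; by Proposition~\ref{prop:basic-facts}\eqref{item:n=2-case} the isomorphism classes of the two tensor factors (namely $V_1$ and $\bigotimes_{x\ne x_1}V_x$) are uniquely determined by $V$. Applying the same to $V'=\bigotimes_x V_x'$ and using $V\cong V'$ gives $V_1\cong V_1'$ and $\bigotimes_{x\ne x_1}V_x\cong\bigotimes_{x\ne x_1}V_x'$ as $L''$-modules; now induct on $n$. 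This recovers each $\rho_x\cong\rho_x'$, hence $\Psi(y)=\Psi'(y)$ for all $y\in\by$, and therefore $\Psi=\Psi'$ since $\Psi,\Psi'$ vanish outside $\Gamma\cdot\by$.

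I expect the only real subtlety to be bookkeeping rather than mathematics: one must be careful that passing to a common index set $\by$ (with trivial modules inserted at the extra orbits) does not change the isomorphism class of the evaluation representation, which is precisely what Lemma~\ref{lem:twisted-eval-invariance} and equation~\eqref{eq:tensor-prod-evals} guarantee, and that the trivial module is genuinely allowed as an irreducible finite-dimensional representation of each $\g^y$ so the reduced statement still applies. Everything else is a direct appeal to the uniqueness clauses already recorded in Proposition~\ref{prop:basic-facts}.
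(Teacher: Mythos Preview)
Your argument is correct, but it follows a different route than the paper's proof. The paper does not appeal to the uniqueness of tensor factorizations in Proposition~\ref{prop:basic-facts}\eqref{item:n=2-case}. Instead, assuming $\Psi(x)\ne\Psi'(x)$ at some point $x$, it uses Corollary~\ref{cor:twisted-surjectivity} to produce a subalgebra $\mathfrak{a}\subseteq\frM$ with $\ev_x(\mathfrak{a})=\g^x$ and $\ev_{x'}(\mathfrak{a})=0$ for all other $x'$ in $\supp\Psi\cup\supp\Psi'$ (outside $\Gamma\cdot x$); restricting to $\mathfrak{a}$ then gives $\ev_\Psi|_{\mathfrak{a}}\cong\Psi(x)^{\oplus m}$ and $\ev_{\Psi'}|_{\mathfrak{a}}\cong\Psi'(x)^{\oplus m'}$, which are nonisomorphic since $\Psi(x)\ne\Psi'(x)$.

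Your approach is more structural: you pass to the quotient $\frM/\Ker\ev_\by\cong\bigoplus_{y\in\by}\g^y$ and invoke the uniqueness clause of Proposition~\ref{prop:basic-facts}\eqref{item:n=2-case} inductively. This is clean and makes the underlying reason transparent, at the cost of the extra bookkeeping of a common index set and of citing the abstract tensor-factorization lemma. The paper's method is more hands-on: it isolates a single orbit by a well-chosen subalgebra and compares isotypic decompositions, avoiding the induction but requiring the (easy) observation that $\Psi(x)^{\oplus m}\cong\Psi'(x)^{\oplus m'}$ forces $\Psi(x)\cong\Psi'(x)$. Either argument is short; they simply trade one elementary ingredient for another.
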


\begin{proof}
Suppose $\Psi \ne \Psi' \in \mathcal{E}$.  Then there exists $x \in
X_\rat$ such that $\Psi(x) \ne \Psi'(x)$.  Without loss of
generality, we may assume $\Psi(x) \ne 0$.  Let
\[
  m = (\dim \ev_\Psi)/(\dim \Psi(x)),\qquad m' = (\dim
  \ev_{\Psi'})/(\dim \Psi'(x)),
\]
where the dimension of an isomorphism class of representations is
simply the dimension of any representative of that class and
$m'=\dim \ev_{\Psi'}$ if $\Psi'(x) = 0$.  By
\eqref{eq:tensor-prod-evals}, $m$ and $m'$ are positive integers. By
Corollary~\ref{cor:twisted-surjectivity}, there exists a subalgebra
$\mathfrak{a}$ of $\frM$ such that $\ev_x (\mathfrak{a}) = \g^x$ and
$\ev_{x'} (\mathfrak{a})=0$ for all $x' \in \left( \supp \Psi \cup
\supp \Psi' \right) \setminus \{\Gamma \cdot x\}$.  Then
\[
  \ev_\Psi|_{\mathfrak{a}} = \Psi(x)^{\oplus m},\quad
  \ev_{\Psi'}|_{\mathfrak{a}}= \Psi'(x)^{\oplus m'}.
\]
Since $\Psi(x) \ne \Psi'(x)$, we have $\ev_\Psi \ne \ev_{\Psi'}$. In
the above, we have used the convention that the restriction of an
isomorphism class is the isomorphism class of the restriction of any
representative and that a direct sum of isomorphism classes is the
isomorphism class of the corresponding direct sum of
representatives.
\end{proof}

\begin{lem}\label{lem:AGammamod} Let $\frK$ be an ideal of $\frM$. If
$\frM/\frK$ does not contain non-zero solvable ideals, e.g. if
$\frM/\frK$ is finite-dimensional semisimple, then $\frK$ is an
$A^\Gamma$-submodule of $\frM$.
\end{lem}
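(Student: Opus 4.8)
The plan is to exploit the rigid way in which multiplication by an element $a \in A^\Gamma$ on the $A$-factor of $\frM = (\g \ot A)^\Gamma$ interacts with the Lie bracket. Write $\phi_a \colon \g \ot A \to \g \ot A$ for the $k$-linear operator $u \ot f \mapsto u \ot af$. Since $g\cdot(a\cdot x) = (g\cdot a)\cdot(g\cdot x) = a\cdot(g\cdot x)$ for all $g \in \Gamma$ and $x \in \g \ot A$ (here using $a \in A^\Gamma$), the operator $\phi_a$ preserves the subspace of $\Gamma$-invariants, hence restricts to a well-defined operator on $\frM$, which is precisely the action of $a$ as an element of the ring $A^\Gamma$. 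From $[u \ot f, v \ot g] = [u,v]\ot fg$ and the commutativity of $A$ one reads off the identity
\[
  [\phi_a\alpha,\, \beta] \;=\; \phi_a[\alpha,\beta] \;=\; [\alpha,\, \phi_a\beta] \qquad (\alpha,\beta \in \frM),
\]
which is the only computation involved.

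Next I would fix $\kappa \in \frK$ and $a \in A^\Gamma$ and show $\phi_a\kappa \in \frK$. For every $\mu \in \frM$ the identity above gives $[\phi_a\kappa,\,\mu] = [\kappa,\,\phi_a\mu] \in [\frK,\frM] \subseteq \frK$, since $\frK$ is an ideal. Passing to $\overline{\frM} = \frM/\frK$, this says that the image of $\phi_a\kappa$ commutes with all of $\overline{\frM}$, i.e.\ lies in the center $Z(\overline{\frM})$. But $Z(\overline{\frM})$ is an abelian --- in particular solvable --- ideal of $\overline{\frM}$, so by hypothesis $Z(\overline{\frM}) = 0$; in particular this vanishing is automatic when $\overline{\frM}$ is finite-dimensional semisimple, which accounts for the ``e.g.''\ in the statement. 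Hence $\phi_a\kappa \in \frK$, and since $\kappa \in \frK$ and $a \in A^\Gamma$ were arbitrary, $A^\Gamma \cdot \frK \subseteq \frK$, i.e.\ $\frK$ is an $A^\Gamma$-submodule of $\frM$.

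There is no serious obstacle here: the argument is essentially two lines once the setup is in place. The two points to be careful about are (i) that $\phi_a$ is defined on $\frM$ only because $a$ is $\Gamma$-invariant, not merely an element of $A$ --- which is exactly why the conclusion involves $A^\Gamma$ rather than $A$ --- and (ii) recalling that the center of a Lie algebra is an ideal, so that the hypothesis on $\frM/\frK$ can be applied to it. The one mildly clever step is observing that the symmetric identity $[\phi_a\kappa,\mu] = [\kappa,\phi_a\mu]$ forces $\phi_a\kappa$ to be central modulo $\frK$, which is stronger than ``$\phi_a\kappa$ lies in some ideal'' and is what makes the solvable-ideal hypothesis bite.
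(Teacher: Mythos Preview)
Your argument is correct and is essentially the same as the paper's: both use the centroid identity $[\phi_a\alpha,\beta]=\phi_a[\alpha,\beta]=[\alpha,\phi_a\beta]$ for $a\in A^\Gamma$ to produce an abelian ideal in $\frM/\frK$ and then invoke the hypothesis. The only cosmetic difference is that the paper shows the image of $f\frK$ is itself an abelian ideal (computing $[f\frK+\frK,f\frK+\frK]\subseteq[\frK+f\frK+f^2\frK,\frK]\subseteq\frK$), whereas you observe directly that each $\phi_a\kappa$ is central modulo $\frK$ and appeal to the center being an abelian ideal; your packaging is slightly slicker but the content is identical.
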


\begin{proof}
Let $f\in A^\Gamma$. Since $A^\Gamma$ is contained in the centroid
of $\frM$, the $k$-subspace $f\frK$ is an ideal of $\frM$: $[\frM,
f\frK] = f[\frM, \frK]\subseteq f\frK$. Let $\pi : \frM \to
\frM/\frK$ be the canonical epimorphism.  Since $[f\frK + \frK,
f\frK + \frK] \subseteq [\frK + f \frK + f^2\frK, \frK] \subseteq
\frK$, the ideal $\pi(f\frK)$ of $\frM/\frK$ is abelian, whence
$f\frK \subseteq \frK$.
\end{proof}

\begin{prop} \label{prop:fixed-point-alg-eval}
  Suppose $I$ is an ideal of $\mathfrak{M}$ such that $\mathfrak{M}/I = N_1
  \oplus \dots \oplus N_s$ with $N_i$ a finite-dimensional simple
  Lie algebra for $i=1,\dots,s$.  Let $\pi : \mathfrak{M} \to
  \mathfrak{M}/I$ denote the canonical projection and for
  $i=1,\dots,s$, and let $\pi_i : \mathfrak{M} \to N_i$ denote the map
  $\pi$ followed by the projection from $\mathfrak{M}/I$ to $N_i$.
  Then there exist $x_1, \dots, x_s \in X_\rat$ such that
  \begin{equation}
    \pi(f \alpha) = (f(x_1) \pi_1(\alpha), \dots, f(x_s)
    \pi_s(\alpha)) \ \forall\ f \in A^\Gamma,\ \alpha \in \mathfrak{M}.
  \end{equation}
\end{prop}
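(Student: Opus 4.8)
The plan is to exploit, exactly as in the proof of Lemma~\ref{lem:AGammamod}, that $A^\Gamma$ lies in the centroid of $\frM=(\g\ot A)^\Gamma$, acting by $f\cdot(u\ot a)=u\ot fa$. Since $\frM/I\cong N_1\oplus\cdots\oplus N_s$ is semisimple it has no non-zero solvable ideals, so Lemma~\ref{lem:AGammamod} shows that $I$ is an $A^\Gamma$-submodule of $\frM$ and the $A^\Gamma$-action descends to $\frM/I$. Write $c_f\in\End_k(\frM/I)$ for the operator induced by $f\in A^\Gamma$; then $c_f$ is centroidal, i.e.\ $c_f([a,b])=[c_f a,b]=[a,c_f b]$ for all $a,b\in\frM/I$, the assignment $f\mapsto c_f$ is a unital $k$-algebra homomorphism $A^\Gamma\to\End_k(\frM/I)$, and $\pi(f\alpha)=c_f(\pi(\alpha))$ for $\alpha\in\frM$. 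Everything thus reduces to understanding the operators $c_f$.

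The first step is to show that $c_f$ preserves each summand $N_i$. Indeed $N_i$, being a finite-dimensional simple Lie algebra, is perfect, so any $a\in N_i$ is a finite sum of brackets $[b,c]$ with $b,c\in N_i$; then $c_f[b,c]=[c_f b,c]$, and decomposing $c_f b$ along $N_1\oplus\cdots\oplus N_s$, only its $N_i$-component can bracket non-trivially with $c\in N_i$, since $[N_j,N_i]=0$ for $j\ne i$. Hence $c_f a\in N_i$, so $c_f|_{N_i}$ is a centroidal transformation of $N_i$. Now any non-zero centroidal transformation of the simple algebra $N_i$ has kernel and image an ideal of $N_i$, hence is invertible, so the centroid of $N_i$ is a division algebra, finite-dimensional over the algebraically closed field $k$, and therefore equals $k\,\id_{N_i}$. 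Consequently $c_f|_{N_i}=\chi_i(f)\,\id_{N_i}$ for a scalar $\chi_i(f)\in k$, and since $f\mapsto c_f$ is a $k$-algebra homomorphism, $\chi_i\colon A^\Gamma\to k$ is a homomorphism of $k$-algebras.

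It remains to realize each $\chi_i$ as evaluation at a $k$-rational point of $X=\Spec A$. Every $a\in A$ is a root of the monic polynomial $\prod_{g\in\Gamma}(T-g\cdot a)\in A^\Gamma[T]$, so $A$ is integral over $A^\Gamma$; by lying-over, the maximal ideal $\ker\chi_i$ of $A^\Gamma$ is dominated by a prime $\fm_i$ of $A$, and $A/\fm_i$ is then an integral domain integral over the field $A^\Gamma/\ker\chi_i=k$, hence a field algebraic over $k$, hence equal to $k$. Thus $x_i:=\fm_i\in X_\rat$, and for $f\in A^\Gamma$ we have $f-\chi_i(f)\in\ker\chi_i\subseteq\fm_i$, i.e.\ $f(x_i)=\chi_i(f)$. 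Combining the steps, for $f\in A^\Gamma$ and $\alpha\in\frM$,
\[
  \pi(f\alpha)=c_f(\pi(\alpha))=\bigl(\chi_1(f)\pi_1(\alpha),\dots,\chi_s(f)\pi_s(\alpha)\bigr)=\bigl(f(x_1)\pi_1(\alpha),\dots,f(x_s)\pi_s(\alpha)\bigr).
\]
I expect the main obstacle to be the second step: checking that the centroidal action descends to $\frM/I$ and, above all, that each $c_f$ preserves the individual simple ideals $N_i$ (this is where perfectness of the $N_i$ enters) and hence acts on $N_i$ by a scalar. Once $c_f|_{N_i}$ is known to be scalar, recovering a $k$-rational point of $X$ from the character $\chi_i$ is routine commutative algebra using integrality of $A$ over $A^\Gamma$ and the Nullstellensatz over the algebraically closed field $k$.
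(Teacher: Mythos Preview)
Your proof is correct and follows essentially the same approach as the paper: descend the centroidal $A^\Gamma$-action via Lemma~\ref{lem:AGammamod}, show that $A^\Gamma$ acts by scalars on each simple factor $N_i$, and realize the resulting character of $A^\Gamma$ as evaluation at a $k$-rational point of $X$. The only organizational difference is that the paper applies Lemma~\ref{lem:AGammamod} to each $\ker\pi_i$ separately (so the $A^\Gamma$-action on $N_i$ is immediate and your preservation-of-summands step becomes unnecessary) and then invokes Schur's lemma for the irreducible adjoint $\frM$-action on $N_i$ in place of your equivalent centroid argument; your lifting of $\chi_i$ to a rational point via integrality and lying-over is more explicit than the paper's, which simply chooses a preimage under $X\to X\gq\Gamma$.
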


\begin{proof}
  It suffices to show that for $i=1,\dots,s$, $f \in A^\Gamma$ and
  $\alpha \in \mathfrak{M}$ we have $\pi_i(f \alpha) = f(x_i)
  \pi_i(\alpha)$ for some $x_i \in X$.  Since $N_i$ is simple, the
  action of $\mathfrak{M}$ on $N_i$ induced by the adjoint action
  is irreducible.  By Lemma~\ref{lem:AGammamod}, $N_i$ is an
  $A^\Gamma$-module and $\pi_i$ is an $A^\Gamma$-module homomorphism.
  Since the action of $A^\Gamma$ commutes with the action of
  $\mathfrak{M}$, we have that $A^\Gamma$ must act by scalars and thus
  as a character $\chi : A^\Gamma \to k$.  This character corresponds
  to evaluation at a point $\tilde x_i \in X{\gq}\Gamma := \Spec A^\Gamma$.  Choosing any
  $x_i$ in the preimage of $\tilde x_i$ under the canonical
  projection $X \to X{\gq}\Gamma$, the result follows.
\end{proof}

\begin{rem} If $s=1$, then in the graded setting
these types of maps have been studied extensively in \cite{abfp}.
There $\rho$ is a character of the full centroid of $\frM$, while in
the above $A^\Gamma$ is a priori only a subalgebra of the centroid
of $\frM$.
\end{rem}

\begin{prop} \label{prop:lifting}
If $\supp \Psi \subseteq \{x \in X : \g^x = \g\}$, then any
evaluation representation in the isomorphism class $\ev_\Psi$ of $M(X,\g)^\Gamma$
is obtained by restriction from an evaluation representation of the
untwisted map algebra $M(X,\g)$.  In particular, the restriction map
from the set of isomorphism classes of evaluation representations of
$M(X,\g)$ to the set of isomorphism classes of evaluation
representations of $M(X,\g)^\Gamma$ is surjective if $\g^x = \g$ for
all $x \in X$.
\end{prop}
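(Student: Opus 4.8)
The plan is to reduce an arbitrary evaluation representation of $\frM = M(X,\g)^\Gamma$ to one of $M(X,\g)$ by comparing the two evaluation maps at points where the stabilizer acts trivially on $\g$. Fix $\Psi \in \mathcal{E}$ with $\supp \Psi \subseteq \{x \in X : \g^x = \g\}$, and choose $\bx \in X_n$ containing one point from each $\Gamma$-orbit in $\supp \Psi$. At each $x \in \bx$ we have $\g^x = \g$, so $\Psi(x) = [\rho_x]$ for some irreducible finite-dimensional representation $\rho_x : \g \to \End_k V_x$ of $\g$ itself; these same data define an honest evaluation representation $\ev_\bx^{M(X,\g)}(\rho_x)_{x \in \bx}$ of the \emph{untwisted} map algebra $M(X,\g)$ on $\bigotimes_{x \in \bx} V_x$.

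Next I would observe that the evaluation map $\ev_\bx$ for $\frM$ is literally the restriction to $\frM \subseteq M(X,\g)$ of the evaluation map $\ev_\bx$ for $M(X,\g)$: both send $\alpha \mapsto (\alpha(x))_{x \in \bx}$, and for $\alpha \in \frM$ we have $\alpha(x) \in \g^x = \g$ by Lemma~\ref{lem:restriction-image}, so no information is lost. Composing with $\otimes_{x \in \bx} \rho_x$ on both sides, the diagram
\[
  \xymatrix{
    M(X,\g) \ar[r]^-{\ev_\bx} & \bigoplus_{x \in \bx} \g \ar[r]^-{\otimes \rho_x} & \End_k(\bigotimes V_x) \\
    \frM \ar@{^(->}[u] \ar[ur]_-{\ev_\bx} & &
  }
\]
commutes, which says precisely that $\ev_\Psi$ (any representative in its class) is the restriction to $\frM$ of the untwisted evaluation representation $\ev_\bx^{M(X,\g)}(\rho_x)_{x \in \bx}$. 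This proves the first assertion. Since any other representative of the class $\ev_\Psi$ differs by an isomorphism of $\bigotimes V_x$, and each $\rho_x$ can be adjusted within its isomorphism class accordingly, the statement holds for every evaluation in the class, not just a distinguished one.

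For the final sentence, suppose $\g^x = \g$ for all $x \in X$. Then every $\Psi \in \mathcal{E}$ automatically satisfies the support hypothesis, so by the first part every isomorphism class of evaluation representations of $\frM$ is the restriction of one of $M(X,\g)$; this is exactly surjectivity of the restriction map on isomorphism classes of evaluation representations. I expect the only mild subtlety to be bookkeeping with isomorphism classes versus actual representations — making sure that ``restriction of the class'' is well-defined and that choosing $\bx$ to meet each orbit once (legitimate by Lemma~\ref{lem:twisted-eval-invariance}) does not affect the outcome — but this is routine given the conventions already set up in Definition~\ref{def:eval-rep} and equation~\eqref{eq:tensor-prod-evals}. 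There is no serious obstacle; the content is entirely the commuting-square observation that $\ev_\bx$ on $\frM$ is the corestriction of $\ev_\bx$ on $M(X,\g)$ once all the $\g^x$ equal $\g$.
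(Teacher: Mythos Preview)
Your proof is correct and matches the paper's approach exactly: the paper simply says ``The proof is immediate,'' and its (suppressed) details amount to precisely the commuting-square argument you give, with the same choice of $\bx$ and the same diagram.
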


\begin{proof}
The proof is immediate.
\end{proof}


\section{Classification of irreducible finite-dimensional representations} \label{sec:classification}

We consider a finite group $\Gamma$, acting on a finite-dimensional
Lie algebra $\g$ and an affine scheme $X$. We put $\frM =
M(X,\g)^\Gamma$.

\begin{lem} \label{lem:ideal-of-subring} {\rm (\cite[II, Corollaire~2 de la
Proposition~6, \S3.6]{bou:A})}
  Let $S$ be a subring of a ring $R$ and $I$ an ideal of $S$.  Then
  $R \otimes_S (S/I) \cong R/(RI)$.
\end{lem}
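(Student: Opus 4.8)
The plan is to exhibit an explicit isomorphism $R \otimes_S (S/I) \to R/(RI)$ and check it is well-defined with an explicit two-sided inverse. First I would define a map $\varphi : R \times (S/I) \to R/(RI)$ by $\varphi(r, \bar s) = \overline{rs}$, where $\bar s$ denotes the class of $s \in S$ in $S/I$ and $\overline{rs}$ denotes the class of $rs \in R$ in $R/(RI)$. One checks this is well-defined in the second variable: if $s - s' \in I$ then $rs - rs' = r(s-s') \in RI$. It is clearly biadditive, and it is $S$-balanced, since for $t \in S$ we have $\varphi(rt, \bar s) = \overline{rts} = \varphi(r, \overline{ts}) = \varphi(r, t\bar s)$. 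Hence by the universal property of the tensor product $\varphi$ factors through an additive map $\Phi : R \otimes_S (S/I) \to R/(RI)$ with $\Phi(r \otimes \bar s) = \overline{rs}$.

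Next I would construct the inverse. Define $\psi : R \to R \otimes_S (S/I)$ by $\psi(r) = r \otimes \bar 1$. This is additive, and I claim $\psi(RI) = 0$: for $r \in R$ and $a \in I$ we have $\psi(ra) = ra \otimes \bar 1 = r \otimes \overline{a \cdot 1} = r \otimes \bar a = r \otimes \bar 0 = 0$, using that $a \in S$ so it can be moved across the balanced tensor product. Therefore $\psi$ descends to an additive map $\Psi : R/(RI) \to R \otimes_S (S/I)$ with $\Psi(\bar r) = r \otimes \bar 1$.

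Finally I would verify that $\Phi$ and $\Psi$ are mutually inverse. On generators, $\Phi(\Psi(\bar r)) = \Phi(r \otimes \bar 1) = \overline{r \cdot 1} = \bar r$, so $\Phi \circ \Psi = \id$. In the other direction, $\Psi(\Phi(r \otimes \bar s)) = \Psi(\overline{rs}) = rs \otimes \bar 1 = r \otimes \overline{s \cdot 1} = r \otimes \bar s$, again moving $s \in S$ across the tensor product; since such elements $r \otimes \bar s$ generate $R \otimes_S (S/I)$ as an abelian group, $\Psi \circ \Phi = \id$. This proves the claim; both maps are manifestly module homomorphisms over $R$ (acting on the left) if one wishes to record that refinement, though for the application only the additive/$k$-linear structure is needed. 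The only mild subtlety — and really the only place any thought is required — is keeping straight when an element may legitimately be slid across the $S$-balanced tensor, which is exactly the step where one uses that $I \subseteq S$ and $1 \in S$; everything else is bookkeeping with the universal property.
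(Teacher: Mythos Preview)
Your proof is correct. The paper does not actually prove this lemma; it simply records it with a reference to Bourbaki (\cite[II, Corollaire~2 de la Proposition~6, \S3.6]{bou:A}), so there is no argument to compare against. Your explicit construction of mutually inverse maps via the universal property of the tensor product is exactly the standard proof, and the one point you flag---that $a\in I\subseteq S$ may be moved across the $S$-balanced tensor, together with $1\in S$---is indeed the only place where the hypothesis that $S$ is a subring (rather than just a subset) is used.
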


Note that $A^\Gamma$ is the coordinate ring of the quotient
$X{\gq}\Gamma$. For a point $\left[ x\right] \in X{\gq}\Gamma$, let
$\mathfrak{m}_{\left[ x\right]}$ denote the corresponding maximal
ideal of $A^\Gamma$ and define $A_{\left[ x\right]} = A
\otimes_{A^\Gamma} (A^\Gamma/\mathfrak{m}_{\left[ x\right]})$. We
will sometimes view $\left[ x\right]$ as an orbit in $X$.

\begin{prop} \label{prop:evaluation-factoring}
  Suppose $\frK$ is an ideal of $\frM$ such that the quotient
  algebra $\frM/\frK$ is finite-dimensional and simple.  Then there
  exists a point $x \in X_\rat$ such that the canonical epimorphism
  $\pi : \frM \to \frM/\frK$ factors through the evaluation map
  $\ev_x : \frM \to \g^x$.
\end{prop}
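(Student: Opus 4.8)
The plan is to exploit the structure theory already developed, in particular Proposition~\ref{prop:fixed-point-alg-eval} together with Lemma~\ref{lem:AGammamod} and Lemma~\ref{lem:ideal-of-subring}. First I would apply Proposition~\ref{prop:fixed-point-alg-eval} in the case $s=1$, $N_1 = \frM/\frK$, $I = \frK$: this produces a point $\tilde{x} \in X{\gq}\Gamma = \Spec A^\Gamma$ such that $\pi(f\alpha) = f(x)\,\pi(\alpha)$ for all $f \in A^\Gamma$ and $\alpha \in \frM$, where $x \in X_\rat$ is any preimage of $\tilde{x}$ under the projection $X \to X{\gq}\Gamma$; equivalently, $\pi$ kills $\fm_{[x]}\frM$, where $\fm_{[x]}$ is the maximal ideal of $A^\Gamma$ corresponding to $\tilde{x}$.

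Next, I would reinterpret this as saying that $\pi$ factors through the quotient $\frM/(\fm_{[x]}\frM)$. The point is to identify this quotient with something that maps naturally onto $\g^x$ via $\ev_x$. Using $\frM = (\g \otimes A)^\Gamma$, one has $\fm_{[x]}\frM = (\g \otimes \fm_{[x]}A)^\Gamma$ (here one needs that taking $\Gamma$-invariants is exact on $\g \otimes A$-modules, which holds because $\Gamma$ is finite and $\operatorname{char} k = 0$, so $\sharp$ splits everything), and hence $\frM/(\fm_{[x]}\frM) \cong (\g \otimes A_{[x]})^\Gamma$ where $A_{[x]} = A \otimes_{A^\Gamma} (A^\Gamma/\fm_{[x]})$ by Lemma~\ref{lem:ideal-of-subring}. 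The ring $A_{[x]}$ is a finite-dimensional (possibly non-reduced) $k$-algebra whose associated reduced scheme is exactly the orbit $\Gamma \cdot x$; evaluation at $x$ gives an $A^\Gamma$-algebra homomorphism $A_{[x]} \to k$, and restricting to the $x$-component and taking $\Gamma$-invariants yields a surjection $(\g \otimes A_{[x]})^\Gamma \twoheadrightarrow \g^x$ which is precisely $\ev_x$ (by Lemma~\ref{lem:restriction-image} and Corollary~\ref{cor:twisted-surjectivity}).

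So it remains to show that $\pi$, now viewed as a surjection $(\g \otimes A_{[x]})^\Gamma \to \frM/\frK$ onto a finite-dimensional simple Lie algebra, actually factors through $\ev_x$. The kernel of $\ev_x$ on $(\g \otimes A_{[x]})^\Gamma$ consists of $\Gamma$-invariant elements supported on the nilradical of $A_{[x]}$ (plus the off-$x$ components of the orbit, which are permuted with the $x$-component); this kernel is a \emph{nilpotent} ideal of $(\g \otimes A_{[x]})^\Gamma$ — concretely, if $\mathfrak{n}$ denotes the nilpotent ideal of $A_{[x]}$ then $(\g \otimes \mathfrak{n})$-type elements multiply into lower powers of $\mathfrak{n}$ and eventually vanish. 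Since $\frM/\frK$ is simple, it has no non-zero solvable (in particular no non-zero nilpotent) ideals, so the image of $\Ker(\ev_x)$ under $\pi$ is zero, i.e. $\Ker(\ev_x) \subseteq \frK$. Therefore $\pi$ factors through $\ev_x : \frM \to \g^x$, as desired.

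\textbf{Main obstacle.} The routine-looking but genuinely delicate step is the nilpotency argument: one must show that $\Ker(\ev_x)$ (restricted appropriately to the single orbit) is a nilpotent, or at least solvable, ideal of $(\g\otimes A_{[x]})^\Gamma$, which hinges on controlling the nilradical of the Artinian ring $A_{[x]}$ and checking that taking $\Gamma$-fixed points does not destroy nilpotency — this is where the finiteness of $\Gamma$ and $\operatorname{char} k = 0$ (ensuring exactness of $\sharp$ and that $|\Gamma|$ is invertible) are essential. One should also double-check that the orbit $\Gamma \cdot x$ being a single closed point of $\Spec A^\Gamma$ means the relevant completion/localization only sees that one orbit, so that $\ev_x$ really does land in $\g^x$ and not in a larger direct sum over the orbit.
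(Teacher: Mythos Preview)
Your approach is essentially the same as the paper's: invoke Proposition~\ref{prop:fixed-point-alg-eval} to descend to $(\g\otimes A_{[x]})^\Gamma$, identify $\ev_x$ with a surjection onto $\g^x$ whose kernel is $(\g\otimes N)^\Gamma$ for $N$ the nilradical of $A_{[x]}$, and then use simplicity of $\frM/\frK$ to kill the image of this kernel. One caveat: your assertion that $A_{[x]}$ is finite-dimensional is not justified in the generality of the paper, since $A$ is not assumed finitely generated (for instance, with $\Gamma=\Z/2$ acting by $x_i\mapsto -x_i$ on $A=k[x_1,x_2,\dots]$ and $\mathfrak n$ the ideal of the origin in $A^\Gamma$, one gets $A_{[x]}=k[x_1,x_2,\dots]/(x_ix_j)$, which is infinite-dimensional). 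The paper sidesteps this by arguing element-by-element: each $\alpha\in\Ker\zeta$ is a \emph{finite} sum $\sum u_j\otimes \bar f_j$ with nilpotent $\bar f_j$, so the ideal it generates is nilpotent and hence dies in $\frM/\frK$. (Alternatively, one can rescue your global claim by noting that every $\bar a\in A_{[x]}$ satisfies the degree-$|\Gamma|$ monic $\prod_{g\in\Gamma}(T-\overline{g\cdot a})\in k[T]$, so every element of $N$ has $\bar a^{|\Gamma|}=0$; polarizing in characteristic~$0$ then forces $N^{|\Gamma|}=0$, hence $(\g\otimes N)^\Gamma$ is nilpotent after all.) Your parenthetical about ``off-$x$ components'' in $\Ker(\ev_x)$ is misleading: $\Gamma$-invariance already pins those down, and the kernel is exactly $(\g\otimes N)^\Gamma$.
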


\begin{proof}
We have shown in Proposition~\ref{prop:fixed-point-alg-eval} that
there exists a character $\chi : A^\Gamma \to k$ such that $\pi(\al
f) = \pi(\al) \chi(f)$ holds for all $\al \in \frM$ and $f\in
A^\Gamma$. Let $\frn = \Ker \chi \in \Spec(A^\Gamma)$ be the
corresponding $k$-rational point. Temporarily viewing $\frM$ as a
$A^\Gamma$-module, it follows that $\frn$ annihilates $\frM/\frK$,
whence $\frn \frM \subseteq \frK$, and we can factor $\pi$ through
obvious maps:
\[
  \xymatrix
  { \frM \ar@{->>}[r]^(0.4)\pi \ar@{->>}[d] & \frM/\frK \\
       \frM/\frn \frM \ar@{->>}[ru] }
\]
Observe
\begin{equation} \label{thm:fac1}
 \frM/\frn\frM \cong \frM \ot_{A^\Gamma} (A^\Gamma/\frn)
   = (\g \ot _k A)^\Gamma \ot_{A^\Gamma} (A^\Gamma/\frn)
   = \big(\g \ot_k A  \ot_{A^\Gamma} (A^\Gamma/\frn) \big)^\Gamma,
\end{equation}
where in the last equality we used that $\Gamma$ acts trivially on
$A^\Gamma/\frn$. Also note, by Lemma~\ref{lem:ideal-of-subring},
\begin{equation} \label{thm:fac2}
   A \ot_{A^\Gamma} (A^\Gamma/\frn) \cong A / I \quad \hbox{for } I=A
\frn. \end{equation} Hence, putting these canonical isomorphisms
together, we get a new factorization:
$$\xymatrix
{ \frM \ar@{->>}[r]^\pi \ar@{->>}[d] & \frM/\frK \\
   \frM/\frn \frM  \ar[r]^<<<<\cong  &(\g \ot_k \big(A/I)\big)^\Gamma
         \ar@{->>}[u]_\psi }
$$
It now remains to show that $\psi$ factors through $\ev_x$ for an
appropriate $x\in X$. To this end, let $[\frn ] = \{ \fm \in \Spec A
:  \fm \cap A^\Gamma = \frn\}$. One knows (\cite[V, \S2]{Bou:ACb})
that $[\frn]$ is a non-empty set of $k$-rational points on which
$\Gamma$ acts transitively.  Let $[\frn] = \{ \fm_1, \ldots,
\fm_s\}$. We claim
\begin{equation} \label{thm:fac3}
   \sqrt{I} = \fm_1 \cap \dots \cap \fm_s.
\end{equation}
Indeed, any $\frp \in V(I) \subseteq \Spec A$ satisfies $\frp \cap
A^\Gamma = \frn$ so that $V(I) = [\frn]$.  Now (\ref{thm:fac3})
follows from \cite[II, \S2.6, Corollaire de la
Proposition~13]{Bou:ACa}.

We have an exact sequence of algebras and $\Gamma$-modules
\begin{equation} \label{eq:pro1}
  \textstyle
  0 \to \g\ot_k \big((\bigcap_i \fm_i)/I \big) \to \g \ot_k (A/I)
  \to \g \ot_k ( A /\bigcap_i \fm_i) \to 0,
\end{equation}
where
\begin{equation} \textstyle \label{eq:pro2}
  \g \ot_k(A / \bigcap_i \fm_i)
  \cong \bigoplus_i \big(\g \ot_k (A/\fm_i)) \
\end{equation}
since $A/\bigcap_i \fm_i \cong \bigoplus_i A/\fm_i$.  Now observe
that the summands on the right hand side of (\ref{eq:pro2}) are
permuted by the action of $\Gamma$. Thus, if we fix $x=\fm \in [
\frn ]$ we have
\[
  \textstyle
  \big( \g \ot_k (A / \bigcap_i \fm_i ) \big)^\Gamma
  \cong ( \g \ot_k (A/\fm))^{\Gamma_x} = \g^x.
\]
Therefore, taking $\Gamma$-invariants in (\ref{eq:pro1}) we get an
epimorphism
\[
  \zeta : \big( \g\ot_k (A/I)\big)^\Gamma
  \twoheadrightarrow \g^x
\]
with kernel $\big(\g\ot_k (\bigcap_i \fm_i)/I \big)^\Gamma$.

Any $\al \in \Ker \zeta$ is a finite sum $\al = \sum_j u_j \ot \bar
f_j$, where every $\bar f_j\in (\bigcap_i \fm_i)/I$ is nilpotent by
(\ref{thm:fac3}). The ideal $J$ of $\big(\frg \ot_k
(A/I)\big)^\Gamma$ generated by $\al$ is therefore nilpotent.  Since
$\frM/\frK$ is simple, it does not contain a non-zero nilpotent
ideal. Thus $\psi(J) = 0$. Therefore $\psi$ factors through $\zeta$
and we get the following commutative diagram:
\[
  \xymatrix{
  \mathfrak{M} \ar[rr]^\pi  \ar@{->>}[d]_p && \frM/\frK \\
  \big( \g \ot_k (A/I)\big)^\Gamma \ar@{->>}[rru]^\psi  \ar@{->>}[rr]_\zeta && \g^x \ar[u]
  }
\]
Since $\zeta \circ p = \ev_x$, the result follows.
\end{proof}

\begin{rem} \label{rem:generalization}
Proposition~\ref{prop:evaluation-factoring} remains true for $k$ an
arbitrary algebraically closed field whose characteristic does not
divide the order of $\Gamma$ and for $\g$ an arbitrary
finite-dimensional (not necessarily Lie or associative) algebra.
\end{rem}

\begin{cor} \label{cor:small-reps} \mbox{}
\begin{enumerate}
  \item \label{cor-item1:small-reps}
  Suppose $\varphi : \mathfrak{M} \to \End_k V$ is an irreducible finite-dimensional
  representation such that $\mathfrak{M}/\ker
  \varphi$ is a semisimple Lie algebra.  Then $\varphi$ is an irreducible finite-dimensional
  evaluation representation.
  \item \label{cor-item2:small-reps} The irreducible finite-dimensional representations of\/ $\frM$ are precisely the
  representations of the form $\la \ot \varphi$ where $\la \in
  (\frM/[\frM,\frM])^*$ and $\varphi$ is an irreducible finite-dimensional evaluation
  representation with $\frM/\Ker \varphi$ semisimple. The factors $\la$
  and $\varphi$ are uniquely determined.
\end{enumerate}
\end{cor}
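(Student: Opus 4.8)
For part \eqref{cor-item1:small-reps}, the idea is to combine Proposition~\ref{prop:evaluation-factoring} with Proposition~\ref{prop:basic-facts}. Write $\bar\frM = \frM/\Ker\varphi$ and decompose the semisimple Lie algebra $\bar\frM = N_1 \oplus \cdots \oplus N_s$ into its simple ideals. Each projection $\frM \to \bar\frM \to N_i$ is surjective with kernel $\frK_i$, an ideal of $\frM$ with $\frM/\frK_i \cong N_i$ simple and finite-dimensional, so Proposition~\ref{prop:evaluation-factoring} produces a point $x_i \in X_\rat$ through which this projection factors via $\ev_{x_i}$. First I would handle the case where the $x_i$ lie in distinct $\Gamma$-orbits: then, bundling the points into some $\bx \in X_n$ and using surjectivity of $\ev_\bx$ (Corollary~\ref{cor:twisted-surjectivity}), the representation $\varphi$ is pulled back from an irreducible representation of $\bigoplus_{i} \g^{x_i}$, which by Proposition~\ref{prop:basic-facts}\eqref{item:tensor-prods} is a tensor product $\bigotimes_i \rho_{x_i}$ of irreducible representations $\rho_{x_i}$ of $\g^{x_i}$; this exhibits $\varphi$ as the evaluation representation $\ev_\bx(\rho_{x_i})$. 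The point requiring a little care is that \emph{a priori} two of the $x_i$ could lie in the same $\Gamma$-orbit, or the factorization of $\varphi$ through $\bigoplus \ev_{x_i}$ is not automatic just because each $\pi_i$ factors through $\ev_{x_i}$; one must check that the kernel $\Ker\varphi = \bigcap_i \frK_i$ contains $\Ker(\bigoplus_i \ev_{x_i})$ after merging points in a common orbit (using that $\g^{g\cdot x} = g\cdot\g^x$ and replacing $\rho_{x_i}$ by $\rho_{x_i}\circ g^{-1}$), so that $\varphi$ genuinely factors through a single $\ev_\bx$ with $\bx \in X_n$. I expect this bookkeeping — reducing the multiset $\{x_1,\dots,x_s\}$ to one representative per orbit while keeping track of the tensorand representations — to be the main (though routine) obstacle.

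For part \eqref{cor-item2:small-reps}, I would invoke Lemma~\ref{lem:deco} applied to $L = \frM$: any irreducible finite-dimensional representation $\rho$ of $\frM$ decomposes uniquely as $\rho = \rho_1 \otimes \rho_2$ with $\rho_1 = \la$ a one-dimensional representation given by a unique $\la \in (\frM/[\frM,\frM])^*$, and with $\frM/\Ker\rho_2$ finite-dimensional semisimple. By part \eqref{cor-item1:small-reps}, $\rho_2 = \varphi$ is an irreducible finite-dimensional evaluation representation. This gives the desired form $\rho = \la \otimes \varphi$. Conversely, any such $\la \otimes \varphi$ is irreducible finite-dimensional: $\varphi$ is irreducible by Proposition~\ref{prop:eval-irred} (or by construction via Proposition~\ref{prop:basic-facts}\eqref{item:tensor-prods}), and tensoring with a one-dimensional representation preserves irreducibility. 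Uniqueness of the factors is exactly the uniqueness clause in Lemma~\ref{lem:deco}, once one observes that the hypothesis "$\frM/\Ker\varphi$ semisimple" pins down $\varphi$ as the $\rho_2$ of that lemma (and hence $\la$ as $\rho_1$).

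The only slightly delicate point beyond Lemma~\ref{lem:deco} is making sure the two descriptions of the "semisimple part" match: Lemma~\ref{lem:deco} gives $\rho_2$ with $\frM/\Ker\rho_2$ semisimple, and part \eqref{cor-item1:small-reps} identifies such $\rho_2$ with an evaluation representation, so no extra argument is needed — the content is really the combination of Lemma~\ref{lem:deco} with part \eqref{cor-item1:small-reps}. I would therefore keep the write-up short, spending most of the effort on the orbit-merging step in part \eqref{cor-item1:small-reps}.
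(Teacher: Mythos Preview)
Your proposal is correct and follows essentially the same route as the paper: for part~\eqref{cor-item1:small-reps} you decompose $\frM/\Ker\varphi$ into simple ideals, apply Proposition~\ref{prop:evaluation-factoring} to each projection, and then perform the orbit-merging bookkeeping to land in some $\bx\in X_n$ before invoking Proposition~\ref{prop:basic-facts}; for part~\eqref{cor-item2:small-reps} you invoke Lemma~\ref{lem:deco} and feed its $\rho_2$ back into part~\eqref{cor-item1:small-reps}. The orbit-merging step you flag as the main obstacle is indeed the only nontrivial point, and your description of it (choosing one representative per $\Gamma$-orbit among the $x_i$ and twisting the tensorands by the appropriate $g^{-1}$, noting $\Ker\ev_x=\Ker\ev_{g\cdot x}$) is exactly what is needed.
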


\begin{proof}
  Part \eqref{cor-item1:small-reps} follows from
  Proposition~\ref{prop:evaluation-factoring} and the results of
  Section~\ref{sec:review}.  Part \eqref{cor-item2:small-reps} is
  then a simple application of Lemma~\ref{lem:deco}.
\end{proof}

We now state our main theorem which gives a classification of the
irreducible finite-dimensional representations of an arbitrary
equivariant map algebra.  Recall the definitions of $\mathcal{E}$
(Definition~\ref{def:E}) and $\scS$ (Definition~\ref{def:eval-rep}).

\begin{theo} \label{thm:fd-reps=eval-reps+linear-form}
Suppose $\Gamma$ is a finite group acting on an affine scheme $X$
and a finite-dimensional Lie algebra $\g$.  Then the map
\[
  (\frM/[\frM,\frM])^* \times \mathcal{E} \to \scS,\quad
  (\lambda, \Psi) \mapsto \lambda \otimes \ev_\Psi,\qquad
  \lambda \in (\frM/[\frM,\frM])^*,\quad \Psi \in \mathcal{E},
\]
is surjective.  In particular, all irreducible finite-dimensional
representations of $\frM$ are tensor products of an evaluation
representation and a one-dimensional representation.

Furthermore, we have that $\lambda \otimes \ev_\Psi = \lambda'
\otimes \ev_{\Psi'}$ if and only if there exists $\Phi \in
\mathcal{E}$ such that $\dim \ev_\Phi=1$, $\lambda' = \lambda -
\ev_\Phi$ and $\ev_{\Psi'} = \ev_{\Psi \otimes \Phi}$.  Here $\Psi
\otimes \Phi \in \mathcal{E}$ is given by $(\Psi \otimes \Phi)(x) =
\Psi(x) \otimes \Phi(x)$, where $\Psi(x)$ (respectively $\Phi(x)$)
is the one-dimensional trivial representation if $x \not \in \supp
\Psi$ (respectively $x \not \in \supp \Phi$). In particular, the
restriction of the map $(\lambda, \Psi) \mapsto \lambda \otimes
\ev_{\Psi}$ to either factor (times the zero element of the other)
is injective.
\end{theo}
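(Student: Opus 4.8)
The plan is to obtain surjectivity from Corollary~\ref{cor:small-reps}\eqref{cor-item2:small-reps}, and to prove the equivalence by combining the uniqueness in Lemma~\ref{lem:deco} --- applied both to $\frM$ itself and, pointwise, to the algebras $\g^x$ --- with the injectivity of Proposition~\ref{prop:inj}. For surjectivity, I would first observe that any irreducible finite-dimensional evaluation representation $\varphi=\ev_\bx(\rho_x)_{x\in\bx}$ is isomorphic to $\ev_\Psi$ for some $\Psi\in\mathcal{E}$: merging the factors at points in a common $\Gamma$-orbit by Lemma~\ref{lem:twisted-eval-invariance} reduces to $\bx\in X_n$; then $\ev_\bx$ is onto $\bigoplus_{x\in\bx}\g^x$ by Corollary~\ref{cor:twisted-surjectivity}, so Proposition~\ref{prop:basic-facts}\eqref{item:tensor-prods} forces each $\rho_x$ to be irreducible, and the function equal to $0$ off $\Gamma\cdot\bx$ and to $g\cdot[\rho_x]$ at $g\cdot x$ (well defined because $\Gamma_x$ acts trivially on $\g^x$) lies in $\mathcal{E}$ and realizes $\varphi$. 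Given an arbitrary $\rho\in\scS$, Corollary~\ref{cor:small-reps}\eqref{cor-item2:small-reps} writes $\rho\cong\lambda\otimes\varphi$ with $\lambda\in(\frM/[\frM,\frM])^*$ and $\varphi$ an irreducible finite-dimensional evaluation representation, and the observation rewrites $\varphi\cong\ev_\Psi$; this gives surjectivity and, in particular, the tensor-product assertion.

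For the ``if'' direction I would first record the natural isomorphism $\ev_{\Psi\otimes\Phi}\cong\ev_\Psi\otimes\ev_\Phi$ (it follows from \eqref{eq:tensor-prod-evals} and the obvious identity $\ev_x(\sigma\otimes\tau)=\ev_x\sigma\otimes\ev_x\tau$ once one enlarges the finite index set by trivial factors so all three evaluations use the same $\bx$), and note that $\Psi\otimes\Phi\in\mathcal{E}$. Since $\ev_\Phi$ is one-dimensional it is a linear form $\mu$, so the hypotheses give $\lambda'=\lambda-\mu$ and $\ev_{\Psi'}\cong\ev_\Psi\otimes\mu$, whence $\lambda'\otimes\ev_{\Psi'}\cong(\lambda-\mu)\otimes\ev_\Psi\otimes\mu\cong\lambda\otimes\ev_\Psi$, the last step because the tensor product of the one-dimensional representations $\lambda-\mu$ and $\mu$ is $\lambda$.

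The ``only if'' direction is the main obstacle. Apply Lemma~\ref{lem:deco} to each $\g^x$ to write $\Psi(x)=\Xi(x)\otimes\Psi_0(x)$ with $\Xi(x)$ one-dimensional and $\g^x/\Ker\Psi_0(x)$ semisimple, and likewise $\Psi'=\Xi'\otimes\Psi_0'$. The uniqueness in Lemma~\ref{lem:deco} forces $\Xi,\Psi_0,\Xi',\Psi_0'$ to remain $\Gamma$-equivariant and finitely supported, so all four lie in $\mathcal{E}$ with $\Psi=\Xi\otimes\Psi_0$ and $\Psi'=\Xi'\otimes\Psi_0'$. Using \eqref{eq:tensor-prod-evals}, the identity $\ev_x(\sigma\otimes\tau)=\ev_x\sigma\otimes\ev_x\tau$, and surjectivity of the evaluation maps (which identifies $\frM/\Ker(\ev_{\Psi_0})$ with the direct sum of the semisimple algebras $\g^x/\Ker\Psi_0(x)$, hence semisimple), one checks that $\lambda\otimes\ev_\Psi\cong(\lambda+\ev_\Xi)\otimes\ev_{\Psi_0}$ is precisely the decomposition singled out by Lemma~\ref{lem:deco}, and similarly $\lambda'\otimes\ev_{\Psi'}\cong(\lambda'+\ev_{\Xi'})\otimes\ev_{\Psi_0'}$. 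Thus an isomorphism $\lambda\otimes\ev_\Psi\cong\lambda'\otimes\ev_{\Psi'}$ yields, by the uniqueness in Lemma~\ref{lem:deco}, $\lambda+\ev_\Xi=\lambda'+\ev_{\Xi'}$ and $\ev_{\Psi_0}\cong\ev_{\Psi_0'}$, the latter giving $\Psi_0=\Psi_0'$ by Proposition~\ref{prop:inj}. Now put $\Phi(x)=\Xi'(x)\otimes\Xi(x)^*$, which is one-dimensional, $\Gamma$-equivariant and finitely supported, so $\Phi\in\mathcal{E}$ with $\dim\ev_\Phi=1$. Then $\ev_\Phi$ corresponds to the linear form $\ev_{\Xi'}-\ev_\Xi$, so $\lambda-\ev_\Phi=\lambda+\ev_\Xi-\ev_{\Xi'}=\lambda'$; and since $\Xi(x)\otimes\Xi(x)^*$ is trivial, $\Psi(x)\otimes\Phi(x)=\Xi(x)\otimes\Xi(x)^*\otimes\Xi'(x)\otimes\Psi_0(x)\cong\Xi'(x)\otimes\Psi_0'(x)=\Psi'(x)$, so $\Psi\otimes\Phi=\Psi'$ and hence $\ev_{\Psi'}=\ev_{\Psi\otimes\Phi}$, as required.

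Finally, restriction to the first factor is $\lambda\mapsto\lambda\otimes\ev_0$, and its injectivity follows from the ``only if'' direction with $\Psi=\Psi'=0$ (which forces $\ev_\Phi$ trivial, hence $\lambda=\lambda'$), while restriction to the second factor is the map $\Psi\mapsto\ev_\Psi$, injective by Proposition~\ref{prop:inj}. I expect the delicate points to be the pointwise application of Lemma~\ref{lem:deco} with correct bookkeeping of the $\Gamma$-action on the resulting functions, and the verification that the two displayed factorizations are genuinely the ones characterized by Lemma~\ref{lem:deco}.
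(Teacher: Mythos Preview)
Your argument is correct and follows the same route the paper intends: the paper's own proof is the one-liner ``This follows from Corollary~\ref{cor:small-reps} and Lemma~\ref{lem:deco},'' and you have simply unpacked what that entails, especially for the ``only if'' direction. Your pointwise application of Lemma~\ref{lem:deco} to each $\g^x$ (using its uniqueness to preserve $\Gamma$-equivariance and finite support), the identification of $\frM/\Ker(\ev_{\Psi_0})$ with $\bigoplus_x \g^x/\Ker\Psi_0(x)$ via surjectivity of $\ev_\bx$, and the construction $\Phi=\Xi'\otimes\Xi^*$ are exactly the details one needs to turn the paper's citation into a proof; the two points you flagged as delicate are indeed the only places requiring care, and you handle them correctly.
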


\begin{proof}
This follows from Corollary~\ref{cor:small-reps} and
Lemma~\ref{lem:deco}.
\end{proof}

\begin{rem}
Since $\lambda, \mu \in (\frM/[\frM,\frM])^*$ are isomorphic as
representations if and only if they are equal as linear functions,
in Theorem~\ref{thm:fd-reps=eval-reps+linear-form} we have
identified elements of $(\frM/[\frM,\frM])^*$ with isomorphism
classes of representations.
\end{rem}

\begin{rem}
Note that the evaluation representations of $M(\Spec A,\g)^\Gamma$
are the same as the evaluation representations of $M(\Spec (A/\rad
A),\g)^\Gamma$ since one evaluates at rational points.  However, the
one-dimensional representations of these two Lie algebras can be
different in general.  Thus, we do not assume that the scheme $X$ is
reduced.
\end{rem}

\begin{cor}\label{cor:perfectreps}
If $\frM$ is perfect, then the map $\Psi \mapsto \ev_\Psi$ is a
bijection between $\mathcal{E}$ and $\scS$.  In particular, this is
true if any one of the following conditions holds:
\begin{enumerate}
  \item $[\g^\Gamma,\g] = \g$, \label{perfect-cond1}
  \item $\g$ is simple, $\g^\Gamma \ne \{0\}$ is perfect and acts on $\g_\Gamma$ without
   a trivial non-zero submodule,
   or \label{perfect-cond2}
  \item $\Gamma$ acts on $\g$ by diagram automorphisms.
  \label{perfect-cond3}
\end{enumerate}
\end{cor}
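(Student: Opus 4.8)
The plan is to obtain the bijection as a formal consequence of the injectivity already proved and the surjectivity of the main theorem, and then to check that each of the three listed conditions forces $\frM$ to be perfect.

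First I would settle the bijection under the standing hypothesis that $\frM$ is perfect. Injectivity of $\Psi \mapsto \ev_\Psi$ is exactly Proposition~\ref{prop:inj} and uses nothing about perfectness. For surjectivity, note that $\frM = [\frM,\frM]$ forces $(\frM/[\frM,\frM])^* = \{0\}$, so the only linear form occurring in Theorem~\ref{thm:fd-reps=eval-reps+linear-form} is $\la = 0$, and $0 \otimes \ev_\Psi \cong \ev_\Psi$. Hence the surjection $(\frM/[\frM,\frM])^* \times \mathcal{E} \to \scS$ of that theorem restricts to a surjection $\mathcal{E} \to \scS$, $\Psi \mapsto \ev_\Psi$. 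Together with injectivity this gives that $\Psi \mapsto \ev_\Psi$ is a bijection $\mathcal{E} \to \scS$.

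It then remains to verify that $\frM$ is perfect under each of \eqref{perfect-cond1}--\eqref{perfect-cond3}. Condition \eqref{perfect-cond1} is immediate: by the last assertion of Lemma~\ref{lem:perfrm}\eqref{item:perfect-cond1}, $[\g^\Gamma,\g]=\g$ implies $\frM$ is perfect. Condition \eqref{perfect-cond2} is precisely the hypothesis of Lemma~\ref{lem:perfrm}(2), whose conclusion already includes that $\frM$ is perfect. The only case needing real work is \eqref{perfect-cond3}. Here $\g$ carries a Dynkin diagram, hence is semisimple, and the goal is to reduce to \eqref{perfect-cond1} by showing $[\g^\Gamma,\g]=\g$. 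Since $\Gamma$ permutes the simple ideals of $\g$, one decomposes $\g$ into the $\Gamma$-stable blocks $\g_O = \bigoplus_{j \in O}\g^{(j)}$ indexed by $\Gamma$-orbits $O$ of simple ideals; then $\g^\Gamma = \bigoplus_O \g_O^\Gamma$, $\g_\Gamma = \bigoplus_O (\g_O)_\Gamma$, and it suffices to prove $[\g_O^\Gamma,\g_O]=\g_O$ for each $O$. If $\Gamma$ permutes the ideals in $O$ nontrivially, $\g_O^\Gamma$ is a diagonally embedded simple Lie algebra and $(\g_O)_\Gamma$ is a direct sum of copies of its adjoint module, which has no nonzero trivial submodule. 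If $O=\{\g^{(j_0)}\}$ is a single $\Gamma$-fixed ideal, then $\Gamma$ acts on $\g^{(j_0)}$ through a group of diagram automorphisms of a simple Lie algebra, necessarily $\{1\}$, $\Z/2\Z$, $\Z/3\Z$, or $S_3$, and the classical structure theory (see \cite[Ch.~8]{Kac90}, and Example~\ref{eg:nonabelian} for the $D_4$/$S_3$ case) shows that $\g_O^\Gamma$ is semisimple while every eigenspace other than $\g_O^\Gamma$ itself is a nonzero sum of nontrivial irreducible $\g_O^\Gamma$-modules. In either case $\g_O^\Gamma$ is semisimple, hence perfect, and $(\g_O)_\Gamma$ contains no nonzero trivial $\g_O^\Gamma$-submodule, so the argument used in the proof of Lemma~\ref{lem:perfrm}(2) yields $[\g_O^\Gamma,\g_O]=\g_O$; summing over $O$ gives $[\g^\Gamma,\g]=\g$, and Lemma~\ref{lem:perfrm}\eqref{item:perfect-cond1} applies.

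The main obstacle is exactly this structure-theoretic input for \eqref{perfect-cond3}: one must know, for every simple $\g$ and every group of diagram automorphisms, that the fixed subalgebra is semisimple and that no nonidentity eigenspace carries a trivial submodule. I would cite the classification in \cite[Ch.~8]{Kac90} for this rather than reprove it; everything else in the corollary is purely formal, resting only on Proposition~\ref{prop:inj}, Theorem~\ref{thm:fd-reps=eval-reps+linear-form}, and Lemma~\ref{lem:perfrm}.
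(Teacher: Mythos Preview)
Your proof is correct and follows essentially the same line as the paper's: the bijection for perfect $\frM$ is deduced from Theorem~\ref{thm:fd-reps=eval-reps+linear-form} (which already contains the injectivity you extract separately from Proposition~\ref{prop:inj}), and conditions~\eqref{perfect-cond1} and~\eqref{perfect-cond2} are dispatched via Lemma~\ref{lem:perfrm}.

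The only substantive difference is in your treatment of~\eqref{perfect-cond3}. The paper argues more tersely: it reduces at once to $\g$ simple, observes that the image of $\Gamma$ in the diagram-automorphism group is either cyclic or $S_3$ (the latter only for $D_4$), and then simply records that $\g^\Gamma$ is simple in each case, citing \cite[\S8.2]{Kac90} for the cyclic case and Example~\ref{eg:nonabelian} for $S_3$; this feeds directly into condition~\eqref{perfect-cond2}. You instead decompose $\g$ into $\Gamma$-orbit blocks of simple ideals and verify $[\g^\Gamma,\g]=\g$ block by block, landing on condition~\eqref{perfect-cond1}. Your route is a bit longer but has the virtue of making the ``no trivial submodule in $\g_\Gamma$'' hypothesis of~\eqref{perfect-cond2} explicit, something the paper leaves to the references; both approaches ultimately rest on the same structural facts from \cite[Ch.~8]{Kac90}.
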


\begin{proof}
If $\frM$ is perfect, then $[\frM,\frM]=\frM$, and the first
statement follows immediately from
Theorem~\ref{thm:fd-reps=eval-reps+linear-form}.
Conditions~\eqref{perfect-cond1} or~\eqref{perfect-cond2} imply that
$\frM$ is perfect by Lemma~\ref{lem:perfrm}. It remains to show that
condition~\eqref{perfect-cond3} implies that $\g^\Gamma$ is perfect.
It suffices to consider the case where $\g$ is simple.  If $\Gamma$
acts on $\g$ by diagram automorphisms, then there are two
possibilities: either $\Gamma$ is a cyclic group generated by a
single diagram automorphism or $\g$ is of type $D_4$ and $\Gamma
\cong S_3$.  If $\Gamma$ is generated by a single diagram
automorphism, it is well known that $\g^\Gamma$ is a simple Lie
algebra and hence perfect (see \cite[\S8.2]{Kac90}). The case
$\Gamma \cong S_3$ was described in Example~\ref{eg:nonabelian},
where it was shown that $\g^\Gamma$ is simple as well.
\end{proof}

\begin{rem}
Note that the three conditions in Corollary~\ref{cor:perfectreps}
depend only on the action of $\Gamma$ on $\g$ and not on the scheme
$X$ or its $\Gamma$-action.
\end{rem}

\begin{rem}[Untwisted map algebras] \label{rem:untwisted}
If $\Gamma$ is trivial (or, more generally, acts trivially on $\g$),
we have $\g^\Gamma = \g$.  Thus the map $\mathcal{E} \to \scS$,
$\Psi \mapsto \ev_\Psi$, is a bijection if and only if $\g$ is
perfect. In the case when $\Gamma$ is trivial, $\g$ is a
finite-dimensional simple Lie algebra, and $A$ is finitely
generated, a similar statement has recently been made in \cite{CFK}.
\end{rem}

\begin{cor} \label{comp-red}
Suppose that all irreducible finite-dimensional representations of
$\frM$ are evaluation representations (e.g. $\frM$ is perfect) and
that all $\g^x$, $x\in X_\rat$, are semisimple. Then a
finite-dimensional $\frM$-module $V$ is completely reducible if and
only if there exists $\bx \in X_n$ for some $n\in \NN$, $n>0$, such
that $\Ker \ev_\bx \subseteq \Ann_\frM V := \{\alpha \in \frM :
\alpha \cdot v =0 \ \forall\ v \in V\}$.
\end{cor}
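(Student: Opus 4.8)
The plan is to prove the two implications of the ``if and only if'' separately; both reduce quickly to results already in hand, together with Weyl's complete reducibility theorem.

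For the implication ``$\Leftarrow$'', I would start from a set $\bx \in X_n$, $n>0$, with $\Ker \ev_\bx \subseteq \Ann_\frM V$, so that the representation $\frM \to \End_k V$ factors through $\frM/\Ker\ev_\bx$. By Corollary~\ref{cor:twisted-surjectivity} this quotient is isomorphic to $\bigoplus_{x\in\bx}\g^x$, which under our hypothesis is a finite-dimensional semisimple Lie algebra. Complete reducibility of finite-dimensional representations of semisimple Lie algebras then shows that $V$, viewed as a module over $\bigoplus_{x\in\bx}\g^x$, is completely reducible; and since the $\frM$-submodules of $V$ coincide with its $\bigl(\bigoplus_{x\in\bx}\g^x\bigr)$-submodules, $V$ is completely reducible as an $\frM$-module.

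For the implication ``$\Rightarrow$'', I would decompose the completely reducible finite-dimensional module $V$ as a finite direct sum $V=V_1\oplus\cdots\oplus V_m$ of irreducible submodules. By hypothesis each $V_j$ is an evaluation representation, hence (by Definition~\ref{def:evaluation-rep}) its action factors through $\ev_{\bx_j}$ for some finite subset $\bx_j\subseteq X_\rat$; equivalently $\Ker\ev_{\bx_j}\subseteq\Ann_\frM V_j$. Now set $T=\bigcup_{j=1}^m\Gamma\cdot\bx_j$, a finite $\Gamma$-invariant subset of $X_\rat$, and let $\bx$ consist of one point from each $\Gamma$-orbit in $T$, so that $\bx\in X_n$ with $n=|\bx|$. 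The crucial observation is that $\Ker\ev_\bx\subseteq\Ker\ev_{\bx_j}$ for every $j$: if $\alpha\in\frM$ vanishes on $\bx$ and $y\in\bx_j$, then $y=g\cdot x$ for some $x\in\bx$ and $g\in\Gamma$, whence $\alpha(y)=g\cdot\alpha(x)=0$ by the $\Gamma$-equivariance of $\alpha$ built into the definition of $\frM=M(X,\g)^\Gamma$ (Definition~\ref{def:EMA-geometric}). Therefore $\Ker\ev_\bx\subseteq\bigcap_j\Ker\ev_{\bx_j}\subseteq\bigcap_j\Ann_\frM V_j=\Ann_\frM V$, which is the desired $\bx$.

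The routine parts — factoring a representation through the kernel of its action, matching submodules under a surjection, and invoking complete reducibility over a semisimple Lie algebra — present no real difficulty; the one point requiring a little care, and the heart of the ``$\Rightarrow$'' direction, is producing a \emph{single} evaluation set $\bx$ that simultaneously carries (up to the $\Gamma$-action) the evaluation points of every irreducible constituent of $V$, which is exactly what the $\Gamma$-equivariance computation above supplies. I would also dispose at the outset of the degenerate situation in which every $V_j$ is trivial (so $\Ann_\frM V=\frM$ and the bare equality $n=0$ would otherwise be forced): in that case one may take $\bx$ to be any one-point subset of $X_\rat$, and we may assume $X_\rat\ne\emptyset$, since otherwise $\frM$ has no nontrivial evaluation representations and the statement is vacuous.
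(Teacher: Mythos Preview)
Your proposal is correct and follows essentially the same approach as the paper's proof: for ``$\Leftarrow$'' you factor through the semisimple quotient $\bigoplus_{x\in\bx}\g^x$ and invoke Weyl's theorem, and for ``$\Rightarrow$'' you take the union of the evaluation sets of the irreducible constituents and use $\Gamma$-equivariance (equivalently, $\Ker\ev_x=\Ker\ev_{g\cdot x}$) to collapse to a single $\bx\in X_n$. Your explicit handling of the degenerate case where $V$ is trivial is a small extra piece of care that the paper leaves implicit.
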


For the case of the current algebra $\frM=\g \ot \C[t]$,  this
corollary is proven in \cite[Prop.~3.9(iii)]{cha-gre}.

\begin{proof} Let $V$ be a completely reducible $\frM$-module, hence a finite direct
sum of irreducible finite-dimensional representations $V^{(i)}$. By
assumption, every $V^{(i)}$ is an evaluation representation, given
by some $\bx^{(i)} \in X_{n_i}$. Then $\Ann_\frM V = \bigcap_i
\Ann_\frM V^{(i)} \supseteq \bigcap_i \Ker \ev_{\bx^{(i)}}= \Ker
\ev_{\mathbf{y}}$ for ${\mathbf{y}} = \bigcup_i \bx^{(i)}$.  Since
$\Ker \ev_x=\Ker \ev_{g\cdot x}$, we can replace ${\mathbf{y}}$ by
some $\bx \in X_n$ satisfying $\Ker \ev_{\mathbf{y}} = \Ker
\ev_\bx$.

Conversely, if $\Ker \ev_\bx \subseteq \Ann_\frM (V)$, then the
representation of $\frM$ on $V$ factors through the semisimple Lie
algebra $\bigoplus_{x\in \bx} \g^x$ and is therefore completely
reducible.  \end{proof}

While Theorem~\ref{thm:fd-reps=eval-reps+linear-form} classifies all
the irreducible finite-dimensional representations of an arbitrary
equivariant map algebra, in case $\frM$ is not perfect it leaves
open the possibility that not all irreducible finite-dimensional
representations are evaluation representations.  We see that $\frM$
has irreducible finite-dimensional representations that are not
evaluation representations precisely when it has one-dimensional
representations that are not evaluation representations.  We
therefore turn our attention to one-dimensional evaluation
representations.  Let
\[
  \tilde X = \{x \in X_\rat : [\g^x, \g^x] \ne \g^x\}.
\]
Note that $\tilde X$ is a $\Gamma$-invariant subset of $X$ (i.e.
$\tilde X$ is a union of $\Gamma$-orbits).

\begin{lem} \label{lem:one-dim-evals}
If $\ev_\Psi$ is (the isomorphism class of) a one-dimensional
representation, then $\supp \Psi \subseteq \tilde X$.
\end{lem}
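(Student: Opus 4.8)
The plan is to unwind the definition of $\ev_\Psi$ and reduce the claim to a statement about a single point in each orbit of $\supp\Psi$. Recall from Definition~\ref{def:eval-rep} that $\ev_\Psi=\ev_\bx(\Psi(x))_{x\in\bx}$, where $\bx\subseteq\supp\Psi$ is a set of representatives for the finitely many $\Gamma$-orbits in $\supp\Psi$ and where, having fixed for each $x\in\bx$ an irreducible finite-dimensional representation $\rho_x\colon\g^x\to\End_k V_x$ in the isomorphism class $\Psi(x)$, the representation $\ev_\Psi$ acts on $\bigotimes_{x\in\bx}V_x$. Since both $\supp\Psi$ and $\tilde X$ are $\Gamma$-invariant and $\supp\Psi=\bigcup_{x\in\bx}\Gamma\cdot x$, it will suffice to show that $\bx\subseteq\tilde X$; if $\Psi$ is identically zero there is nothing to prove.

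First I would run a dimension count: if $\ev_\Psi$ is isomorphic to a one-dimensional representation, then since isomorphic representations have the same underlying dimension, $\dim_k\bigotimes_{x\in\bx}V_x=1$, and hence $\dim_k V_x=1$ for every $x\in\bx$. Thus each $\rho_x$ is a one-dimensional representation of $\g^x$. (No surjectivity statement such as Corollary~\ref{cor:twisted-surjectivity} is needed for this direction; only the invariance of dimension under isomorphism.)

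Then, for a fixed $x\in\bx$, I would use that $x\in\supp\Psi$, so $\Psi(x)\neq 0$, i.e.\ $\rho_x$ is not isomorphic to the trivial representation. By Proposition~\ref{prop:basic-facts}\eqref{item:one-dim-reps}, one-dimensional representations of $\g^x$ are precisely the elements of $(\g^x/[\g^x,\g^x])^*$, with distinct linear forms giving non-isomorphic representations; therefore $\rho_x$ corresponds to a nonzero element of $(\g^x/[\g^x,\g^x])^*$, whence $\g^x/[\g^x,\g^x]\neq 0$, i.e.\ $[\g^x,\g^x]\neq\g^x$. By definition this says $x\in\tilde X$. Hence $\bx\subseteq\tilde X$ and, by the $\Gamma$-invariance reduction above, $\supp\Psi\subseteq\tilde X$. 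The argument is essentially just bookkeeping with the definitions of $\ev_\Psi$ and of $\tilde X$; the only point requiring a little care is to read ``one-dimensional representation'' as a condition on the underlying vector space (so that the dimension count forces \emph{every} tensor factor $V_x$ to be a line) and then apply the classification of one-dimensional representations of $\g^x$, so I do not anticipate any genuine obstacle.
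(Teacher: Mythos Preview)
Your proof is correct and follows essentially the same approach as the paper's: the paper's one-sentence argument (``for $x \in X \setminus \tilde X$, $\g^x$ is perfect and thus the one-dimensional representations of $\g^x$ are trivial'') is exactly the contrapositive of your step~5, and your steps~1--4 simply make explicit the dimension count and orbit reduction that the paper leaves implicit. There is no substantive difference in method.
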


\begin{proof}
This follows easily from the fact that for $x \in X \setminus \tilde
X$ we have that $\g^x$ is perfect and thus the one-dimensional
representations of $\g^x$ are trivial.
\end{proof}

Let
\[
  \frM^d = \{\alpha \in \frM : \alpha(x) \in [\g^x,\g^x]\ \forall\ x
  \in X_\rat\} = \{\alpha \in \frM : \alpha(x) \in [\g^x,\g^x]\ \forall\
  x \in \tilde X\}.
\]
Then it is easy to see that $[\frM,\frM] \subseteq \frM^d$.  The
proof of the following lemma is straightforward.

\begin{lem}
The Lie algebra $\frM$ is perfect if and only if $\frM^d =
[\frM,\frM]$ and $\tilde X = \emptyset$.
\end{lem}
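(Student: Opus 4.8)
The plan is to prove both implications directly from the definitions, using the inclusion $[\frM,\frM] \subseteq \frM^d$ (already noted) as a structural anchor. First I would dispose of the easy direction: suppose $\frM^d = [\frM,\frM]$ and $\tilde X = \emptyset$. Since $\tilde X = \emptyset$ means $[\g^x,\g^x] = \g^x$ for every $x \in X_\rat$, the condition $\alpha(x) \in [\g^x,\g^x]$ defining $\frM^d$ becomes the vacuous condition $\alpha(x) \in \g^x$, which holds for all $\alpha \in \frM$ by Lemma~\ref{lem:restriction-image}\eqref{lem-item1:restriction-image}. Hence $\frM^d = \frM$, and combined with $\frM^d = [\frM,\frM]$ this gives $[\frM,\frM] = \frM$, i.e. $\frM$ is perfect.

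For the converse, suppose $\frM = [\frM,\frM]$. Then from $[\frM,\frM] \subseteq \frM^d \subseteq \frM$ we immediately get $\frM^d = \frM = [\frM,\frM]$, so the first equality is automatic. It remains to show $\tilde X = \emptyset$. Suppose for contradiction that there is $x_0 \in \tilde X$, so $[\g^{x_0},\g^{x_0}] \ne \g^{x_0}$; pick a linear form $0 \ne \la_0 \in (\g^{x_0}/[\g^{x_0},\g^{x_0}])^*$, regarded as a one-dimensional representation of $\g^{x_0}$. Composing with $\ev_{x_0} : \frM \to \g^{x_0}$ (surjective onto $\g^{x_0}$ by Corollary~\ref{cor:twisted-surjectivity}, taking $\bx = \{x_0\} \in X_1$) yields a nonzero one-dimensional representation of $\frM$, hence a nonzero element of $(\frM/[\frM,\frM])^*$. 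But $\frM$ perfect forces $(\frM/[\frM,\frM])^* = 0$, a contradiction. Therefore $\tilde X = \emptyset$.

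I expect no serious obstacle here — the lemma is genuinely a straightforward unwinding of definitions, which is why the excerpt flags its proof as ``straightforward.'' The only point requiring mild care is invoking surjectivity of $\ev_{x_0}$ onto $\g^{x_0}$ (rather than onto $\g$) to transport a one-dimensional representation of $\g^{x_0}$ back to $\frM$ in the converse direction; this is exactly what Corollary~\ref{cor:twisted-surjectivity} supplies. Alternatively, one could avoid the contradiction argument entirely and phrase the converse as: $\frM$ perfect $\Rightarrow$ $(\frM/[\frM,\frM])^* = 0$ $\Rightarrow$ (by Lemma~\ref{lem:one-dim-evals} and the parametrization in Section~\ref{sec:eval-reps}) every $\g^x$ with $x \in X_\rat$ has no nontrivial one-dimensional representation $\Rightarrow$ $\tilde X = \emptyset$; but the direct argument above is shorter and self-contained.
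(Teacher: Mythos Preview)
Your proof is correct and is exactly the kind of direct unwinding of definitions that the paper has in mind when it calls the proof ``straightforward.'' The paper gives no further details, and your argument---using $[\frM,\frM] \subseteq \frM^d \subseteq \frM$ together with surjectivity of $\ev_{x_0}$ from Corollary~\ref{cor:twisted-surjectivity} to produce a nontrivial one-dimensional representation---is the natural way to fill it in.
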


Now assume that $|\tilde X| < \infty$. Let $\mathbf{x}$ be a set of
representatives of the $\Gamma$-orbits comprising $\tilde X$ and
consider the composition
\begin{equation} \label{eq:tildeX-composition}
  \xymatrix{
  \frM \ar@{->>}[r]^(.35){\ev_{\mathbf{x}}} & \bigoplus_{x \in \mathbf{x}}
  \g^x \ar@{->>}[r]^(.3){\pi} & \bigoplus_{x \in \mathbf{x}}
  \frz^x,\quad \frz^x := \g^x/[\g^x,\g^x],
  }
\end{equation}
where the $x$-component of $\pi$ is the canonical projection $\g^x
\to \g^x/[\g^x,\g^x]$.  If $\g$ is reductive, then so is every
$\g^x$, and we can identify $\frz^x$ with the center $Z(\g^x)$ of
$\g^x$.  However, we will not assume that $\g$ is reductive.  The
kernel of \eqref{eq:tildeX-composition} is precisely $\frM^d$, and
thus the composition factors through $\frM/[\frM, \frM]$, yielding
the following commutative diagram:
\begin{equation} \label{eq:center-surjection}
  \xymatrix{
  \frM \ar@{->>}[r]^(.35){\ev_{\mathbf{x}}} \ar@{->>}[dr] & \bigoplus_{x \in \mathbf{x}}
  \g^x \ar@{->>}[r]^{\pi} & \bigoplus_{x \in \mathbf{x}}
  \frz^x \\
  & \frM/[\frM,\frM] \ar@{->>}[ur]_{\gamma} &
  }
\end{equation}
We then have an isomorphism of vector spaces,
\begin{equation} \label{eq:linear-form-decomp} \textstyle
  (\frM/[\frM,\frM])^* \cong (\ker \gamma)^* \oplus \left(\bigoplus_{x \in \mathbf{x}}
  \frz^x \right)^*.
\end{equation}

\begin{prop} \label{prop:one-dim-eval-reps}
If $|\tilde X| < \infty$ and $\mathbf{x}$ is a set of
representatives of the $\Gamma$-orbits comprising $\tilde X$, then
there is a natural identification
\[ \textstyle
  \left(\bigoplus_{x \in \mathbf{x}} \frz^x \right)^* \cong \{\ev_\Psi
  : \Psi \in \mathcal{E},\ \dim \ev_\Psi = 1 \}.
\]
\end{prop}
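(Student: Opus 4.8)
The plan is to show that, inside $(\frM/[\frM,\frM])^*$, both the space $\bigl(\bigoplus_{x\in\mathbf{x}}\frz^x\bigr)^*$ and the set $\{\ev_\Psi:\Psi\in\mathcal{E},\ \dim\ev_\Psi=1\}$ coincide with the image of the dual map $\gamma^*$ of the surjection $\gamma$ in diagram~\eqref{eq:center-surjection}. Since $\gamma$ is surjective, $\gamma^*$ is injective, and dualizing the short exact sequence $0\to\ker\gamma\to\frM/[\frM,\frM]\xrightarrow{\gamma}\bigoplus_{x\in\mathbf{x}}\frz^x\to0$ (which splits, being a sequence of vector spaces) realizes $\im\gamma^*$ as exactly the summand $\bigl(\bigoplus_{x\in\mathbf{x}}\frz^x\bigr)^*$ appearing in~\eqref{eq:linear-form-decomp}. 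So the asserted identification reduces to proving $\im\gamma^*=\{\ev_\Psi:\Psi\in\mathcal{E},\ \dim\ev_\Psi=1\}$, where throughout I identify the isomorphism class of a one-dimensional representation with the corresponding linear form vanishing on $[\frM,\frM]$, as in Proposition~\ref{prop:basic-facts}\eqref{item:one-dim-reps}.

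For the inclusion ``$\supseteq$'' I would start from $\Psi\in\mathcal{E}$ with $\dim\ev_\Psi=1$. By Lemma~\ref{lem:one-dim-evals}, $\supp\Psi\subseteq\tilde X$, so by Lemma~\ref{lem:twisted-eval-invariance} I may compute $\ev_\Psi$ using orbit representatives drawn from $\mathbf{x}$. Writing $\ev_\Psi\cong\bigotimes_x\ev_x\Psi(x)$ via~\eqref{eq:tensor-prod-evals}, the fact that each tensor factor is a nonzero representation forces $\dim\Psi(x)=1$ for every $x\in\mathbf{x}$; hence $\Psi(x)$, viewed as a linear form on $\g^x$, kills $[\g^x,\g^x]$ and factors as $\mu_x\circ p_x$ for a unique $\mu_x\in(\frz^x)^*$, where $p_x\colon\g^x\to\frz^x$ is the projection. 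Unwinding Definition~\ref{def:evaluation-rep} gives $\ev_\Psi(\alpha)=\sum_{x\in\mathbf{x}}\mu_x\bigl(p_x(\alpha(x))\bigr)$ for all $\alpha\in\frM$, which by the commutativity of~\eqref{eq:center-surjection} is precisely $\gamma^*\bigl((\mu_x)_{x\in\mathbf{x}}\bigr)$ evaluated on $\alpha$. For the reverse inclusion, given a tuple $(\mu_x)_{x\in\mathbf{x}}\in\bigl(\bigoplus_{x\in\mathbf{x}}\frz^x\bigr)^*$ I build $\Psi\in\mathcal{E}$ by setting $\Psi(x)=\mu_x\circ p_x$ for $x\in\mathbf{x}$ and extending $\Gamma$-equivariantly via $\Psi(g\cdot x)=g\cdot\Psi(x)$; this is unambiguous because $\Gamma_x$ acts trivially on $\g^x=\g^{\Gamma_x}$, so every representation of $\g^x$ is $\Gamma_x$-fixed, and $\supp\Psi\subseteq\Gamma\cdot\mathbf{x}=\tilde X$ is finite. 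Each $\Psi(x)$ being one-dimensional, $\ev_\Psi$ is one-dimensional and equals $\gamma^*\bigl((\mu_x)_{x\in\mathbf{x}}\bigr)$ by the same computation read backwards.

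I do not anticipate a genuine obstacle here: the content is bookkeeping. The two points that need a moment of care are (i) the identification of the coordinatewise formula $\alpha\mapsto\sum_x\mu_x(p_x(\alpha(x)))$ with the abstract expression $\gamma^*((\mu_x)_x)$, which is nothing more than the factorization $\pi\circ\ev_{\mathbf{x}}=\gamma\circ(\text{quotient map})$ recorded in~\eqref{eq:center-surjection} together with $\pi=\bigoplus_x p_x$; and (ii) verifying that $(\mu_x)_x\mapsto\Psi$ actually lands in $\mathcal{E}$, i.e.\ that the $\Gamma$-equivariant extension of the prescribed values on $\mathbf{x}$ is well defined, which follows from $\Gamma_x$ fixing $\g^x$ pointwise. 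Combining the two inclusions shows the two subspaces agree, which is the asserted natural identification.
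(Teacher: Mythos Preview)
Your proof is correct and follows essentially the same approach as the paper: the paper simply associates to $\lambda\in\bigl(\bigoplus_{x\in\mathbf{x}}\frz^x\bigr)^*$ the composition $\lambda\circ\pi\circ\ev_{\mathbf{x}}$ and then invokes Lemma~\ref{lem:one-dim-evals} to assert bijectivity, whereas you have carefully spelled out both inclusions, verified well-definedness of the $\Gamma$-equivariant extension, and packaged the identification via $\gamma^*$ and the decomposition~\eqref{eq:linear-form-decomp}. The extra framing through $\im\gamma^*$ is not strictly necessary but is harmless and makes the link to~\eqref{eq:linear-form-decomp} explicit.
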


\begin{proof}
Choose $\lambda \in \left(\bigoplus_{x \in \mathbf{x}} \frz^x
\right)^*$. To $\lambda$ we associate the evaluation representation
\[
  \xymatrix{
  \frM \ar@{->>}[r]^(.35){\ev_{\mathbf{x}}} & \bigoplus_{x \in \mathbf{x}}
  \g^x \ar@{->>}[r]^(.47){\pi} & \bigoplus_{x \in \mathbf{x}}
  \frz^x \ar[r]^(.6){\lambda} & k.
  }
\]
By Lemma~\ref{lem:one-dim-evals}, this gives the desired bijective
correspondence.
\end{proof}

We can now refine Theorem~\ref{thm:fd-reps=eval-reps+linear-form} as
follows.

\begin{theo} \label{thm:finite-tilde-X}
Suppose $\Gamma$ is a finite group acting on an affine scheme $X$
and a finite-dimensional Lie algebra $\g$, and assume that $|\tilde
X| < \infty$. If $\gamma$ is defined as in
\eqref{eq:center-surjection}, then the map
\[
  (\lambda, \Psi) \mapsto \lambda \otimes \ev_\Psi,\quad \lambda \in
  (\ker \gamma)^*,\quad \Psi \in \mathcal{E}
\]
is a bijection between $(\ker \gamma)^* \times \mathcal{E}$ and
$\scS$.
\end{theo}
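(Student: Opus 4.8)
The plan is to combine Theorem~\ref{thm:fd-reps=eval-reps+linear-form} with the splitting \eqref{eq:linear-form-decomp} of $(\frM/[\frM,\frM])^*$ and the identification in Proposition~\ref{prop:one-dim-eval-reps}. By Theorem~\ref{thm:fd-reps=eval-reps+linear-form}, the map $(\mu,\Psi)\mapsto \mu\otimes\ev_\Psi$ from $(\frM/[\frM,\frM])^*\times\mathcal{E}$ to $\scS$ is already surjective, so surjectivity of the restricted map $(\lambda,\Psi)\mapsto\lambda\otimes\ev_\Psi$ with $\lambda\in(\ker\gamma)^*$ will follow once I show that every pair $(\mu,\Psi)$ can be replaced, without changing the isomorphism class of $\mu\otimes\ev_\Psi$, by a pair whose first entry lies in $(\ker\gamma)^*$. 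This is where the decomposition \eqref{eq:linear-form-decomp} enters: write $\mu = \lambda + \nu$ with $\lambda\in(\ker\gamma)^*$ and $\nu\in\bigl(\bigoplus_{x\in\mathbf{x}}\frz^x\bigr)^*$. By Proposition~\ref{prop:one-dim-eval-reps} there is $\Phi\in\mathcal{E}$ with $\dim\ev_\Phi = 1$ and $\ev_\Phi = \nu$ (viewing the one-dimensional representation $\ev_\Phi$ as the linear form it determines on $\frM/[\frM,\frM]$, which by construction factors through $\gamma$ and so is exactly the image of $\nu$ under $\bigl(\bigoplus\frz^x\bigr)^*\hookrightarrow(\frM/[\frM,\frM])^*$). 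Then $\mu\otimes\ev_\Psi \cong (\lambda+\ev_\Phi)\otimes\ev_\Psi \cong \lambda\otimes(\ev_\Phi\otimes\ev_\Psi) = \lambda\otimes\ev_{\Psi\otimes\Phi}$ by \eqref{eq:tensor-prod-evals} and the definition of $\Psi\otimes\Phi$. Since $\Psi\otimes\Phi\in\mathcal{E}$, this exhibits the class as $\lambda\otimes\ev_{\Psi'}$ with $\lambda\in(\ker\gamma)^*$, proving surjectivity.

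For injectivity, suppose $\lambda\otimes\ev_\Psi = \lambda'\otimes\ev_{\Psi'}$ with $\lambda,\lambda'\in(\ker\gamma)^*$ and $\Psi,\Psi'\in\mathcal{E}$. By the second part of Theorem~\ref{thm:fd-reps=eval-reps+linear-form}, there exists $\Phi\in\mathcal{E}$ with $\dim\ev_\Phi=1$, $\lambda' = \lambda - \ev_\Phi$, and $\ev_{\Psi'} = \ev_{\Psi\otimes\Phi}$. The point is that $\Phi$ must be trivial. Indeed, $\ev_\Phi$, as a linear form on $\frM/[\frM,\frM]$, lies in the summand $\bigl(\bigoplus_{x\in\mathbf{x}}\frz^x\bigr)^*$ of \eqref{eq:linear-form-decomp} by Proposition~\ref{prop:one-dim-eval-reps} (every one-dimensional evaluation representation is supported on $\tilde X$ by Lemma~\ref{lem:one-dim-evals} and factors through $\gamma$). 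On the other hand, $\ev_\Phi = \lambda - \lambda' \in(\ker\gamma)^*$, the complementary summand. Since the sum in \eqref{eq:linear-form-decomp} is direct, $\ev_\Phi = 0$, hence $\lambda' = \lambda$; and $\ev_\Phi = 0$ forces $\Phi$ to have support in $\tilde X$ with each $\Phi(x)$ trivial, i.e. $\Phi \equiv 0$, so $\ev_{\Psi'} = \ev_{\Psi\otimes\Phi} = \ev_\Psi$, which by Proposition~\ref{prop:inj} gives $\Psi' = \Psi$.

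The main obstacle I anticipate is the bookkeeping in identifying $\ev_\Phi$, a priori an element of $\scS$, with a genuine linear form in the correct summand of \eqref{eq:linear-form-decomp}: one must check that the one-dimensional evaluation representation attached to $\nu$ in Proposition~\ref{prop:one-dim-eval-reps} really does pull back, under the vertical quotient $\frM\twoheadrightarrow\frM/[\frM,\frM]$ and the map $\gamma$ of \eqref{eq:center-surjection}, to the form $\nu\in\bigl(\bigoplus\frz^x\bigr)^*$ sitting inside $(\frM/[\frM,\frM])^*$ via \eqref{eq:linear-form-decomp}. This is essentially immediate from the construction in the proof of Proposition~\ref{prop:one-dim-eval-reps}, where the representation associated to $\lambda$ is exactly $\lambda\circ\pi\circ\ev_{\mathbf{x}}$, which factors through $\gamma$ by \eqref{eq:center-surjection}; but it is the one place where the two parameterizations must be matched up carefully. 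Everything else is a formal consequence of Theorem~\ref{thm:fd-reps=eval-reps+linear-form}, the direct-sum decomposition \eqref{eq:linear-form-decomp}, and the injectivity already recorded in Propositions~\ref{prop:inj} and~\ref{prop:one-dim-eval-reps}.
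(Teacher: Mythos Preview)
Your argument is correct and follows exactly the route the paper indicates: the paper's proof is the single sentence ``This follows from Theorem~\ref{thm:fd-reps=eval-reps+linear-form}, \eqref{eq:linear-form-decomp} and Proposition~\ref{prop:one-dim-eval-reps},'' and you have supplied precisely the details behind that sentence, using the direct-sum decomposition to split off a one-dimensional evaluation piece for surjectivity and to force $\ev_\Phi=0$ for injectivity. The only cosmetic point is the phrasing ``$\ev_\Phi=0$ forces $\Phi$ to have support in $\tilde X$ with each $\Phi(x)$ trivial''; it is cleaner to invoke Proposition~\ref{prop:inj} directly to conclude $\Phi\equiv 0$ from the triviality of $\ev_\Phi$.
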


\begin{proof}
This follows from Theorem~\ref{thm:fd-reps=eval-reps+linear-form},
\eqref{eq:linear-form-decomp} and
Proposition~\ref{prop:one-dim-eval-reps}.
\end{proof}

\begin{cor}\label{cor:non-ss-reps}
Assume $|\tilde X| < \infty$.  Then $[\frM,\frM]=\frM^d$ if and only
if all irreducible finite-dimensional representations are evaluation
representations.
\end{cor}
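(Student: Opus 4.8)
The plan is to derive the corollary from Theorem~\ref{thm:fd-reps=eval-reps+linear-form} and Proposition~\ref{prop:one-dim-eval-reps} by a short chain of equivalences, reducing everything to the question of whether the surjection $\gamma$ of \eqref{eq:center-surjection} is injective. First I would observe that, by Theorem~\ref{thm:fd-reps=eval-reps+linear-form}, every irreducible finite-dimensional representation of $\frM$ is of the form $\lambda \otimes \ev_\Psi$ with $\lambda \in (\frM/[\frM,\frM])^*$ and $\Psi \in \mathcal{E}$, and that such a representation is itself an evaluation representation if and only if $\lambda$ is a one-dimensional evaluation representation. The ``if'' direction is clear because $\ev_\Phi \otimes \ev_\Psi \cong \ev_{\Psi \otimes \Phi}$ (evaluating at a point sends a tensor product of $\g^x$-modules to the tensor product of the corresponding evaluation modules, as in \eqref{eq:tensor-prod-evals} and Lemma~\ref{lem:twisted-eval-invariance}); the ``only if'' direction uses the uniqueness clause of Theorem~\ref{thm:fd-reps=eval-reps+linear-form}: if $\lambda \otimes \ev_\Psi \cong \ev_{\Psi'} = 0 \otimes \ev_{\Psi'}$, then there is $\Phi \in \mathcal{E}$ with $\dim \ev_\Phi = 1$ and $0 = \lambda - \ev_\Phi$, i.e.\ $\lambda = \ev_\Phi$. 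Thus \emph{all irreducible finite-dimensional representations of $\frM$ are evaluation representations} is equivalent to \emph{every $\lambda \in (\frM/[\frM,\frM])^*$ is a one-dimensional evaluation representation}.

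Next I would feed in Proposition~\ref{prop:one-dim-eval-reps}, which applies since $|\tilde X| < \infty$: the one-dimensional evaluation representations are exactly the linear forms on $\frM$ of the shape $\mu \circ \pi \circ \ev_{\mathbf{x}}$ for $\mu \in \big(\bigoplus_{x \in \mathbf{x}} \frz^x\big)^*$, so under the identification of one-dimensional representations with $(\frM/[\frM,\frM])^*$ they constitute the subspace $\gamma^*\big(\big(\bigoplus_{x \in \mathbf{x}} \frz^x\big)^*\big)$. Since $\gamma$ is surjective, $\gamma^*$ is injective with image $\{\mu \in (\frM/[\frM,\frM])^* : \mu|_{\ker \gamma} = 0\}$; this is precisely the $\big(\bigoplus_{x \in \mathbf{x}} \frz^x\big)^*$-summand appearing in the decomposition \eqref{eq:linear-form-decomp}, and it fills up all of $(\frM/[\frM,\frM])^*$ if and only if $\ker \gamma = 0$. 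Combining with the previous paragraph, all irreducible finite-dimensional representations of $\frM$ are evaluation representations if and only if $\ker \gamma = 0$.

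Finally I would identify $\ker \gamma$. By construction the kernel of the composition \eqref{eq:tildeX-composition} is $\frM^d$, and $\gamma$ is the map this composition induces on $\frM/[\frM,\frM]$ (using $[\frM,\frM] \subseteq \frM^d$), so $\ker \gamma = \frM^d/[\frM,\frM]$; hence $\ker \gamma = 0$ if and only if $\frM^d = [\frM,\frM]$. Together with the two equivalences above this proves the corollary. I do not expect any serious obstacle: the one point that needs care is the bookkeeping in the first paragraph, namely that ``$\lambda \otimes \ev_\Psi$ is an evaluation representation'' is genuinely equivalent to ``$\lambda$ is a one-dimensional evaluation representation'', which is exactly where the uniqueness statement of Theorem~\ref{thm:fd-reps=eval-reps+linear-form} must be invoked rather than merely the existence statement.
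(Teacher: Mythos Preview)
Your proposal is correct and follows essentially the same route as the paper. The paper's proof is a two-liner invoking Theorem~\ref{thm:finite-tilde-X} to say that all irreducible finite-dimensional representations are evaluation representations if and only if $\gamma$ is injective, and then reads off $\ker\gamma = \frM^d/[\frM,\frM]$ from diagram~\eqref{eq:center-surjection}; you bypass Theorem~\ref{thm:finite-tilde-X} and instead unpack its ingredients (Theorem~\ref{thm:fd-reps=eval-reps+linear-form}, Proposition~\ref{prop:one-dim-eval-reps}, and~\eqref{eq:linear-form-decomp}) directly, but the substantive reduction---to the vanishing of $\ker\gamma$---is identical.
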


\begin{proof}
By Theorem~\ref{thm:finite-tilde-X}, all irreducible
finite-dimensional representations are evaluation representations if
and only if $\gamma$ is injective (and hence an isomorphism, since
it is surjective).  Then the result follows from the commutative
diagram \eqref{eq:center-surjection} since the kernel of
\eqref{eq:tildeX-composition} is $\frM^d$.
\end{proof}

\begin{rem}
Note that if $\g$ is perfect and $\Gamma$ acts on $X$ in such a way
that there are only a finite number of points of $X$ that have a
non-trivial stabilizer, then $|\tilde X| < \infty$, and so the
hypotheses of Theorem \ref{thm:finite-tilde-X} are satisfied.
\end{rem}

\begin{rem}
In Section~\ref{subsec:onsager} we will see that the Onsager algebra
is an equivariant map algebra which is not perfect but for which
$\gamma$ is injective, and thus  all irreducible finite-dimensional
representations are nonetheless evaluation representations.
\end{rem}

Having considered the case when all irreducible finite-dimensional
representations are evaluation representations, we now examine the
opposite situation: Equivariant map algebras for which there exist
irreducible finite-dimensional representations that are not
evaluation representations.

\begin{prop} \label{prop:infinite-tildeX}
Suppose $X$ is a Noetherian affine scheme and $\tilde X$ is
infinite. Then\/ $\frM$ has a one-dimensional representation that is
not an evaluation representation.
\end{prop}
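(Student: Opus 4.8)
The plan is to manufacture a one–dimensional representation out of a first–order ``jet'' at a suitably chosen point of $\tilde X$, exploiting the Noetherian hypothesis to guarantee that such a jet direction exists. The starting point is the remark that, by Lemma~\ref{lem:one-dim-evals} together with the surjectivity of $\ev_\bx$ (Corollary~\ref{cor:twisted-surjectivity}), a one–dimensional representation $\lambda\in(\frM/[\frM,\frM])^*$ is an evaluation representation \emph{if and only if} it vanishes on $\frM^d_S:=\{\alpha\in\frM:\alpha(x)\in[\g^x,\g^x]\ \forall\,x\in S\}$ for some finite $\Gamma$–invariant subset $S\subseteq\tilde X$ (so that $\frM^d=\bigcap_S\frM^d_S$). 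Thus it suffices to exhibit a linear form $\lambda$ on $\frM$ with $\lambda([\frM,\frM])=0$ but $\lambda|_{\frM^d_S}\ne0$ for every such $S$. Since $\Gamma$ is finite, $\tilde X=\bigsqcup_H\{x\in X_\rat:\Gamma_x=H\}$ over the finitely many subgroups $H\le\Gamma$ with $\g^H$ not perfect, so some such $H$ has $\tilde X_H:=\{x\in X_\rat:\Gamma_x=H\}$ infinite; put $\mathfrak h=\g^H$ and $\bar{\mathfrak h}=\mathfrak h/[\mathfrak h,\mathfrak h]\ne 0$, noting $\g^x=\mathfrak h$ for all $x\in\tilde X_H$. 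The closed subscheme $X^H$ then contains the infinite set $\tilde X_H$; since a zero–dimensional Noetherian scheme has only finitely many points, $\overline{\tilde X_H}$ has an irreducible component $Z$ of dimension $\ge 1$, which necessarily meets $\tilde X_H$. Fix $x_0\in\tilde X_H\cap Z$: it is a $k$–rational, hence closed, point of the positive–dimensional irreducible scheme $Z$, so $\mathfrak m_{x_0}\ne\mathfrak m_{x_0}^2$ in $A_Z$ by Nakayama, i.e.\ the cotangent space $\mathfrak m_{x_0}/\mathfrak m_{x_0}^2$ of $Z$ at $x_0$ is nonzero; also $H$ (which fixes $Z$ pointwise) acts trivially on $A_Z$.

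Next I would set $R=A_Z/\mathfrak m_{x_0}^2=k\oplus(\mathfrak m_{x_0}/\mathfrak m_{x_0}^2)$, a local quotient ring of $A$ on which $H$ acts trivially, and consider the closed $\Gamma$–invariant subscheme $W=\Gamma\cdot\Spec R\subseteq X$. Because $H=\Gamma_{x_0}$, the $\Gamma$–translates of the fat point $\Spec R$ are disjoint, so $M(W,\g)^\Gamma\cong(\g\otimes R)^H=\mathfrak h\otimes R$, and Lemma~\ref{lem:restriction-image} gives a surjection $j\colon\frM\twoheadrightarrow\mathfrak h\otimes R$ sending $\alpha$ to the $1$–jet of $\alpha$ at $x_0$ along $Z$ (its degree–$0$ component being $\alpha(x_0)$). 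Composing with the Lie algebra homomorphism $\mathfrak h\otimes R\twoheadrightarrow\bar{\mathfrak h}\otimes R$ onto the \emph{abelian} algebra $\bar{\mathfrak h}\otimes R=\bar{\mathfrak h}\oplus(\bar{\mathfrak h}\otimes\mathfrak m_{x_0}/\mathfrak m_{x_0}^2)$ yields $\bar j\colon\frM\twoheadrightarrow\bar{\mathfrak h}\otimes R$ with $\bar j([\frM,\frM])=0$. Finally, choose $0\ne\mu\in\bar{\mathfrak h}^*$ and $0\ne\xi\in(\mathfrak m_{x_0}/\mathfrak m_{x_0}^2)^*$ and define $\phi\in(\bar{\mathfrak h}\otimes R)^*$ to be zero on $\bar{\mathfrak h}$ and equal to $\mu\otimes\xi$ on $\bar{\mathfrak h}\otimes\mathfrak m_{x_0}/\mathfrak m_{x_0}^2$; set $\lambda=\phi\circ\bar j$. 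Since $\bar j$ is onto and $\phi\ne0$, $\lambda$ is a nonzero linear form killing $[\frM,\frM]$, i.e.\ a nontrivial one–dimensional representation of $\frM$.

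It remains to check $\lambda$ is not an evaluation representation, and this is the step where the geometry does the work. If it were, by the first paragraph $\lambda$ would vanish on some $\frM^d_S$ with $S\subseteq\tilde X$ finite and $\Gamma$–invariant, and it is enough to show $\bar j(\frM^d_S)$ contains the entire degree–$1$ subspace $\bar{\mathfrak h}\otimes\mathfrak m_{x_0}/\mathfrak m_{x_0}^2$, contradicting $\phi\ne0$ there. For this, apply Lemma~\ref{lem:restriction-image} to the closed $\Gamma$–invariant subscheme $W\cup S$ (the subschemes $W$ and $S\setminus\Gamma x_0$ have disjoint supports): given any $\partial\in\mathfrak h\otimes\mathfrak m_{x_0}/\mathfrak m_{x_0}^2$, lift the element of $M(W\cup S,\g)^\Gamma$ whose $\Spec R$–component is $0\oplus\partial$ and whose component on $S\setminus\Gamma x_0$ is $0$ to an $\alpha\in\frM$; then $\alpha$ vanishes at every point of $S$ (at the points of $\Gamma x_0\cap S$ because the degree–$0$ part of its jet is $0$, at the rest by construction), so $\alpha\in\frM^d_S$, while the degree–$1$ component of $\bar j(\alpha)$ is the image of $\partial$ in $\bar{\mathfrak h}\otimes\mathfrak m_{x_0}/\mathfrak m_{x_0}^2$, which is arbitrary. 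I expect the main obstacle to be precisely this geometric input: turning ``$X$ Noetherian, $\tilde X$ infinite'' into a rational point $x_0\in\tilde X$ lying on a positive–dimensional irreducible subscheme $Z\subseteq X^H$ along which the relevant algebra $\g^H$ is constant, so that $\mathfrak m_{x_0}/\mathfrak m_{x_0}^2\ne0$ supplies a jet direction that no finite set of point–evaluations can detect (morally because $\bigcap_{x\in F}\mathfrak m_x\not\subseteq\mathfrak m_{x_0}^2$ for any finite $F$), together with the routine but somewhat fiddly bookkeeping of the $\Gamma$–action — in particular the disjointness of the $\Gamma$–translates of $\Spec R$ and the identification $M(W,\g)^\Gamma\cong\mathfrak h\otimes R$. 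As a sanity check, when $\tilde X$ is finite this construction collapses (no positive–dimensional $Z$), consistent with Corollary~\ref{cor:non-ss-reps}.
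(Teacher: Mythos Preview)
Your proof is correct and takes a genuinely different route from the paper's. Both arguments begin the same way: choose a subgroup $H\le\Gamma$ with $\g^H$ not perfect and $\{x\in X_\rat:\Gamma_x=H\}$ infinite, and then locate a positive-dimensional irreducible closed subset of $X^H$ meeting this set. After that the two diverge sharply. The paper produces, via Noether normalization, an element $f\in A^\Gamma$ with $\{1,f,f^2,\dots\}$ linearly independent on that component, together with an $\alpha\in\frM$ for which $\{\alpha f^m:m\ge0\}$ is linearly independent modulo $[\frM,\frM]$; it then runs a \emph{transcendence} argument, choosing $\lambda$ so that the values $\lambda(\alpha f^m)$ (e.g.\ $m$-th roots of $2$) cannot all lie in the finitely generated $\mathbb{Q}$-subalgebra of $k$ that any fixed evaluation representation would force them into. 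Your argument instead extracts a single \emph{tangent direction}: a nonzero cotangent vector at a well-chosen rational point $x_0$, packaged as the nilpotent thickening $\Spec R$ with $R=A_Z/\mathfrak m_{x_0}^2$, and builds $\lambda$ as a first-order jet via $\frM\twoheadrightarrow\mathfrak h\otimes R\twoheadrightarrow\bar{\mathfrak h}\otimes R$. The contradiction with $\lambda$ being an evaluation representation is then purely algebro-geometric---no finite collection of point-evaluations detects a jet---rather than arithmetic.

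Your approach is cleaner and more conceptual, and it has a genuine technical advantage: it uses only the Noetherian hypothesis (through Nakayama and the finiteness of zero-dimensional Noetherian schemes), whereas the paper's invocation of Noether normalization tacitly appeals to finite generation of $A^\Gamma$ over $k$, which is not part of the stated hypotheses. On the other hand, the paper's method incidentally proves more, namely that $\frM/[\frM,\frM]$ is infinite-dimensional, while yours yields (as written) a single non-evaluation one-dimensional representation. The bookkeeping you flag as ``fiddly''---that $H$ acts trivially on $A_Z$ (hence on $R$) because $A_Z$ is a quotient of $A/I_H$, that the $\Gamma$-translates of $\Spec R$ are disjoint since $\Gamma_{x_0}=H$, and that $A\to A_W\times A_{S\setminus\Gamma x_0}$ is surjective by comaximality of ideals with disjoint zero-loci---all goes through without difficulty.
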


\begin{proof}
We first set up some notation for one-dimensional evaluation
representations.  Let $\bx \in X_n$ and let $\rho_x : \frg^x \to
\End_k(V_x)$, $x \in \bx$, be representations such that $\ev_\bx
(\rho_x)_{x \in \bx}$ is a one-dimensional representation.
Necessarily $\dim V_x = 1$, say $V_x = k v_x$, so $V= \bigotimes_{x
\in \bx} V_x = k \bv$ for $\bv = \bigotimes_{x \in \bx} v_x$. We can
assume that all $\rho_x \ne 0$, whence $\bx \subseteq \tilde X$. Let
$\tilde \rho_x \in (\g^x)^*$ be defined by $\rho_x(u) (v_x) = \tilde
\rho_x(u) v_x$ for $u\in \g^x$. For $\al= \sum_i u_i \ot f_i\in
\frM$ we then have
\[
  \big( (\ev_\bx (\rho_x)_{x \in \bx})(\al )\big) (\bv)
  = \left(\sum_{x \in \bx,\, i} \tilde \rho_x(u_i) f_i(x)\right)  \bv.
\]
Suppose that there exist $\al \in \frM$ and $f \in A^\Gamma$ such
that
\[
   \{ \al f^m : m\in \NN\} \mbox{ is linearly independent and }
  \Span \{ \al f^m : m\in \NN\} \cap [\frM,\frM] =
  0. \eqno{(*)}
\]
Then there exists $\la\in \frM^*$ such that $\la([\frM, \frM])=0$
and $\la(\al f^m)\in k$, $m \ge 1$, is a root of an irreducible
rational polynomial $p_m$ of degree $m$, for example $p_m(z) = z^m
-2$. Now suppose that the corresponding one-dimensional
representation is an evaluation representation. Writing $\al=\sum_i
u_i \ot f_i$ with $u_i \in \g$ and $f_i \in A$, we get from the
equation above
\[
   \la(\al f^m) = \sum_{x \in \bx,\, i} \tilde \rho_x(u_i) (f_i f^m)(x)
      = \sum_{x \in \bx,\, i} \tilde \rho_x(u_i) f_i(x) f(x)^m.
\]
But this is a contradiction since the elements $\tilde \rho_x(u_i)
f_i(x) f(x)^m\in k$ all lie in the $\mathbb{Q}$-subalgebra of $k$
generated by the finitely many elements $\tilde \rho_x(u_i), f_i(x),
f(x)$ of $k$, while the elements $\la(\alpha f^m)$, $m\in \NN$, do
not lie in such a subalgebra.

We will now construct $\al \in \frM$ and $f\in A^\Gamma$ satisfying
($*$). For a subgroup $H$ of $\Gamma$, let
\[
  X_H = \{x \in X_\rat : \Gamma_x = H\}.
\] Since
\[
  \tilde X = \bigcup_{H\, :\, \g^H \ne [\g^H,\g^H]} X_H,
\]
there exists a subgroup $H$ of $\Gamma$, with $\g^H \ne
[\g^H,\g^H]$, such that $X_H$ is infinite.  Observe that $X_H = X^H
\setminus \bigcup_{K\supsetneq H} X^K$ is open in the closed subset
$X^H$ and that $X^H$ is a Noetherian affine scheme since $X$ is.
Because $X^H$ has only finitely many irreducible components, there
exists an irreducible component $Y$ of $X^H$ such that $\tilde Y = Y
\cap X_H$ is infinite.

Let $R : A_X^\Gamma \to A_Y$ be the restriction map. Choose an
infinite set $\{y_1, y_2, \dots\}$ of points of $Y$, no two of which
are in the same $\Gamma$-orbit. It follows as in the proof of
Corollary~\ref{cor:twisted-surjectivity} that for all $j \in
\mathbb{N}$ there exists $f_j \in A_X^\Gamma$ such that $f_j(y_j)\ne
0$ and $f_j(y_i) = 0$ for $i<j$. Since the set $\{R(f_j) : j\in
\NN\}$ is linearly independent, the image of $R$ is
infinite-dimensional. Because $A_X^\Gamma$ is finitely generated, so
is the image of $R$. Therefore, by the Noether normalization lemma,
this image contains an element $f_Y$ such that the set
$\{1,f_Y,f_Y^2,\dots,\}$ is linearly independent. Choose $f \in
R^{-1}(f_Y)$.  It follows that $\{1,f,f^2,\dots,\}$ is also linearly
independent. Since $\tilde Y$ is open in the irreducible $Y$ we can
choose $\tilde y \in \tilde Y$ such that $f(\tilde y) \ne 0$.

Let $\{u_i\}_{i=1}^l$ be a basis of $[\g^H, \g^H]$, and complete it
to a basis $\{u_i\}_{i=1}^m$ of $\g^H$.  Then complete this to a
basis $\{u_i\}_{i=1}^n$ of $\g$. Again by
Corollary~\ref{cor:twisted-surjectivity} there exists $\al\in \frM$
such that $\al(\tilde y) = u_m$. Observe that $\al_Y = \al|_Y$ can
be written in the form $\al_Y = \sum_{i=1}^m u_i \otimes f_i$ for
some $f_i \in A_Y$. Since $\al(\tilde y) = u_m$, we have $f_m(\tilde
y) = 1$. Therefore $f_m(y) \ne 0$ for all $y$ in some dense subset
of $Y$. Now suppose
\[
  \sum_{j=0}^\infty d_j \alpha f^j \in [\frM,\frM]
\]
for some $d_j \in k$ with $d_j=0$ for all but finitely many $j$. We
then have
\[
  \left. \left( \sum_{j=0}^\infty d_j \alpha f^j \right) \right|_Y
  = \alpha_Y \sum_{j=0}^\infty d_j f_Y^j = \left( \sum_{i=1}^m u_i
  \otimes f_i \right) \sum_{j=0}^\infty d_j f_Y^j = \sum_{i=1}^m u_i
  \otimes \left( f_i \sum_{j=0}^\infty d_j f_Y^j \right).
\]
Since $f_m(y)\ne 0$ for $y$ in a dense subset of $Y$, we must have
$\sum_{j=0}^\infty d_j f_Y^j=0$. Because $\{1, f_Y, f_Y^2,\dots\}$
is linearly independent, we must have $d_j = 0$ for all $j$.
Therefore
\[
  \Span \{ \al f^m : m\in \NN\} \cap [\frM,\frM] =
0.
\]
Now observe that the preceding argument also shows that $\{\alpha,
\alpha f, \alpha f^2, \dots\}$ is linearly independent: Suppose
$\sum_{j=1}^\infty c_j \alpha f^j=0$ for some $c_j \in k$ with $c_j
= 0$ for all but finitely many $j$. Then, since $0\in [\frM, \frM]$,
all $c_j = 0$ by what we have just shown. Thus ($*$) holds, finishing
the proof of the proposition. \end{proof}

\begin{example} \label{ex:infinite-tildeX} We give an example in which
the assumptions of Proposition~\ref{prop:infinite-tildeX} are
fulfilled. Let $\Gamma=\{1, \si\}$ be the group of order two acting
on $\g=\lsl_2(k)$ by the Chevalley involution with respect to some
$\lsl_2$-triple and let $X$ be the affine space $k^2$ with $\si$
acting on $X$ by fixing the first coordinate of points in $X$ while
multiplying the second by $-1$.  For points $(x_1,x_2) \in X$ with
$x_2 \ne 0$, the isotropy subalgebra $\g^x = \g$, while for $(x_1,0)
\in X$, the subalgebra $\g^x$ is the fixed point subalgebra of
$\sigma$, which is one-dimensional.  Therefore $\tilde X = \{(x_1,0)
\in X\}$ is infinite.
\end{example}

Proposition~\ref{prop:infinite-tildeX} says that when $X$ is an
affine variety, a necessary condition for all irreducible
finite-dimensional representations to be evaluation representations
is that $\tilde X$ be finite.  We now show that this condition is
not sufficient.

\begin{example}
Let $\g = \mathfrak{sl}_2(k)$ and
\[
  X = Z(y^2-x^3) =\{(y,x) : y^2=x^3\} \subseteq k^2,
\]
an affine variety.  Then $A = k[y,x]/(y^2-x^3)$.  Let $\Gamma =
\langle \sigma \rangle = \Z_2$ act on $k^2$ by $\sigma \cdot y = -y$
and $\sigma \cdot x = x$.  Since this action fixes $y^2-x^3$, we
have an induced action of $\Gamma$ on $X$ and the only fixed point
is the origin. In particular, $\tilde X$ only contains the origin
and thus is finite.  We let $\sigma$ act on $\g$ by a Chevalley
involution. We have
\[
  A_1 = y k[y^2,x]/(y^2-x^3),
\]
and so
\[
  A_1^2 = y^2 k[y^2,x]/(y^2-x^3) \cong x^3 k[x],
\]
while
\[
  A_0 = A^\Gamma = k[y^2,x]/(y^2-x^3) \cong k[x].
\]
Recall that $\g_0$ is one-dimensional.  Then
\[
  [\frM,\frM] = ([\g_0,\g_0]\otimes A_0 + \g_0 \otimes
  A_1^2) \oplus (\g_1 \otimes A_1) = \left( \g_0 \otimes x^3
  k[x] \right) \oplus \left( \g_1 \otimes A_1 \right),
\]
while
\[
  \frM^d = \left( \g_0 \otimes x k[x] \right) \oplus \left(
  \g_1 \otimes A_1 \right).
\]
Therefore
\[
  \frM^d/[\frM,\frM] \cong x k[x]/(x^3),
\]
and by Theorem~\ref{thm:finite-tilde-X}, $\frM$ has irreducible
finite-dimensional representations that are not evaluation
representations.
\end{example}


\section{Applications} \label{sec:applications}

In this section we use our classification to describe the
irreducible finite-dimensional representations of certain
equivariant map algebras. The classification of these
representations for the multiloop, tetrahedron, or Onsager algebra
$\On(\mathfrak{sl}_2)$ by the results obtained in
Section~\ref{sec:classification} provide a simplified and unified
interpretation of results previously obtained.  For example, the
classification of the irreducible finite-dimensional representations
of $L^\sigma(\lie g)$ (as found in \cite{CPweyl} and \cite{CFS}) via
Drinfeld polynomials requires two distinct treatments for the
untwisted ($\sigma = \Id$) and twisted ($\sigma \neq \Id$) cases --
for these twisted cases, the twisted loops $L^\sigma(\lie g)$
with $\lie g$ of type $A_{2n}$ require special attention.  The
classification resulting from our approach, however, is uniform.  As
we will see, the identification of isomorphism classes of
representations with equivariant maps $X_\rat \to \mathcal{R}_X$
also provides a simple explanation for many of the technical
conditions appearing in previous classifications.

We first note that from Remark \ref{rem:untwisted} we immediately
obtain the classification of irreducible finite-dimensional
representations of the current algebras (Example~\ref{eg:current}),
of the untwisted loop and multiloop algebras
(Example~\ref{eg:multiloop}), and of the $n$-point algebras
$M(X,\g)$, where $X = \mathbb{P}^1 \backslash \left\{ c_1, \ldots,
c_n \right\}$. This includes the tetrahedron algebra, which is
isomorphic to the three-point $\mathfrak{sl}_2$ loop algebra
$\mathfrak{sl}_2 \otimes \mathbb{C}\left[ t, t^{-1},
(t-1)^{-1}\right]$ (Example~\ref{eg:threepoint}), and we recover the
classification found in \cite{Ha}.  In particular, we easily recover
in all of these cases the fact that for $\bx = \{x_1, \ldots, x_l\}
\subseteq X$ and irreducible representations $V_1, \ldots, V_l$ of
$\g$, the evaluation representation $\ev_{\bx}(\otimes V_i)$ is
irreducible if and only if $x_i \neq x_j$ for $i \neq j$.

\subsection{Multiloop algebras}

If $\M = M(\g,\sigma_1,\dots,\sigma_n,m_1,\dots,m_n)$ is a multiloop
algebra (Example~\ref{eg:twistedmultiloop}), then $\M$ is perfect
 ($\M$ is an iterated loop algebra; see {\cite[Lemma~4.9]{abp}}).
Therefore, by Corollary~\ref{cor:perfectreps} we have the following
classification:

\begin{cor}\label{cor:multiloop-reps}
The map $\mathcal{E} \to \scS$, $\Psi \mapsto \ev_\Psi$, is a
bijection. In particular, all irreducible finite-dimensional
representations are evaluation representations.
\end{cor}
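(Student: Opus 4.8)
The plan is to reduce the statement to Corollary~\ref{cor:perfectreps}, for which the only thing to check is that the multiloop algebra $\M$ is perfect.

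First I would record the structural observation underlying this. Writing $\M$ in the form \eqref{eq:twisted-multiloop-diagonal} and using that $\Gamma = \langle g_1\rangle \times \cdots \times \langle g_n\rangle$ with the factors acting on disjoint coordinates, one may take $\Gamma$-invariants one variable at a time. This exhibits $\M$ as an $n$-fold iterated (twisted) loop algebra built, step by step, from the simple Lie algebra $\g$ --- precisely the setting of \cite{abp}. By \cite[Lemma~4.9]{abp}, every such iterated loop algebra of a simple Lie algebra is perfect, so $[\M,\M]=\M$.

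Second, with perfectness in hand the conclusion is immediate: since $[\M,\M]=\M$ we have $(\M/[\M,\M])^*=0$, so Corollary~\ref{cor:perfectreps} (whose hypothesis is exactly the perfectness of $\frM$) gives that $\Psi\mapsto\ev_\Psi$ is a bijection $\mathcal{E}\to\scS$; in particular every irreducible finite-dimensional representation of $\M$ is an evaluation representation. Note also that here $\Gamma$ acts freely on $X=(k^\times)^n$, so $\g^x=\g$ is simple for all $x\in X_\rat$ and $\tilde X=\emptyset$, consistent with the absence of nontrivial one-dimensional representations.

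The only genuine input is the perfectness of $\M$, which I am quoting from \cite{abp}; everything else is a formal consequence of Theorem~\ref{thm:fd-reps=eval-reps+linear-form}. The one point I would take care over is matching the ``loop algebra of a loop algebra'' description of $\M$ with the hypotheses of \cite[Lemma~4.9]{abp}, i.e. checking that at each stage of the iteration one is taking a loop algebra, with respect to an automorphism of finite order, of an algebra to which the cited lemma still applies; this is routine since the relevant class of algebras is closed under the loop construction. Alternatively, one can simply cite \cite{abp} for the fact that multiloop algebras themselves are perfect and skip the iteration entirely.
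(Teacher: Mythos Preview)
Your proposal is correct and matches the paper's own argument essentially verbatim: the paper observes that a multiloop algebra is an iterated loop algebra, cites \cite[Lemma~4.9]{abp} to conclude that $\M$ is perfect, and then invokes Corollary~\ref{cor:perfectreps}. Your additional remark that $\Gamma$ acts freely on $X$ so $\g^x=\g$ and $\tilde X=\emptyset$ is a nice consistency check but is not needed for the proof.
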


The irreducible finite-dimensional representations of an arbitrary
multiloop algebra have been discussed in \cite{Bat} and \cite{Lau}.
With Corollary \ref{cor:multiloop-reps} we recover the recent
results in \cite{Lau}, which subsume all previous classifications of
irreducible finite-dimensional representations of loop algebras. We
note that these previous classifications involved some rather
complicated algebraic conditions on points of evaluation (see, for
example, \cite[Theorem~5.7]{Lau}).  However, in the approach of
Theorem~\ref{thm:fd-reps=eval-reps+linear-form}, such conditions are
not necessary.  In fact, we see that the presence of these algebraic
conditions arises from the description of evaluation representations
in terms of individual points rather than as equivariant maps (i.e.
elements of $\mathcal{E}$).  For instance, if $\M =
M(\g,\sigma_1,\dots,\sigma_n,m_1,\dots,m_n)$
 is an arbitrary multiloop algebra and
$\Psi \in \mathcal{E}$, then $\ev_{\Psi}$ is the isomorphism class
$\ev_{\bx}(\Psi(x_i))_{i=1}^l$, where $\bx = \{x_1,\dots,x_l\} \in
X_l$ contains one element from each $\Gamma$-orbit in $\supp \Psi$
and this class is independent of the choice of $\bx$ (Definition
\ref{def:evaluation-rep}).  It is immediate that such an $\bx$ must
satisfy the condition that $x_1^m, \ldots, x_l^m$ (where $x_i^m =
(x_{i1}^{m_1}, \dots, x_{in}^{m_n})$ for $x_i=(x_{i1}, \dots,
x_{in}$)) are pairwise distinct in $(k^\times)^n$. We therefore
recover the conditions on the points $x_i$ found in
\cite[Theorem~5.7]{Lau} which are necessary and sufficient for
$\ev_{x_1, \ldots, x_l}(\otimes V_i)$ to be irreducible.  The other
conditions found there are similarly explained.

\subsection{Connections to Drinfeld polynomials}

In \cite{CPweyl}, \cite{CM}, \cite{CFS} and \cite{S}, the
isomorphism classes of irreducible finite-dimensional
representations of loop algebras $L(\g)$, $L^\sigma(\g)$ are
parameterized by certain collections of polynomials, sometimes
referred to as \emph{Drinfeld polynomials}.  Here we explain the
relationship between this parametrization and ours.  For better
comparison with the existing literature, we assume in this
subsection that $k=\mathbb{C}$.

Denote by $\calP$ the set of of $n$-tuples of polynomials with
constant term 1:
\[
  \calP = \left\{ \bpi = \left( \pi_1(u), \ldots, \pi_n(u) \right) :
  \pi_i \in \mathbb{C}\left[ u \right], \; \; \pi_i(0) = 1 \right\}.
\]
Then the set $\calP$ is in bijective correspondence with the
isomorphism classes of irreducible finite-dimensional
representations of $L(\g)$ (\cite[Proposition~2.1]{CPweyl}).  To the
element $\bpi \in \calP$ we associate an irreducible representation
$V(\bpi)$ (the construction of $V(\bpi)$ is given in \cite{CPweyl}).
We describe this correspondence.

Fix a simple finite-dimensional Lie algebra $\g$, denote by $n$ its
rank, and fix a Cartan decomposition $\g = \lie n^- \oplus \lie h
\oplus \lie n^+$ with Cartan subalgebra $\lie h = \bigoplus_{i=1}^n
\mathbb{C} h_i \subseteq \g$ and weight lattice $P  = \bigoplus_{i=1}^n
\mathbb{Z} \omega_i$, with fundamental weights $\left\{ \omega_i
\right\}_{i=1}^n$, $\omega_i(h_j) = \delta_{ij}$.

Let $\bpi = (\pi_1, \ldots, \pi_n) \in \calP$ and $\left\{ x_i
\right\}_{i=1}^l = \bigcup_{j=1}^n \left\{ z \in \mathbb{C}^\times :
\pi_j(z^{-1}) = 0 \right\}$.  Then each $\pi_j$ can be written
uniquely in the form
\[
  \pi_j(u) = \prod_{i=1}^l(1 - x_i u)^{N_{ij}}, \; \; \; N_{ij} \in \mathbb{N}.
\]
Let $\mathbf{x} = \{x_1, \ldots, x_l\}$.  For $i = 1, \ldots, l$,
define $\lambda_i \in P^+$ by $\lambda_i(h_j) = N_{ij}$, and let
$\rho_i : \g \rightarrow \End_k(V(\lambda_i))$ be the corresponding
irreducible finite-dimensional representation of $\g$.  Then
$V(\bpi)$ is isomorphic as an $L(\g)$-module to the evaluation
representation
\[ \textstyle
  \ev_{\mathbf{x}}(\rho_i)_{i=1}^l : L(\g) \stackrel{\ev_{\mathbf{x}}}{\longrightarrow}
  \g^{\oplus l} \xrightarrow{\bigotimes_{i=1}^l \rho_i}
  \End_k \left( \bigotimes_{i=1}^l V_i \right).
\]
To produce an element $\bpi \in \calP$ from an irreducible
representation $V$ of $L(\g)$, we first find an evaluation
representation $\ev_{\mathbf{x}}(\rho_i)_{i=1}^l : L(\g)
\longrightarrow \End_k( \bigotimes_{i=1}^l V(\lambda_i))$ isomorphic to
$V$ (\cite[Theorem~2.14]{R} or Corollary~\ref{cor:multiloop-reps}).
Next, for $i = 1, \ldots, l$, we define elements $ \bpi_{\lambda_i,
x_i} \in \calP$ by
\[
  \bpi_{\lambda_i, x_i} = \left( (1 - x_i u)^{\lambda_i(h_1)}, \ldots,
(1 - x_i u)^{\lambda_i(h_n)} \right)
\]
and define $\bpi  =
\prod_{i=1}^l \bpi_{\lambda_i, x_i}$, where multiplication of
$n$-tuples of polynomials occurs componentwise.

Given an element $\bpi \in \mathcal{P}$, we can uniquely decompose
$\bpi = \prod_{i=1}^l \bpi_{\lambda_i, x_i}$, $x_i \neq x_j$, and we
define
\[
  \Psi_{\bpi}
  := \left\{ x_i \mapsto \left[ V(\lambda_i) \right] \right\} \in
  \mathcal{E}.
\]
Then $V(\bpi)$ is a representative of $\ev_{\Psi_{\bpi}}$.

In \cite{CFS}, there is a similar parametrization of the irreducible
finite-dimensional representations of $L^\sigma(\g)$, where $\sigma$
is a non-trivial diagram automorphism of $\g$, but in this case the
bijective correspondence is between isomorphism classes of
irreducible finite-dimensional $L^\sigma(\g)$-modules and the set
$\mathcal{P}^\sigma$ of $m$-tuples of polynomials ${\mbox {\boldmath
$\pi$}^\sigma} = (\pi_1, \ldots, \pi_m)$, $\pi_i(0)=1$, where $m$ is
the rank of the fixed-point subalgebra $\g_0 \subseteq \g$. One
feature of this classification is the fact that every irreducible
finite-dimensional $L^\sigma(\g)$-module is the restriction of an
irreducible finite-dimensional $L(\g)$-module (see
\cite[Theorem~2]{CFS}). This fact follows immediately from
Proposition~\ref{prop:lifting} once we note that in the setup of
multiloop algebras, the action of $\Gamma$ on $X=\C^\times$ is via
multiplication by roots of unity and hence is free.  Thus $\g^{x} =
\g^{\Gamma_x} = \g^{\left\{ \Id \right\}} = \g$ for all $x \in X$.
Of course, the approach of the current paper yields an enumeration
by elements of $\mathcal{E}$.  The induced identification of
$\mathcal{E}$ with $\mathcal{P}^\sigma$ is somewhat technical and
will not be described here, but can found in \cite{Ssurvey}.

\subsection{The generalized Onsager algebra} \label{subsec:onsager}

Our results also provide a classification of the irreducible
finite-dimensional representations of the generalized Onsager
algebra $\On(\g)$ introduced in Example~\ref{eg:onsager} (in fact,
for the more general equivariant map algebra of
Example~\ref{eg:general-involution}). For $\g \ne \lsl_2$ this
classification was previously unknown.

\begin{prop}\label{prop:onsager-reps}
Let $\g$ be a simple Lie algebra, $X=\Spec k[t^{\pm 1}]$, and
$\Gamma=\{1, \sigma\}$ be a group of order two.  Suppose $\sigma$
acts on $X$ by $\sigma \cdot x = x^{-1}$, $x \in X$, and on $\g$ by
an automorphism of order two.  Then the map $\mathcal{E} \to \scS$,
$\Psi \mapsto \ev_\Psi$, is a bijection. In particular, all
irreducible finite-dimensional representations are evaluation
representations. In particular, this is true for the generalized
Onsager algebra $\On(\g)$.
\end{prop}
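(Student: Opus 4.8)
The plan is to derive this from Theorem~\ref{thm:finite-tilde-X}, so the first task is to understand $\tilde X = \{x \in X_\rat : \g^x \neq [\g^x,\g^x]\}$ in this situation. The $k$-rational points of $X = \Spec k[t^{\pm 1}]$ fixed by $\sigma$ are exactly $t = 1$ and $t = -1$, since $x = x^{-1}$ forces $x^2 = 1$; at any other point the stabilizer is trivial, so $\g^x = \g$ is simple and hence perfect. At $t = \pm 1$ we have $\g^x = \g_0 := \g^\Gamma$, which by Example~\ref{eg:general-involution} is either semisimple or has a one-dimensional center. If $\g_0$ is semisimple then it is perfect, so $\tilde X = \emptyset$, $\frM$ is perfect by \eqref{eq:derivedM}, and Corollary~\ref{cor:perfectreps} already gives the bijection $\mathcal{E} \to \scS$. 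The remaining case, which I treat below, is that $\g_0$ has a one-dimensional center $\frz$, so that $\tilde X = \{1,-1\}$ is finite and $\g_0 = \frz \oplus [\g_0,\g_0]$.

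In that case, by Theorem~\ref{thm:finite-tilde-X} it is enough to show that the map $\gamma$ of \eqref{eq:center-surjection} is injective; since the kernel of \eqref{eq:tildeX-composition} equals $\frM^d$, this amounts to the identity $[\frM,\frM] = \frM^d$, and then $(\ker\gamma)^* = 0$ and Theorem~\ref{thm:finite-tilde-X} yields that $\Psi \mapsto \ev_\Psi$ is a bijection. (Equivalently, one can combine Corollary~\ref{cor:non-ss-reps} with the injectivity already established in Proposition~\ref{prop:inj}.)

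So the real content is the identity $[\frM,\frM] = \frM^d$, which I would check by an explicit computation in $A = k[t^{\pm 1}]$. Writing $s = t + t^{-1}$, one has $A_0 = A^\Gamma = k[s]$, and an easy induction shows $A_1 = (t - t^{-1})k[s]$, hence $A_1^2 = (t-t^{-1})^2 k[s] = (s^2-4)k[s]$; that is, $A_1^2$ is the ideal of $k[s]$ consisting of the polynomials vanishing at $s = 2$ and $s = -2$ (equivalently, at $t = 1$ and $t = -1$). By \eqref{eq:derivedM}, $[\frM,\frM] = ([\g_0,\g_0]\ot A_0 + \g_0 \ot A_1^2) \oplus (\g_1 \ot A_1)$. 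To compute $\frM^d$, decompose $\alpha \in \frM = (\g_0 \ot A_0)\oplus(\g_1 \ot A_1)$ as $\alpha = \alpha_{\frz} + \alpha_{\mathrm{s}} + \alpha_1$ with $\alpha_{\frz} \in \frz \ot A_0$, $\alpha_{\mathrm{s}} \in [\g_0,\g_0]\ot A_0$, $\alpha_1 \in \g_1 \ot A_1$; since every element of $A_1$ vanishes at $t = \pm 1$, we get $\alpha(\pm 1) = \alpha_{\frz}(\pm 1) + \alpha_{\mathrm{s}}(\pm 1)$ with $\alpha_{\mathrm{s}}(\pm1) \in [\g_0,\g_0]$, so $\alpha(\pm 1) \in [\g_0,\g_0]$ iff the central part $\alpha_{\frz}$ vanishes at $t = \pm1$. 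Writing $\alpha_{\frz} = z \ot g$ for a fixed $0 \ne z \in \frz$, this says precisely $g(2) = g(-2) = 0$, i.e. $g \in A_1^2$; therefore $\frM^d = ([\g_0,\g_0]\ot A_0 + \frz \ot A_1^2)\oplus(\g_1 \ot A_1) = [\frM,\frM]$, the last equality using $\g_0 = [\g_0,\g_0]\oplus \frz$ and $A_1^2 \subseteq A_0$. The statement for $\On(\g)$ then follows since the Chevalley involution of Example~\ref{eg:onsager} has order two. The only slightly delicate point is the clean identification, inside $A_0 = k[s]$, of the vanishing ideal of the $\sigma$-fixed points with $A_1^2$; but this is immediate once one notes $s^2 - 4 = (t - t^{-1})^2$, and the rest is routine bookkeeping.
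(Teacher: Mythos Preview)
Your proof is correct and follows essentially the same route as the paper: reduce to Theorem~\ref{thm:finite-tilde-X}, dispose of the semisimple-$\g_0$ case via Corollary~\ref{cor:perfectreps}, and in the remaining case compute $A_0 = k[s]$, $A_1 = (t-t^{-1})k[s]$, $A_1^2 = (s^2-4)k[s]$. The only cosmetic difference is the final step: the paper observes that $\frM/[\frM,\frM] \cong A_0/A_1^2$ is two-dimensional and so is $\bigoplus_{x=\pm1}\frz^x$, whence the surjection $\gamma$ is an isomorphism by a dimension count; you instead compute $\frM^d$ directly and check it coincides with $[\frM,\frM]$, invoking Corollary~\ref{cor:non-ss-reps}. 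These are equivalent verifications of the same fact.
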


\begin{proof}
Recall that $\g_0 = \g^\Gamma$ is either semisimple or has a
one-dimensional center.  In the case when $\g_0$ is semisimple, the
result follows from Corollary \ref{cor:perfectreps} and
\eqref{eq:quotient-general-involution}.  We thus assume that
$Z(\g_0) \cong \g_0/[\g_0,\g_0]$ is one-dimensional.
By~\eqref{eq:quotient-general-involution}, we have $\frM/[\frM,\frM]
\cong A_0/A_1^2$, a Lie algebra with trivial Lie bracket. Now, $A_0
= k[t+t^{-1}]$ and $A_1=(t-t^{-1})A_0$. Thus, setting $z=t+t^{-1}$,
we have
\[
  A_0/A_1^2 \cong k[z]/\langle z^2-4 \rangle,
\]
which is a two-dimensional vector space.  The points 1 and $-1$ are
each $\Gamma$-fixed points, and so we must take $\mathbf{x} = \{\pm
1\}$ in \eqref{eq:center-surjection}.  Therefore $\bigoplus_{x \in
\mathbf{x}} Z(\g^x)$ is also a two-dimensional vector space, and so
the map $\gamma$ in \eqref{eq:center-surjection} is injective (since
it is surjective).  The result then follows from
Theorem~\ref{thm:finite-tilde-X}.
\end{proof}

In the special case $\g=\lsl_2$, $k = \C$, the irreducible
finite-dimensional representations of $\On(\lsl_2)$ were described
in \cite{DR} as follows. Let $\{e,h,f\}$ be an $\lsl_2$-triple and
define $X = (e+f)\otimes 1$ and $Y = e\otimes t + f \otimes t^{-1}
$. Then $\On(\lsl_2)$ is generated by $X,Y$. Furthermore, if $V$ is
an irreducible finite-dimensional representation of $\On(\lsl_2)$,
then $X$ and $Y$ are diagonalizable on $V$, and there exists an
integer $d \geq 0$ and scalars $\gamma, \gamma^* \in k$ such that
the set of distinct eigenvalues of X (resp. Y ) on $V$ is $\left\{d
- 2i + \gamma : 0 \leq i \leq d\right\}$ (resp. $\left\{d - 2i +
\gamma^* : 0 \leq i \leq d\right\}$) {\cite[Corollary~2.7]{Ha}}. The
ordered pair $(\gamma, \gamma^*)$ is called the \textit{type} of
$V$. Replacing $X, Y$ by $X - \gamma I, Y - \gamma^*I$ (in the
universal enveloping algebra $U(\On(\lsl_2))$ of $\On(\lsl_2)$) the
type becomes $(0, 0)$.

Let $\ev_{x_1}V_1, \ldots , \ev_{x_n}V_n$ denote a finite sequence
of evaluation modules for $\On(\lsl_2)$, and $V$ the evaluation
module $\ev_{x_1} V_1 \otimes \cdots  \otimes \ev_{x_n} V_n$. Any
module
 that can be obtained from $V$
by permuting the order of the factors and replacing any number of
the $x_i$'s with their multiplicative inverses will be called
\emph{equivalent} to $V$. The classification of irreducible
finite-dimensional $\On(\lsl_2)$-modules of type $(0, 0)$ is
described in \cite{DR} as follows.

\begin{prop}\mbox{}
\begin{enumerate}
\item {\cite[Theorem~6]{DR}} Every non-trivial irreducible finite-dimensional
$\On(\lsl_2)$-module of type $(0, 0)$ is isomorphic to a tensor
product of evaluation modules.
\item {\cite[Proposition~5]{DR}} Let $\ev_{x_1} V_1, \ldots,
\ev_{x_n} V_n$ denote a finite sequence of evaluation modules for
$\On(\lsl_2)$, and consider the $\On(\lsl_2)$-module $\ev_{x_1} V_1
\otimes \cdots  \otimes \ev_{x_n} V_n$. This module is irreducible
if and only if $x_1, x_1\inv,
 \ldots,  x_n, x_n\inv$ are pairwise distinct.
\item {\cite[Proposition~5]{DR}} Let $U$ and $V$ denote tensor
products of finitely many evaluation modules for $\On(\lsl_2)$.
Assume each of $U$, $V$ is irreducible as an $\On(\lsl_2)$-module.
Then the $\On(\lsl_2)$-modules $U$ and $V$ are isomorphic if and
only if they are equivalent.
\end{enumerate}
\end{prop}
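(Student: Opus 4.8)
The plan is to obtain all three statements as consequences of Proposition~\ref{prop:onsager-reps}, by making explicit the combinatorial bookkeeping it provides in the case of $\On(\lsl_2)$. First I would set up the dictionary. For $\On(\lsl_2)$ the group $\Gamma=\{1,\sigma\}$ acts on $X=k^\times$ with three kinds of orbits: $\{1\}$, $\{-1\}$, and the free orbits $\{x,x^{-1}\}$ with $x\neq\pm1$; one has $\g^{\pm1}=\g_0$, which is one-dimensional abelian, so $\tilde X=\{\pm1\}$ is finite, while $\g^x=\g=\lsl_2$ for $x\neq\pm1$. Since every automorphism of $\lsl_2$ is inner, $W\circ\sigma\cong W$ for every finite-dimensional $\g$-module $W$, so Lemma~\ref{lem:twisted-eval-invariance} gives $\ev_x W\cong\ev_{x^{-1}}W$; consequently an element of $\mathcal{E}$ is the same datum as a choice of two scalars $\gamma_{\pm1}\in k$ (one-dimensional representations of $\g_0$) together with a finitely supported assignment of irreducible $\lsl_2$-modules to the free orbits. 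By Proposition~\ref{prop:onsager-reps}, $\Psi\mapsto\ev_\Psi$ is a bijection $\mathcal{E}\to\scS$, and by Lemma~\ref{lem:one-dim-evals} (together with Proposition~\ref{prop:one-dim-eval-reps}, which identifies the one-dimensional evaluation representations with $(\frz^1\oplus\frz^{-1})^*\cong k^2$) the one-dimensional representations are precisely the $\ev_\Psi$ with $\supp\Psi\subseteq\{\pm1\}$.

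Next I would match [DR]'s notion of \emph{type}. Tensoring $\ev_\Psi$ by the one-dimensional representation attached to some $(\gamma_1,\gamma_{-1})\in k^2$ yields $\ev_{\Psi\otimes\Phi}$ with $\supp\Phi\subseteq\{\pm1\}$, and on the one-dimensional spaces $\g^{\pm1}$ this changes the datum $\Psi$ unless $\Phi$ is trivial; so this twisting action of $k^2$ on $\scS$ is free, with the $\ev_\Psi$ for which $\supp\Psi\cap\{\pm1\}=\emptyset$ forming a full set of orbit representatives. Since [DR] shows that the type is a $k^2$-valued invariant on which twisting acts by translation, type $(0,0)$ modules are exactly one such orbit representative per orbit; to see it is the one I have singled out, it suffices to check that $\ev_\Psi$ with $\supp\Psi$ avoiding $\{\pm1\}$ really has type $(0,0)$. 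This is the short computation that, in the defining $2$-dimensional representation, [DR]'s distinguished elements $X$ and $Y$ act on each irreducible $\lsl_2$-module $V(d)$ with spectrum symmetric about $0$ (their $2$-dimensional images are each semisimple with opposite eigenvalues), so that the tensor product has $X$- and $Y$-spectra symmetric about $0$. Hence $\ev_\Psi$ has type $(0,0)$ if and only if $\supp\Psi\cap\{\pm1\}=\emptyset$, and every isomorphism class is a unique twist of a type-$(0,0)$ class.

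With the dictionary in hand the three statements are short. For (1): a nontrivial irreducible finite-dimensional module of type $(0,0)$ is $\ev_\Psi$ for a unique nonzero $\Psi$ with $\supp\Psi$ disjoint from $\{\pm1\}$; choosing $\bx\in X_n$ with one point of each $\Gamma$-orbit in $\supp\Psi$, each such $x$ has $\g^x=\g$ and $\Psi(x)=[V_x]$ for an irreducible $\lsl_2$-module $V_x$, so $\ev_\Psi\cong\bigotimes_{x\in\bx}\ev_x V_x$ by \eqref{eq:tensor-prod-evals}. For the ``if'' half of (2): if $x_1,x_1^{-1},\dots,x_n,x_n^{-1}$ are pairwise distinct then the $x_i$ lie in distinct free $\Gamma$-orbits, so $\bx=\{x_1,\dots,x_n\}\in X_n$ with every $\g^{x_i}=\g$, and $\ev_\bx(\rho_x)_{x\in\bx}$ is irreducible by Proposition~\ref{prop:eval-irred}. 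For the ``only if'' half I would argue contrapositively, after discarding any trivial factors so that all $V_i$ are nontrivial (dimension $\ge2$): if some $x_i=\pm1$ then $\ev_{x_i}V_i$ is already reducible because $\g^{x_i}=\g_0$ is one-dimensional abelian; and if $x_i\in\Gamma\cdot x_j$ for some $i\neq j$ with $x_i\neq\pm1$, then Lemma~\ref{lem:twisted-eval-invariance} rewrites the $x_j$-factor as a factor at $x_i$, so combining gives the factor $V_i\otimes(V_j\circ\sigma)$, a reducible $\lsl_2$-module. In either case the whole module is of the form $\ev_{\bx'}(\rho'_x)_{x\in\bx'}$ with $\bx'\in X_m$ and some $\rho'_x$ reducible, hence reducible by Corollary~\ref{cor:twisted-surjectivity} and Proposition~\ref{prop:basic-facts}\eqref{item:tensor-prods}. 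Finally (3) follows from (2) and injectivity: writing $U=\ev_\Psi$ and $V=\ev_{\Psi'}$ where $\Psi,\Psi'$ send $\Gamma\cdot x_i\mapsto[V_i]$, Proposition~\ref{prop:inj} gives $U\cong V$ iff $\Psi=\Psi'$, and since $\ev_x W\cong\ev_{x^{-1}}W$ this holds iff the defining data of $U$ and $V$ agree up to permuting factors and replacing points by their inverses, i.e.\ iff $U$ and $V$ are equivalent in [DR]'s sense.

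The representation theory is entirely carried by the general results; the only genuine work is the translation in the first two paragraphs. The step I expect to be the main obstacle is pinning down that ``type $(0,0)$'' coincides with ``$\supp\Psi$ disjoint from $\{\pm1\}$'', since that is the one place where one must unwind [DR]'s definition of type in terms of eigenvalues of their specific generators $X$ and $Y$ and reconcile it with the intrinsic $\mathcal{E}$-parameterization (possibly passing between sign conventions via Lemma~\ref{lem:conjugate-actions}).
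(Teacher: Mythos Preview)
Your proposal is correct and follows exactly the route the paper takes: the paper's own argument is the single sentence ``These results are immediate consequences of Proposition~\ref{prop:onsager-reps},'' together with the identification that type $(0,0)$ corresponds to $\supp\Psi\cap\{\pm1\}=\emptyset$, and you have simply supplied the details of that derivation. Your identification of the type-matching step as the only substantive verification is also what the paper singles out in its discussion following the proposition.
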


These results are immediate consequences of
Proposition~\ref{prop:onsager-reps}.  The type $(0,0)$
representations are precisely the evaluation representations
$\ev_\Psi$ such that $\left\{ \pm 1 \right\} \cap \supp \Psi =
\emptyset$.  We note that under the previous definition of
evaluation representations appearing in the literature (see
Remark~\ref{rem:new-eval-rep}), \emph{only} type $(0,0)$
representations are evaluation representations. However, using
Definition~\ref{def:evaluation-rep}, \emph{all} irreducible
finite-dimensional representations (of arbitrary type) are
evaluation representations. The key is that we allow one-dimensional
representations of $\g^x$ for $x \in \{\pm 1\}$, where $\g^x$ is
one-dimensional. Thus our definition allows for a more uniform
description of the representations.  The condition that the points
$x_1, x_1^{-1}, \dots, x_n, x_n^{-1}$ be pairwise distinct also
follows automatically as in the case of multiloop algebras.

\subsection{A non-abelian example}
\label{subsec:applications-nonabelian}

Let $\g = \mathfrak{so}_8$, $X=\mathbb{P}^1\setminus \{0,1,\infty\}$
and $\Gamma=S_3$ as in Example~\ref{eg:nonabelian}, and let $\frM =
M(X,\g)^\Gamma$. Since $\frM$ is perfect, by
Corollary~\ref{cor:perfectreps} all irreducible finite-dimensional
representations of $\frM$ are evaluation representations and these
are naturally enumerated by $\mathcal{E}$. We identify $\Gamma=S_3$
with the permutations of the set $\{0,1,\infty\}$ and use the usual
cycle notation for permutations. For instance, $(0\, \infty)$
denotes the permutation given by $0 \mapsto \infty$, $\infty \mapsto
0$, $1 \mapsto 1$.  A straightforward computation shows that the
points with non-trivial stabilizer are listed in the table below
(note that $\{0,1,\infty\} \not \in X$). Each $\g^x$ is the fixed
point algebra of a diagram automorphism of $\g$ and thus a simple
Lie algebra of type $B_3$ or $G_2$.

\medskip
\begin{center}
\begin{tabular}{|c|c|c|} \hline
  $x$ & $\Gamma_x$ & Type of $\g^x$ \\ \hline
  $-1$ & $\{\Id, (0\, \infty)\} \cong \Z_2$ & $B_3$ \\
  2 & $\{\Id, (1\, \infty)\} \cong \Z_2$ & $B_3$ \\
  $\frac{1}{2}$ & $\{\Id, (0\, 1)\} \cong \Z_2$ &  $B_3$ \\
  $e^{\pm \pi i/3}$ & $\{\Id, (0\, 1\, \infty), (0\,
  \infty\, 1)\} \cong \Z_3$ & $G_2$ \\ \hline
\end{tabular}
\end{center}
\medskip

Furthermore, the sets $\{-1,2,\frac{1}{2}\}$ and $\{e^{\pi i/3},
e^{-\pi i/3}\}$ are $\Gamma$-orbits.  We thus see that the
representation theory of $\frM$ is quite rich.  Elements of
$\mathcal{E}$ can assign to the three-element orbit (the isomorphism
class of) any irreducible representation of the simple Lie algebra
of type $B_3$, to the two-element orbit any irreducible
representation of the simple Lie algebra of type $G_2$ and to any of
the other (six-element) orbits, any irreducible representation of
the simple Lie algebra $\g$ of type $D_4$.

\section*{Acknowledgements}
The authors thank Michael Lau for having explained the results of
a preprint of \cite{Lau} to them before it was posted, and Oliver
Schiffmann and Mark Haiman for helpful discussions concerning the
geometry of algebraic varieties.  They would also like to thank
Geordie Williamson, Ben Webster and Daniel Juteau for useful
conversations.


\providecommand{\bysame}{\leavevmode\hbox
to3em{\hrulefill}\thinspace}
\providecommand{\MR}{\relax\ifhmode\unskip\space\fi MR }
\providecommand{\MRhref}[2]{%
  \href{http://www.ams.org/mathscinet-getitem?mr=#1}{#2}
} \providecommand{\href}[2]{#2}

\end{document}